\def\sideremark#1{\ifvmode\leavevmode\fi\vadjust{\vbox to0pt{\vss
 \hbox to 0pt{\hskip\hsize\hskip1em
 \vbox{\hsize3cm\tiny\raggedright\pretolerance10000
 \noindent #1\hfill}\hss}\vbox to8pt{\vfil}\vss}}}
\newtheorem{theorem}{Theorem}[section]
\newtheorem{proposition}[theorem]{Proposition}
\newtheorem{lemma}[theorem]{Lemma}
\newtheorem{corollary}[theorem]{Corollary}
 \theoremstyle{definition}
 \newtheorem{definition}[theorem]{Definition}
 \theoremstyle{remark}
 \newtheorem{remark}[theorem]{Remark}
\numberwithin{equation}{section}
\renewcommand\subsubsection{\@secnumfont}{\bfseries}%
\renewcommand\subsubsection{\@startsection{subsubsection}{3}
\z@{.5\linespacing\@plus.7\linespacing}{-.5em}%
{\normalfont\bfseries}}
\newcommand{\Ric}{\text{Ric}}
\newcommand{\Tr}{\text{tr}}
\newcommand{\Ch}{\text{Ch}}
\newcommand{\C}{\mathcal{C}}
\newcommand{\Sec}{\text{Sec}}
\begin{document}

\title[Holography and Cheeger constant]{Holography and Cheeger constant of asymptotically CMC submanifolds}
\author{Samuel P\'erez-Ayala}
\address{Samuel P\'erez-Ayala\\370 Lancaster Ave \\Haverford College  \\ Haverford\\PA 19041\\ USA}
\email{sperezayal@haverford.edu}
\author{Aaron J. Tyrrell}
\address{Aaron J. Tyrrell\\18A Department of Mathematics and Statistics \\ Texas Tech University \\ Lubbock \\ TX 79409 \\ USA}
\email{aatyrrel@ttu.edu}

\keywords{Asymptotically hyperbolic, weakly Poincar\'e--Einstein, Cheeger constant, holography}
\subjclass[2020]{}
\begin{abstract}
Let $(M^{n+1},g_+)$ be an asymptotically hyperbolic manifold. We compute the Cheeger constant of conformally compact asymptotically constant mean curvature submanifolds $ \iota : Y^{k+1} \to (M^{n+1},g_+)$ with arbitrary codimension. As an application, we provide two classes of examples of $(n+1)$-dimensional asymptotically hyperbolic manifolds with Cheeger constant equal to $n$, whose conformal infinity is of the following types: 1) positive Yamabe invariant, and 2) negative Yamabe invariant. Moreover, in the same spirit as Blitz--Gover--Waldron \cite{BlitzSamuel2021CFFa}, we show that an asymptotically hyperbolic manifold with umbilic boundary is conformally weakly Poincar\'e--Einstein if and only if the third conformal fundamental form of the boundary vanishes. Next, in the space of asymptotically minimal hypersurfaces $Y$ within a Poincar\'e--Einstein manifold, we identify an extrinsic conformal invariant of $\partial Y$ which obstructs the vanishing of the mean curvature of $Y$ to second order. This conformal invariant is a linear combination of two Riemannian hypersurface invariants of $\partial Y,$ one which depends on its extrinsic geometry within $\overline{Y}$ and the other on its extrinsic geometry within $\partial M;$ neither of which are conformal invariants individually. Finally, we show that for asymptotically minimal hypersurfaces with mean curvature vanishing to second order inside of a Poincar\'e--Einstein space, being weakly Poincar\'e--Einstein is equivalent to the boundary of $Y$ having vanishing second and third conformal fundamental forms when viewed as a hypersurface within the conformal infinity.
\end{abstract}

\maketitle

\section{Introduction}

Let $\overline{M}^{n+1}$ be an $(n+1)$-dimensional compact smooth Riemannian manifold with nonempty boundary $\Sigma^n = \partial M$ and interior $M$. We say that a complete and noncompact manifold $(M^{n+1}, g_+)$ is conformally compact (CC) if $\overline g := r^2g_+$ extends to a smooth metric on $\overline{M}$, where $r$ is a defining function for the boundary $\Sigma$. By a defining function for $\Sigma$ we mean that 
\[r>0 \;\text{ in }\; M^{n+1},\; r^{-1}(0)=\Sigma\; \text{ and }\; dr\not = 0\; \text{ on }\; \Sigma.\] More specifically, we say that $(M^{n+1}, g_+)$ is $C^{m,\alpha}$ conformally compact, $\alpha\in(0,1)$, $m\in \mathbb{N}$, if $\bar g = r^2g_+$ extends to a $C^{m,\alpha}$ metric on $\overline{M}^{n+1}$. Throughout this paper, we will always work under the assumption that $g_+$ is a $C^{m,\alpha}$ CC metric with $m\ge 3$ and $\alpha\in(0,1)$. The same regularity, at least $C^{3,\alpha}$, will be required when dealing with submanifolds.

Given a conformally compact manifold $(M^{n+1}, g_+)$, if the sectional curvatures $\Sec(p)$ approach $-1$ as $p$ approaches the boundary $\Sigma$, then we say that $(M^{n+1},g_+)$ is asymptotically hyperbolic (AH). Interestingly, as observed by Mazzeo in \cite{mazzeo1986hodge}, 
conformally transforming the metric $g_+ = r^{-2}\overline g$ gives
\begin{equation}\label{TransLawRiemann}
    (R_{g_+})_{ijkl} = -|dr|^2_{\overline g}[(g_+)_{ik}(g_+)_{jl} - (g_+)_{il}(g_+)_{jk}] + O_{ijkl}(r^{-3}),
\end{equation}
therefore for an AH manifold we necessarily have  $|dr|^2_{\overline g} = 1$ on the boundary. Notice that, even for just conformally compact manifolds, (\ref{TransLawRiemann}) implies that the sectional curvatures of $(M^{n+1},g_+)$ approach $\left(-|dr|^2_{\overline g}\right)|_{\Sigma}$ at the boundary, meaning that the function $\left(-|dr|^2_{\overline g}\right)|_{\Sigma}$ is an invariant of $g_+$. If a conformally compact manifold satisfies $\Ric_{g_+} = -ng_+$, then we call $(M^{n+1},g_+)$ a Poincar\'e--Einstein (PE) manifold\footnote{Poincar\'e--Einstein manifolds are also know in literature as conformally compact Einstein (CCE) manifolds or asymptotically hyperbolic Einstein (AHE) manifolds.}, and it can be shown by contracting (\ref{TransLawRiemann}) that any PE manifold is also AH \cite{Graham}. In summary, we have:
\[
\{\text{PE}\}\subset \{\text{AH}\} \subset \{\text{CC}\}.
\]

In the context of conformally compact manifolds, the conformal class $[g_+]_\infty:=[\overline g|_{T\Sigma}]$ is also an invariant of $g_+$ \cite{Graham}, and it is called the conformal infinity of $(M^{n+1},g_+)$. To illustrate some of the terminology we have introduced, we briefly discuss an important example. The model case for a Poincar\'e--Einstein manifold is $\mathbb{H}^{n+1}(-1)$, that is, $(\mathbb{B}^{n+1}, g_H)$ where $g_H=4(1-|x|^2)^{-2}g_{\mathbb{R}^{n+1}}$. The functions $r_1,r_2:\mathbb{B}^{n+1}\rightarrow \mathbb{R}^{n+1}$, defined as $r_1(x) = 1 - |x|$ and $r_2(x) = (1-|x|)(1+|x|)^{-1}$, where $|\cdot|$ denotes the standard euclidean norm, are examples of defining functions for $\partial \mathbb{B}^{n+1} = \mathbb{S}^n$ with both $r_1^2g_H$ and $r_2^2g_H$ extending to metrics on the closure of $\mathbb{B}^{n+1}$. However, and as discussed in \cite{Graham}, only $r_2$ is an example of a special defining function, that is, $|dr_2|^2_{r_2^2g_H}\equiv 1$ on a neighborhood of the boundary.  Moreover, $[g_H]_\infty = [(r_2^2g_H)|_{T^2\mathbb{S}^n}] = [(\text{standard metric $g_{round}$ on $\mathbb{S}^n$})]$, hence $(\mathbb{S}^n,[g_{round}])$ is the conformal infinity of $(\mathbb{H}^{n+1},g_H)$. Other examples can be found in the work of Chang--Qing--Yang \cite{ChangSun-YungA.2007SPiC} (see Section 2.2).

Recall that, for an AH manifold, we necessarily have $\left(|dr|^2_{\overline g}\right)|_\Sigma \equiv 1$ for any defining function $r$ of the boundary $\Sigma$. It is a result of Graham--Lee \cite{GrahamLee} that for any metric $\hat g\in [g_+]_\infty$ in the conformal infinity of an AH manifold $(M^{n+1},g_+)$, there is a special defining function $r$ for the boundary $\Sigma$ with $|dr|^2_{r^2g_+}\equiv 1$ in a collar neighborhood of $\Sigma$ and $(r^2g_+)|_{T\Sigma} = \hat g$. Such a defining function is uniquely determined in a neighborhood of the boundary. After making an identification, the metric can be expressed in normal form as \begin{equation}\label{NormalForm} \overline g = r^2 g_+ = dr^2 + g_r, \end{equation} where $g_r$ is a one-parameter family of metrics on $\Sigma$ with $g_0 = \hat g$. As we will see, the expansion of $g_r$ at $r = 0$ contains significant information about the Riemannian geometry of the space and the conformal geometry of the boundary, and it is the main focus of this work to explore this expansion.

\subsection{Discussion of Main Results}

An important invariant of a complete and non-compact manifold $(M^{n+1},g_+)$ is the first $p$-Dirchlet eigenvalue ($1<p<\infty$). It is defined as
\begin{equation}\label{First-pDirichlet}
\lambda_{1,p}(M):=\inf_{\Omega} \lambda_{1,p}(\Omega) = \inf_{\Omega} \inf_{f\in W^{1,p}_o(\Omega)\setminus \{0\}} \frac{\displaystyle\int_\Omega |\nabla_{g_+}f|^p\;dv_{g_+}}{\displaystyle\int_\Omega |f|^p\;dv_{g_+}},
\end{equation}
where the infimum is taken over all bounded domains $\Omega$ in $M$ with smooth boundary. Notice that $\lambda_{1,p}(M)$ depends on $g_+$, so a more proper notation would be $\lambda_{1,p}(M,g_+)$. However, the metric $g_+$ will always be clear from the context. 

In \cite{pérezayala-tyrrell}, the authors extended a key consequence of Mazzeo's results in  \cite{MazzeoRafe1991UCaI} and showed that for any AH manifold $(M^{n+1},g_+)$,
\begin{equation}
\lambda_{1,p}(M)\le \left(\frac{n}{p}\right)^p.
\end{equation}
This upper bound is sharp, as it is achieved on $\mathbb{H}^{n+1}(-1)$. In particular, $\lambda_{1,2}(M) \leq (\frac{n}{2})^2$. In his remarkable work \cite{LeeJohnM.1995Tsoa}, Lee proved that if $(M^{n+1}, g_+)$ is a Poincar\'e--Einstein manifold whose conformal infinity has non-negative Yamabe type, then equality holds: $\lambda_{1,2}(M) = (\frac{n}{2})^2$. However, his result is not optimal in the sense that there exist Poincaré--Einstein manifolds with $\lambda_{1,2}(M) = (\frac{n}{2})^2$ where the conformal infinity has a negative Yamabe invariant. This led Lee to pose the following two types of questions, both of which appear in the introduction of his work \cite{LeeJohnM.1995Tsoa}: 

\begin{enumerate}\label{LeeQuestions}
    \item Given a PE manifold $(M^{n+1},g_+)$, can we find necessary and sufficient conditions on the conformal infinity $(\Sigma, [g_+]_\infty)$ for $\lambda_{1,2}(M)=\frac{n^2}{4}$?
    \item Given a PE manifold $(M^{n+1},g_+)$, can we find necessary and sufficient conditions on $g_+$ for the conformal infinity to be of non-negative Yamabe type? 
\end{enumerate}

To our knowledge, to this day question (1) remains open. The situation is different for question (2). We would like to mention two results in this direction. First, Guillarmou--Qing \cite{GuillarmouColin2010SCoP}  answered question (2) by proving that, when $n+1>3$, if $g_+$ is Poincar\'e--Einstein, then the first real scattering pole is less than $\frac{n}{2}+1$ if and only if $(\Sigma,[g_+]_\infty)$ has positive Yamabe invariant (see Theorem 1.1 in \cite{GuillarmouColin2010SCoP}). As they pointed out, their result still holds if $\Ric_{g_+}=- ng_+$ is replaced by the weaker assumption that $\Ric_{g_+}\ge -ng_+$ and the metric $g_r$ has expansion
\begin{equation}\label{WE-expansion}
g_r = g_{(0)} + g_{(1)}\cdot r+ g_{(2)}\cdot r^2 + O(r^3)= \hat g - P_{\hat g}\cdot r^2 + O(r^3),
\end{equation}
where $r$ is any special defining function, $\hat g = (r^2g_+)|_{T\Sigma}$ and 
\begin{equation}\label{SchoutenTensor}
P_{\hat g} = \frac{1}{n-2}\left(\Ric_{\hat g} - \frac{R_{\hat g}}{2(n-1)}\hat g\right)
\end{equation}
is the Schouten tensor with respect to $\hat g$; see Remark 1.3 in \cite{GuillarmouColin2010SCoP}. Asymptotically hyperbolic manifolds satisfying (\ref{WE-expansion}) are known as Weakly Poincar\'e--Einstein (WPE) manifolds. This is because, in the case of actual Poincar\'e--Einstein manifolds, the expansion of the metric $g_r$ is even, up to $n-1$ if $n$ is odd and up to $n$ if $n$ is even (Lemma 4 in \cite{ChangSun-YungA.2007SPiC}), and the first two terms are exactly as in (\ref{WE-expansion}) (see section 2 in \cite{Graham}).

The second result that we would like to remark addressing Lee's second question is that of Hijazi--Montiel--Raulot in \cite{HijaziOussama2020TCCo}. The three authors established that, when $n+1\ge 3$, if 
\begin{equation}\label{DecayCurvatureConditions}
\Ric_{g_+}+ng_+\ge 0\quad\text{ and }\quad R_{g_+}+n(n+1) = o(r^2),
\end{equation}
then the Cheeger constant $\Ch(M,g_+)$ (see (\ref{CheegerConstantDef}) for definition) is $n$ if and only if the Yamabe invariant of the conformal infinity is non-negative; see Theorem 6 in \cite{HijaziOussama2020TCCo}. A natural question is how the results of Guillarmou--Qing and Hijazi--Montiel--Raulot relate to each other. This is the motivation behind the following result. 
\begin{theorem}\label{WPE+RiciffScDecayCond}
 Assume $n+1> 3$. Let $(M^{n+1},g_+)$ be an AH manifold that satisfies $\Ric_{g_+} + ng_+\ge 0$. Then $g_{(1)} = 0$ and $g_{(2)}=-P_{\hat g}$, that is, $g_+$ is WPE if and only if $R_{g_+}+n(n+1) = o(r^2)$, where $r$ is any special defining function and $P_{\hat g}$ is the Schouten tensor with respect to $(r^2g_+)|_{T\Sigma} = \hat g$.
\end{theorem}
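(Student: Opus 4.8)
The plan is to read off both conditions from the normal-form expansion of $\overline g = dr^2 + g_r$ and to use the curvature lower bound to upgrade a scalar (trace) statement into a full tensor statement. By Graham--Lee I fix the special defining function $r$ associated to a representative $\hat g = g_{(0)}$ of the conformal infinity, so that $g_r = \hat g + g_{(1)}\,r + g_{(2)}\,r^2 + O(r^3)$ with $|dr|^2_{\overline g}\equiv 1$ near $\Sigma$; since $m\ge 3$ this Taylor expansion is legitimate. I would record two consequences of the conformal change $g_+ = r^{-2}\overline g$. First, writing $H = \tfrac12\tr_{g_r}(\partial_r g_r) = \partial_r\log\sqrt{\det g_r}$ for the mean curvature of the level sets in $\overline g$, the scalar-curvature transformation law (with $\phi=-\log r$) yields the clean identity
\[ R_{g_+} + n(n+1) = r^2\,R_{\overline g} + 2n\,r\,H. \]
Second, the Ricci transformation law, applied to $\phi=-\log r$ (for which $\nabla^2\phi - d\phi\otimes d\phi$ has only the tangential components $-\tfrac{1}{2r}\partial_r g_r$), gives
\[ (\Ric_{g_+}+n g_+)_{ab} = (\Ric_{\overline g})_{ab} + \frac{n-1}{2r}(\partial_r g_r)_{ab} + \frac{H}{r}(g_r)_{ab}, \qquad (\Ric_{g_+}+n g_+)_{rr} = (\Ric_{\overline g})_{rr} + \frac{H}{r}. \]

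For the forward implication I substitute $g_{(1)}=0$ and $g_{(2)}=-P_{\hat g}$ into the scalar identity. Expanding $H = \tfrac12(a_0 + a_1 r) + O(r^2)$ with $a_0 = \tr_{\hat g}g_{(1)}$ and $a_1 = 2\tr_{\hat g}g_{(2)} - |g_{(1)}|^2_{\hat g}$, one finds $R_{g_+}+n(n+1) = n a_0\,r + (R_{\overline g}|_\Sigma + n a_1)\,r^2 + O(r^3)$. Here $a_0 = 0$ and $a_1 = 2\tr_{\hat g}(-P_{\hat g}) = -R_{\hat g}/(n-1)$. The coefficient of $r^2$ is evaluated using the twice-contracted Gauss equation (the boundary is totally geodesic in $\overline g$ since $g_{(1)}=0$), namely $R_{\overline g}|_\Sigma = R_{\hat g} + 2(\Ric_{\overline g})_{rr}|_\Sigma$, together with the radial Riccati identity $(\Ric_{\overline g})_{rr} = -\partial_r H - |S|^2$, which at $\Sigma$ (where the shape operator $S$ vanishes because $g_{(1)}=0$) gives $(\Ric_{\overline g})_{rr}|_\Sigma = -\tfrac12 a_1$. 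These yield $R_{\overline g}|_\Sigma = n R_{\hat g}/(n-1)$ and hence $R_{\overline g}|_\Sigma + n a_1 = 0$, so $R_{g_+}+n(n+1) = O(r^3) = o(r^2)$. This direction is purely computational and uses only the regularity, not the Ricci bound.

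For the reverse implication the bound $\Ric_{g_+}+n g_+\ge 0$ is essential, and its role is to convert smallness of a trace into smallness of the whole tensor. The tensor $T := \Ric_{g_+}+n g_+$ is nonnegative, and its $g_+$-trace is exactly $R_{g_+}+n(n+1) = o(r^2)$; since a nonnegative symmetric form is dominated in norm by its trace, $|T|_{g_+} \le \tr_{g_+}T = o(r^2)$, and because $(g_+)^{ij} = r^2\overline g^{ij}$ this means $|T|_{\overline g} = o(1)$, i.e. every coordinate component $T_{ij}\to 0$ at $\Sigma$ (the $rr$- and mixed components then vanish automatically, carrying only Codazzi-type data, so the content sits in $T_{ab}$). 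Feeding this into the tangential formula, the only term singular as $r\to 0$ is $\tfrac{1}{r}\big(\tfrac{n-1}{2}(g_{(1)})_{ab} + \tfrac{a_0}{2}(\hat g)_{ab}\big)$, so its vanishing forces $g_{(1)}$ to be pure trace; taking the $\hat g$-trace gives $a_0 = 0$ and hence $g_{(1)}=0$. With $g_{(1)}=0$ the finite part of $T_{ab}\to 0$ reads $(\Ric_{\overline g})_{ab}|_\Sigma + (n-1)(g_{(2)})_{ab} + \tfrac12 a_1(\hat g)_{ab} = 0$, and since $\partial_r g_r|_\Sigma = 0$ the tangential Ricci collapses to $(\Ric_{\overline g})_{ab}|_\Sigma = (\Ric_{\hat g})_{ab} - (g_{(2)})_{ab}$. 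This produces the linear relation $(\Ric_{\hat g})_{ab} + (n-2)(g_{(2)})_{ab} + \tfrac12 a_1(\hat g)_{ab} = 0$; its trace fixes $a_1 = -R_{\hat g}/(n-1)$, and substituting back gives exactly $g_{(2)} = -\tfrac{1}{n-2}\big(\Ric_{\hat g} - \tfrac{R_{\hat g}}{2(n-1)}\hat g\big) = -P_{\hat g}$. The hypothesis $n+1>3$, i.e. $n-2>0$, is what makes this final solve-for-$g_{(2)}$ step, and the Schouten tensor itself, well defined.

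The main obstacle I anticipate is twofold. Computationally, one must nail the two curvature transformation laws and the tangential Ricci expansion of $dr^2+g_r$ with the correct coefficients, which is routine but error-prone. Conceptually, the crucial step is promoting $\tr_{g_+}T = o(r^2)$ to $T_{ij}\to 0$ through nonnegativity: this is precisely where $\Ric_{g_+}+n g_+\ge 0$ enters, and without it the scalar decay would only control the traces $a_0$ and $a_1$, never the full tensors $g_{(1)}$ and $g_{(2)}$. I would also check carefully that the $C^{m,\alpha}$ ($m\ge 3$) regularity justifies differentiating the expansion to the orders used and that the remainders are genuinely $o(r^2)$ and $O(r^3)$ rather than merely bounded.
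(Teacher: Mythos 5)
Your proof is correct, and while your forward implication is essentially the paper's own computation, your reverse implication takes a genuinely different route. For the forward direction, both arguments reduce to showing $(n-1)R_{\overline g}|_\Sigma = nR_{\hat g}$ under the WPE hypothesis and substituting into the expansion of $R_{g_+}+n(n+1)=r^2R_{\overline g}-2nr\overline H_r$; that you obtain $(\Ric_{\overline g})_{rr}|_\Sigma$ from the radial Riccati identity rather than from the identity $g_{(2)}=-\overline R_{0i0j}$ plus the Gauss equation is only cosmetic. For the converse, the paper argues in stages: the scalar decay forces $\hat H=0$, the sign of the $rr$-component of $\Ric_{g_+}+ng_+$ forces $\hat L=0$, and then $g_{(2)}=-P_{\hat g}$ is extracted by decomposing $\overline R_{0i0j}$ into Weyl and Schouten parts, comparing $\overline P_{ij}$ with $\hat P_{ij}$ via the Fialkow--Gauss equation, and killing $\overline W_{0i0j}$ by the observation that it has a sign yet is trace-free. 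You replace all of this machinery with the single pointwise inequality $|T|_{g_+}\le \tr_{g_+}T$ for the nonnegative tensor $T=\Ric_{g_+}+ng_+$, which under the rescaling $|T|_{\overline g}=r^{-2}|T|_{g_+}$ upgrades the hypothesis $\tr_{g_+}T=o(r^2)$ to full componentwise decay $T_{\alpha\beta}=o(1)$ in holographic coordinates; then $g_{(1)}=0$ and $g_{(2)}=-P_{\hat g}$ drop out of the linear equations given by the $r^{-1}$ and $r^0$ coefficients of $T_{ab}$ (your identity $(\Ric_{\overline g})_{ab}|_\Sigma=\hat R_{ab}-(g_{(2)})_{ab}$ is exactly the paper's $g_{(2)}=-\overline R_{0a0b}$ combined with the Gauss equation, so that step is shared). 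What your route buys is brevity and transparency: no Weyl tensor, no Fialkow--Gauss equation, and a clear demonstration that the Ricci bound kills the entire tensor at the boundary rather than merely pinning a sign; it also recovers, by applying the same trick to $-T$, the paper's observation that the inequality $\Ric_{g_+}+ng_+\le 0$ works as well. What the paper's route buys is the intermediate fact $\overline W_{0i0j}=0$, i.e.\ the vanishing of the third conformal fundamental form, which is precisely the invariant reused in its characterization theorem for WPE metrics and which ties the result to the Blitz--Gover--Waldron framework. Two minor points: your parenthetical that the $rr$- and mixed components ``vanish automatically'' is loose (they decay by the same trace argument; they are simply not needed), and you should state that $o(r^2)$ is meant uniformly over $\Sigma$ so that matching coefficients along each normal ray is legitimate.
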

\noindent In summary, for this class of AH metrics $g_+$, having a first real scattering pole less than $\frac{n}{2}+1$ is equivalent to $(\Sigma,[g_+]_\infty)$ being of positive Yamabe type, and any of these $g_+$ have a Cheeger constant equal to $n$. When $n=2$, being WPE does not make sense since $P_{\hat{g}}$ is undefined and $g_{(2)}$ is not locally determined by the Einstein condition \cite{Graham}, but we can still consider the condition on the scalar curvature appearing in  (\ref{DecayCurvatureConditions}).

An important observation from the proof of Theorem \ref{WPE+RiciffScDecayCond} is that for an AH manifold with $\Ric_{g_+}+ng_+\ge 0$ and whose boundary is totally geodesic under compactification by a special defining function $r$, being WPE implies the vanishing of $\overline W_{0i0j}$ at the boundary. Here, $\overline W_{0i0j}$ denotes the components of the Weyl tensor with respect to $\overline g = r^2g_+$ in holographic coordinates - see the beginning of Section \ref{Preliminaries}. The quantity $\overline{W}_{0i0j}$ restricted to the boundary is called the third conformal fundamental form of $\Sigma$, with the second conformal fundamental form being the traceless second fundamental form of $\Sigma$. This terminology is borrowed  from the work of Blitz–Gover–Waldron \cite{BlitzSamuel2021CFFa}. In their work, Blitz–Gover–Waldron study a sequence of conformally invariant, higher-order fundamental forms to determine when a CC manifold is conformally equivalent to an asymptotically Poincaré--Einstein space (see Theorem 1.8 in \cite{BlitzSamuel2021CFFa}).\footnote{We must emphasize that the third conformal fundamental form is an extrinsic conformal hypersurface invariant, and its definition is independent of the choice of coordinate system; see \cite{BlitzSamuel2021CFFa}.} 

Recall that a manifold is said to be WPE if the complete metric $g_+$ exhibits the same asymptotic expansion as that of a PE metric, at least up to, and including, order 2; see (\ref{WE-expansion}). This is particularly relevant to our discussion, as we expect WPE manifolds, and their natural generalizations, to share many properties with PE manifolds. Consequently, understanding the conditions that lead to being WPE is crucial. As we will demonstrate, there exist large classes of AH manifolds that do not satisfy the condition $\Ric_{g_+}+ng_+\ge0$. This observation provides the motivation for the following result.

\begin{theorem}\label{WPE-Characterization}
     Let $(M^{n+1}, g_+)$ be an AH manifold, and let $\overline g = r^2g_+$ be a compactification of it by a special defining function $r$. The following holds:
    \begin{enumerate}
        \item $(n+1\ge 2)$ $g_{(1)} = 0$ if and only if $\Ric_{g_+}+ng_+ = O(1)$ if and only if $R_{g_+}+n(n+1) = o(r)$ and $\Sigma$ is umbilic in $(\overline{M}^{n+1}, \overline g)$.
        \item $(n+1>2)$ $g_{(1)} = 0$ and $g_{(2)} = -P_{\hat g}$, that is, $g_+$ is WPE if and only if $\Ric_{g_+}+ng_+ = O(r)$ if and only if $R_{g_+}+n(n+1)=o(r^2)$, $\Sigma$ is umbilic and $\overline{W}_{0i0j}$ vanishes on $\Sigma$.
    \end{enumerate}
\end{theorem}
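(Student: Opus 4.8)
The plan is to reduce every condition in the statement to the coefficients $g_{(1)},g_{(2)}$ of the normal-form expansion $g_r = \hat g + g_{(1)}r + g_{(2)}r^2 + O(r^3)$ via a single master identity for the conformal change $g_+ = r^{-2}\overline g$. Since $r$ is a special defining function, $|dr|^2_{\overline g}\equiv 1$ in a collar, and the conformal transformation law for the Ricci tensor collapses to
\begin{equation*}
\Ric_{g_+} + ng_+ = \Ric_{\overline g} + (n-1)\,r^{-1}\,\overline\nabla^2 r + r^{-1}(\overline\Delta r)\,\overline g.
\end{equation*}
In the normal form $\overline g = dr^2 + g_r$ the Hessian and Laplacian of $r$ are purely tangential: $(\overline\nabla^2 r)_{ij} = \tfrac12\partial_r(g_r)_{ij}$, $(\overline\nabla^2 r)_{0a}=0$, and $\overline\Delta r = \tfrac12\partial_r\log\det g_r$. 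First I would record the standard warped-product expressions for $\Ric_{\overline g}$ in the splitting $(0,i)$, then substitute the expansion of $g_r$ to read off the Laurent expansion of each block of $\Ric_{g_+}+ng_+$ in powers of $r$.

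For Part (1), the only possibly singular contribution is the $r^{-1}$ term of the tangential block, with coefficient $\tfrac12[(n-1)g_{(1)} + (\tr_{\hat g}g_{(1)})\hat g]$; the $00$-block contributes $\tfrac{1}{2r}\tr_{\hat g}g_{(1)}$, while the $0i$-block is already $O(1)$. Setting the $r^{-1}$ terms to zero and tracing forces $\tr_{\hat g}g_{(1)}=0$ and then $g_{(1)}=0$, giving $g_{(1)}=0 \Leftrightarrow \Ric_{g_+}+ng_+ = O(1)$. For the third characterization I would contract the master identity: since $\tr_{g_+} = r^2\tr_{\overline g}$, one finds $\Sc_{g_+}+n(n+1) = r^2\Sc_{\overline g} + 2nr\,\overline\Delta r = nr\,\tr_{\hat g}g_{(1)} + O(r^2)$, so the $o(r)$ condition is exactly $\tr_{\hat g}g_{(1)}=0$; since umbilicity of $\Sigma$ is precisely the vanishing of the trace-free part of $g_{(1)}$ (the second fundamental form being $\pm\tfrac12 g_{(1)}$), the two together again give $g_{(1)}=0$.

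For Part (2), the first two conditions in the third characterization already force $g_{(1)}=0$ by Part (1), so throughout I may assume $g_{(1)}=0$ and push the expansions one order further. With $g_{(1)}=0$ the $00$- and $0i$-blocks of $\Ric_{g_+}+ng_+$ are automatically $O(r)$ (their constant terms cancel), and the tangential block has constant term $\Ric_{\hat g} + (n-2)g_{(2)} + (\tr_{\hat g}g_{(2)})\hat g$; setting this to zero and solving yields exactly $g_{(2)} = -P_{\hat g}$, which is the equivalence $\Ric_{g_+}+ng_+=O(r) \Leftrightarrow \text{WPE}$. For the scalar-curvature characterization I would compute, using $\overline\Ric_{00}|_\Sigma = -\tr_{\hat g}g_{(2)}$ and the traced Gauss equation, that $\Sc_{g_+}+n(n+1) = r^2\big[\Sc_{\hat g} + 2(n-1)\tr_{\hat g}g_{(2)}\big] + O(r^3)$; hence $o(r^2)$ is equivalent to $\tr_{\hat g}(g_{(2)}+P_{\hat g})=0$, i.e.\ the vanishing of the \emph{trace} of $g_{(2)}+P_{\hat g}$.

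The crux -- and the step I expect to be the main obstacle -- is to show that the remaining condition $\oW_{0i0j}|_\Sigma = 0$ captures exactly the \emph{trace-free} part of $g_{(2)}+P_{\hat g}$. For this I would expand the Weyl component $\oW_{0i0j} = \overline R_{0i0j} - \overline P_{00}\,\hat g_{ij} - \overline P_{ij}$ at $r=0$ with $g_{(1)}=0$, using $\overline R_{0i0j}|_\Sigma = -(g_{(2)})_{ij}$ together with the values of $\overline\Ric_{00},\overline\Ric_{ij},\Sc_{\overline g}$ on $\Sigma$ to evaluate the $(n+1)$-dimensional Schouten tensor $\overline P$. A careful bookkeeping of the $\Ric_{\hat g}$, $g_{(2)}$ and $\hat g$-trace terms should collapse the result to $\oW_{0i0j} = -\tfrac{n-2}{n-1}\big(g_{(2)}+P_{\hat g}\big)_{ij}^{\mathrm{tf}}$, the trace-free part; the vanishing of the pure-trace coefficient is forced by the trace-free property of the Weyl tensor and serves as a consistency check. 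Combining the trace condition from the scalar curvature with the trace-free condition from $\oW_{0i0j}$ then yields $g_{(2)}=-P_{\hat g}$, completing the equivalence. The delicate point throughout is keeping the conformal weights straight and distinguishing the $n$-dimensional Schouten tensor $P_{\hat g}$ of $\hat g$ from the $(n+1)$-dimensional Schouten tensor $\overline P$ of $\overline g$, since the final identity hinges on the exact numerical coefficients.
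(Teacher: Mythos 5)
Your proposal is correct, and it shares the paper's general framework (the holographic expansion of $g_r$ combined with the conformal transformation laws for Ricci and scalar curvature -- your ``master identity'' is precisely the paper's transformation law, and your Part (1) coincides with the paper's proof after translating $g_{(1)}=-2\hat L$, $\tr_{\hat g}g_{(1)}=-2\hat H$). Where you genuinely diverge is the crux of Part (2). The paper never solves for $g_{(2)}$ directly: it first proves WPE $\Rightarrow R_{g_+}+n(n+1)=o(r^2)$ (Proposition \ref{WPEImply}), uses the resulting relation $n\hat R=(n-1)R_{\overline g}|_\Sigma$ to rewrite the constant term of $(\Ric_{g_+}+ng_+)_{ij}$ as $-(n-1)\overline W_{0i0j}$, and then kills $\overline W_{0i0j}$ via the Fialkow--Gauss equation from the appendix through the self-referential identity $\overline W_{0i0j}=\hat P_{ij}-\overline P_{ij}=-\tfrac{1}{n-2}\overline W_{0i0j}$; the remaining implications likewise route through Fialkow--Gauss. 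You instead (i) solve the tensor equation $\Ric_{\hat g}+(n-2)g_{(2)}+(\tr_{\hat g}g_{(2)})\hat g=0$ directly, which after tracing gives $\tr_{\hat g}g_{(2)}=-\tfrac{\hat R}{2(n-1)}$ and hence $g_{(2)}=-P_{\hat g}$ -- a correct and cleaner proof of WPE $\Leftrightarrow \Ric_{g_+}+ng_+=O(r)$ that bypasses the Weyl tensor entirely -- and (ii) establish the identity $\overline W_{0i0j}|_\Sigma=-\tfrac{n-2}{n-1}\bigl(g_{(2)}+P_{\hat g}\bigr)^{\mathrm{tf}}_{ij}$, which checks out exactly (the pure-trace terms assemble to $\tfrac{n-2}{n(n-1)}\tr_{\hat g}(g_{(2)}+P_{\hat g})\,\hat g_{ij}$, as required by trace-freeness of Weyl). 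This identity packages the scalar-decay condition as the trace of $g_{(2)}+P_{\hat g}$ and the Weyl condition as its trace-free part, so the third equivalence falls out immediately. It is equivalent in content to the paper's combination of the Weyl decomposition with Fialkow--Gauss for a totally geodesic hypersurface, but your version is self-contained and makes the trace/trace-free bookkeeping transparent, whereas the paper's route produces Fialkow--Gauss as a general-codimension tool that it reuses elsewhere (e.g.\ in Theorem \ref{WPE-Submanifolds}).

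One point to tighten: both you and the paper need the mixed components $(\Ric_{g_+}+ng_+)_{0i}=\overline R_{0i}$ to be $O(r)$ in order to assert $\Ric_{g_+}+ng_+=O(r)$, and neither spells this out. Your parenthetical ``their constant terms cancel'' should be justified by the contracted Codazzi equation, $\overline R_{0i}|_{r=0}=(\operatorname{div}_{\hat g}\hat L)_i-\partial_i\hat H$, which indeed vanishes once $g_{(1)}=0$; with that one line added, your argument is complete.
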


We finish the current subsection with a discussion of yet another type of AH manifold $(M^{n+1},g_+)$, which is more general than Poincar\'e--Einstein. Instead of requiring the complete metric to be PE, that is, to satisfy $\Ric_{g_+}+ng_+ = 0$, we could require the weaker condition $R_{g_+}+n(n+1) = 0$. Complete metrics satisfying $R_{g_+}+n(n+1)=0$ are known as singular Yamabe (SY) metrics, while we refer to metrics satisfying $R_{g_+}+n(n+1) = O(r^\alpha)$, for some $\alpha>2$, as asymptotically singular Yamabe (ASY) metrics. If we start with a CC manifold $(M^{n+1},g_+)$ and $\overline g$ is a smooth compactification of it, then there exists a unique defining function $u_{sy}$ for $\Sigma$ such that $g_{sy} := u_{sy}^{-2}\overline g$ is a singular Yamabe metric; see introduction of \cite{GrahamCRobin2019Cffs} and references therein. More generally, we could always find a defining function $u$ for $\Sigma$ such that $g_{asy}:=u^{-2}\overline g$ satisfies $R_{g_{asy}} +n(n+1)=O(r^{n+2}\log r)$, where $r$ is any special defining function \cite{GrahamC.Robin2017Vrfs}. Notice that both of these metrics satisfy the scalar curvature decay condition in (\ref{DecayCurvatureConditions}), and also the corresponding conformal infinities of $g_+$, $g_{sy}$ and $g_{asy}$ remain the same. As a consequence of Theorem \ref{WPE-Characterization}, we derive
\begin{corollary}\label{WPE-CharacterizationCor}
    Let $(M^{n+1}, g_+)$ be an asymptotically singular Yamabe metric, and let $\overline g$ be a compactification of it by a special defining function $r$. Then
    \begin{enumerate}
        \item $g_{(1)} = 0$ if and only if $\Ric_{g_{+}}+ng_{+} = O(1)$ if and only if $\Sigma$ is umbilic in $(\overline{M}^{n+1}, \overline g)$.
        \item $g_{(1)} = 0$ and $g_{(2)} = -P_{\hat g}$, that is, $g_{+}$ is WPE if and only if $\Ric_{g_{+}}+ng_{+} = O(r)$ if and only if $\Sigma$ is umbilic and $\overline{W}_{0i0j}$ vanishes on $\Sigma$.
    \end{enumerate}
\end{corollary}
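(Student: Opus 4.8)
The plan is to obtain the corollary as a direct specialization of Theorem \ref{WPE-Characterization}, the point being that the scalar-curvature decay hypotheses appearing there are automatically met by any ASY metric. First I would record that, by definition, an asymptotically singular Yamabe metric satisfies $R_{g_+}+n(n+1) = O(r^\alpha)$ for some $\alpha > 2$, where $r$ is a special defining function; the existence of such an $r$ presupposes that $g_+$ is AH, which is exactly the hypothesis needed to invoke Theorem \ref{WPE-Characterization} and to write $\overline g = r^2 g_+$ in the normal form \eqref{NormalForm}. Since $\alpha > 2 > 1$, the ASY bound immediately yields both
\[
R_{g_+}+n(n+1) = o(r) \qquad \text{and} \qquad R_{g_+}+n(n+1) = o(r^2),
\]
so the two scalar-curvature clauses that enter the last equivalence in parts (1) and (2) of Theorem \ref{WPE-Characterization} hold with nothing left to verify.

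With this in hand, part (1) of the corollary follows by reading off the first item of Theorem \ref{WPE-Characterization}: its chain of equivalences is $g_{(1)} = 0 \iff \Ric_{g_+}+ng_+ = O(1) \iff \big(R_{g_+}+n(n+1) = o(r) \text{ and } \Sigma \text{ umbilic}\big)$, and since the scalar-curvature clause is automatic the third condition collapses to ``$\Sigma$ is umbilic in $(\overline{M}^{n+1},\overline g)$''. Part (2) follows identically from the second item: there the third condition reads ``$R_{g_+}+n(n+1)=o(r^2)$, $\Sigma$ umbilic, and $\overline{W}_{0i0j}$ vanishes on $\Sigma$'', and discarding the automatic scalar-curvature clause leaves precisely ``$\Sigma$ umbilic and $\overline{W}_{0i0j}$ vanishes on $\Sigma$''. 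These are verbatim the two statements of the corollary.

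There is no substantive obstacle here, as all of the analytic work is carried by Theorem \ref{WPE-Characterization}; the only points requiring care are the elementary inclusions $O(r^\alpha)\subseteq o(r^2)\subseteq o(r)$ for $\alpha>2$, and the bookkeeping remark that passing from $g_+$ to its (asymptotically) singular Yamabe compactification leaves the conformal infinity $(\Sigma,[g_+]_\infty)$ unchanged, so that the umbilicity of $\Sigma$ and the vanishing of $\overline{W}_{0i0j}$ are referred to the correct boundary data.
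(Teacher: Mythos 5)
Your proposal is correct and is exactly the derivation the paper intends: since an ASY metric satisfies $R_{g_+}+n(n+1)=O(r^\alpha)$ with $\alpha>2$, the scalar-curvature clauses $o(r)$ and $o(r^2)$ in Theorem \ref{WPE-Characterization} hold automatically (and, as the paper notes, this decay also guarantees $g_+$ is AH), so the corollary is just that theorem with those clauses discarded. No gaps, and your handling of the elementary inclusions $O(r^\alpha)\subseteq o(r^2)\subseteq o(r)$ matches the paper's implicit argument.
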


It immediately follows from Corollary \ref{WPE-CharacterizationCor}, and the preceding discussion, that a conformally compact manifold with umbilic boundary is conformally weakly Poincar\'e--Einstein if and only if the third conformal fundamental form of the boundary vanishes. Theorem \ref{WPE-Characterization}, and its Corollary \ref{WPE-CharacterizationCor}, are analogues of Theorem 1.8 in \cite{BlitzSamuel2021CFFa}, where the condition WPE is used in place of asymptotically Poincar\'e--Einstein. 

A quick calculation using (\ref{TransLawRiemann}) shows that any conformally compact manifold that satisfies $R_{g_+}+n(n+1) = o(r^2)$ is an asymptotically hyperbolic manifold. Therefore, inside the collection of Conformally Compact manifolds with $n+1>3$, 
\[
\{\text{PE}\}\subset \{R_{g_+}+n(n+1) = o(r^2)\text{ and }\Ric+ng\ge0\} = \{\text{WPE and }\Ric+ng\ge0\}\subset \{\text{AH}\},
\]
where the equality is thanks to Theorem \ref{WPE+RiciffScDecayCond}. Notice that SY metrics are a particular case of CC metrics $g_+$ with $R_{g_+} + n(n+1) = o(r^2)$, but they might not satisfy the Ricci condition. It is our interest to continue exploring Lee's questions \ref{LeeQuestions} in the more general context of AH manifolds that fail to satisfy the expansion in (\ref{WE-expansion}), the lower bound on the Ricci tensor, or both.

\subsubsection{Cheeger Constants of Asymptotically CMC Submanifolds}

In previous work, the authors introduced the notion of asymptotically CMC  submanifolds; see Definition 1.6 in \cite{pérezayala-tyrrell}. These are immersed submanifolds $\iota: Y^{k+1}\to (M^{n+1},g_+)$ which are conformally compact themselves, meet $\partial M$ transversely at  $\partial Y^k\subset \partial M = \Sigma^n.$ Furthermore the mean curvature $H$ satisfies
\begin{equation}\label{MeanCurvature-A-CMC}
g_+(H,H) = C^2+O(r)
\end{equation}
for a defining function $r$ - see Figure \ref{AH-Sub}. Since $r=O(\rho)$ for any pair $r$ and $\rho$ of defining functions for $\Sigma$, asymptotically CMC is well-defined. In this context, we refer to the ambient CC manifold $(M^{n+1},g_+)$ as the bulk manifold. Also, as mentioned, we will assume these submanifolds are $C^{m,\alpha}$ for $m\geq 3, \alpha\in (0,1).$

It turns out that if the bulk manifold $(M^{n+1},g_+)$ is AH, then the angle at which $ Y$ meets $\Sigma$ at infinity is constant and determined by $C$. Furthermore, $Y^{k+1}$ is asymptotically hyperbolic itself with asymptotic sectional curvatures equal to $(-1)(1-C^2(k+1)^{-1})$ - see Proposition 1.8 and its proof in \cite{pérezayala-tyrrell}. In particular, asymptotically minimal submanifolds (i.e. $C=0$) are AH themselves. These are generally not weakly Poincar\'e--Einstein and do not satisfy $\Ric_{h_+}+kh_+\ge0$. In fact, and as we will see, many  satisfy $\Ric_{h_+}+kh_+\le 0$. Therefore, the class of asymptotically minimal submanifolds provides a good source of interesting examples of AH spaces, especially when situated inside hyperbolic space, as they inherit some of the properties of the bulk manifold. 

\begin{figure}[h]
    \centering
    \includegraphics[scale=0.12]{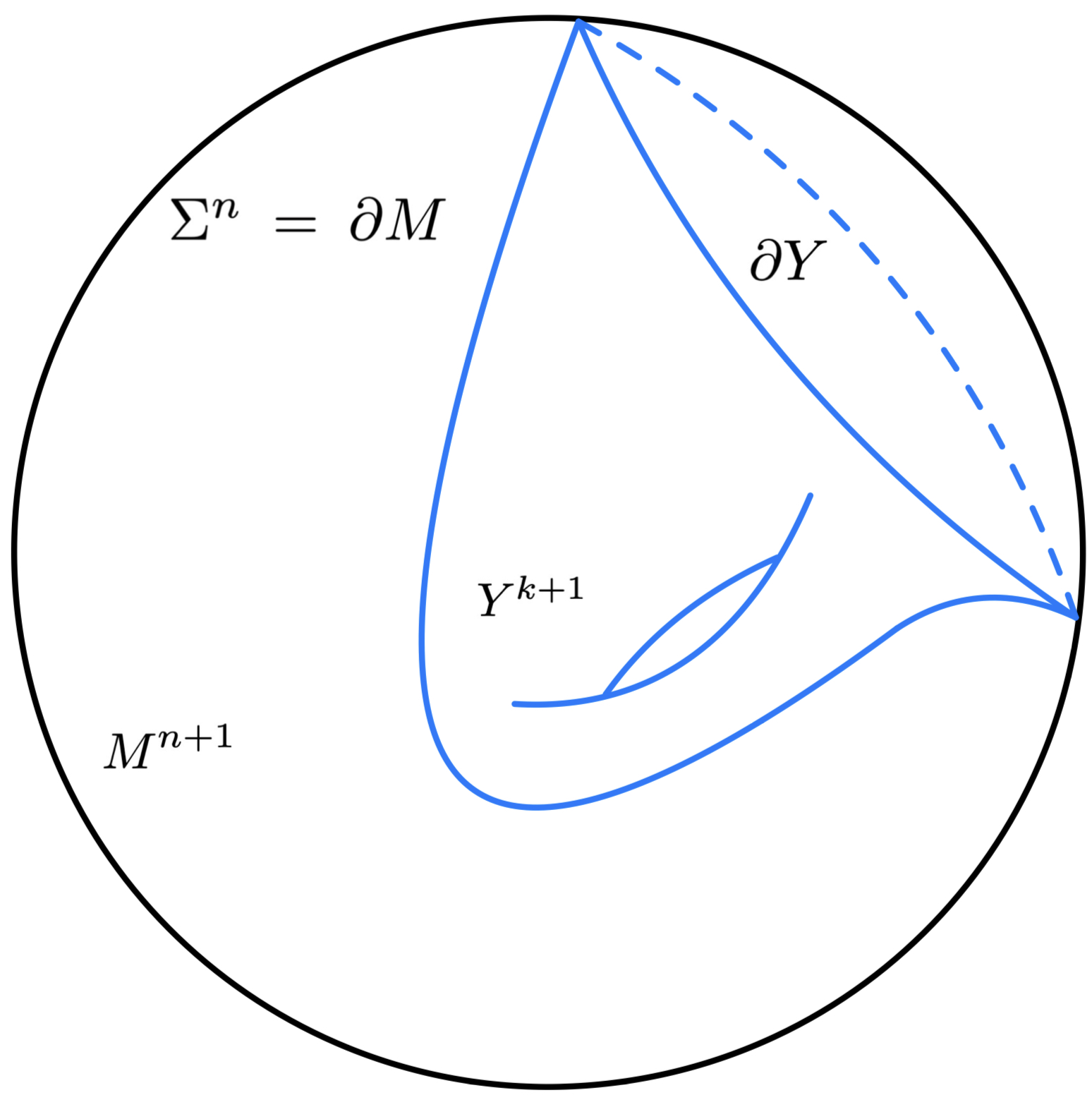}
    \caption{A conformally compact submanifold $Y^{k+1}$ in an AH space $M^{n+1}$.}
    \label{AH-Sub}
\end{figure}

The Cheeger constant $\Ch(M, g_+)$ is defined as the isoperimetric ratio
\begin{equation}\label{CheegerConstantDef}
\Ch(M, g_+) = \inf_{\Omega} \frac{A(\partial \Omega)}{V(\Omega)},
\end{equation}
where the infimum is taken over all compact smooth domains in $M^{n+1}$, and both geometric quantities are computed with respect to the induced metrics coming from $g_+$. When referring to the Cheeger constant of an asymptotically CMC submanifold $Y^{k+1}$, the infimum is taken over compact smooth domains within $Y^{k+1}$, and all quantities are computed with respect to the induced metric $h_+$. We are now in position to state the following:
\begin{theorem}\label{CHCONSTANT-UpperBound}
    Let $Y^{k+1}$ be a conformally compact submanifold inside an AH space $(M^{n+1},g_+)$ which has asymptotically constant mean curvature $C$. Then 
    \[
    \Ch(Y^{k+1})\le k\left(1-\frac{C^2}{(k+1)^2}\right)^{\frac{1}{2}}.
    \]
    In particular, if $C=0$ (i.e. $Y$ is asymptotically minimal), then 
    \[
    \Ch(Y^{k+1})\le k.
    \]
\end{theorem}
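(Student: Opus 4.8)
Since $\Ch$ is defined as an infimum over all compact smooth domains, any family of test domains yields an upper bound, so I only need to exhibit domains whose isoperimetric ratio converges to the claimed value. The plan is to fix a representative of the conformal infinity of $Y$, take the special defining function $r$ for $\Sigma$ in $(\overline M,\overline g)$ furnished by Graham--Lee, and restrict it to the submanifold. Writing $\rho := r\circ\iota$ and $\overline h := \iota^{*}\overline g = \rho^{2}h_{+}$, the function $\rho$ is a defining function for $\partial Y$ in the compact manifold $(\overline Y,\overline h)$ and $h_{+}=\rho^{-2}\overline h$ realizes $(Y,h_{+})$ as conformally compact. I would then take the exhaustion $\Omega_{\epsilon}:=\{\rho>\epsilon\}$, which for a regular value $\epsilon$ is a compact smooth domain in $Y$ with $\partial\Omega_{\epsilon}=\{\rho=\epsilon\}$, and estimate $A(\partial\Omega_{\epsilon})/V(\Omega_{\epsilon})$ as $\epsilon\to0$.

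For the computation I would transfer everything to the compactified metric $\overline h$. Because $h_{+}=\rho^{-2}\overline h$ on the $(k+1)$-dimensional $Y$, the volume form obeys $dv_{h_{+}}=\rho^{-(k+1)}dv_{\overline h}$, and the induced area form on the level set $\{\rho=\epsilon\}$ obeys $dA_{h_{+}}=\epsilon^{-k}dA_{\overline h}$; hence $A(\partial\Omega_{\epsilon})=\epsilon^{-k}A_{\overline h}(\{\rho=\epsilon\})$. The coarea formula with respect to $\overline h$ gives
\[
V(\Omega_{\epsilon})=\int_{\epsilon}^{\rho_{0}}\!\left(\int_{\{\rho=t\}}\frac{t^{-(k+1)}}{|d\rho|_{\overline h}}\,dA_{\overline h}\right)dt+O(1),
\]
where the $O(1)$ accounts for the bounded contribution of the compact core $\{\rho\ge\rho_{0}\}$. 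As $\epsilon\to0$ one has $A_{\overline h}(\{\rho=\epsilon\})\to\mathrm{Vol}_{\overline h}(\partial Y)$ and the inner integral converges to $\mathrm{Vol}_{\overline h}(\partial Y)\big/\bigl(|d\rho|_{\overline h}\big|_{\partial Y}\bigr)$, so the $t^{-(k+1)}$ singularity integrates to $\epsilon^{-k}/k$ to leading order. Dividing, all volume factors and powers of $\epsilon$ cancel and the ratio tends to $k$ times the boundary value of $|d\rho|_{\overline h}$. The whole problem therefore reduces to computing $|d\rho|^{2}_{\overline h}$ along $\partial Y$.

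This boundary value is the genuine geometric content, and I would extract it from the asymptotically CMC hypothesis. Since $r$ is special, $\nabla^{\overline g}r$ is the $\overline g$-unit normal to the level sets of $r$, and its tangential projection onto $T\overline Y$ is precisely $\nabla^{\overline h}\rho$; consequently $|d\rho|^{2}_{\overline h}=1-|(\nabla^{\overline g}r)^{\perp}|^{2}_{\overline g}$ along $\partial Y$, where $\perp$ denotes the component normal to $\overline Y$. Transforming the mean curvature vector of the $(k+1)$-dimensional $Y$ under $g_{+}=r^{-2}\overline g$ and keeping the leading term near $\partial Y$ yields
\[
g_{+}(H,H)=(k+1)^{2}\,\bigl|(\nabla^{\overline g}r)^{\perp}\bigr|^{2}_{\overline g}+O(r);
\]
comparing with $g_{+}(H,H)=C^{2}+O(r)$ forces $|(\nabla^{\overline g}r)^{\perp}|^{2}_{\overline g}\big|_{\partial Y}=C^{2}/(k+1)^{2}$, and hence $|d\rho|^{2}_{\overline h}\big|_{\partial Y}=1-C^{2}/(k+1)^{2}$. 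Combining this with the limit from the previous step gives
\[
\Ch(Y^{k+1})\le\lim_{\epsilon\to0}\frac{A(\partial\Omega_{\epsilon})}{V(\Omega_{\epsilon})}=k\sqrt{1-\frac{C^{2}}{(k+1)^{2}}},
\]
and $C=0$ is the stated special case. As an internal check, the transformation law \eqref{TransLawRiemann} applied to $h_{+}=\rho^{-2}\overline h$ identifies the intrinsic asymptotic sectional curvature of $Y$ with $-|d\rho|^{2}_{\overline h}\big|_{\partial Y}$, which is consistent with the Gauss equation computed from the asymptotically pure-trace second fundamental form of $Y$.

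The step I expect to be the main obstacle is not the leading-order bookkeeping but the rigorous control of error terms at the available $C^{m,\alpha}$ regularity. Because $Y$ is only asymptotically CMC, the function $|d\rho|_{\overline h}$ is not exactly constant on the slices $\{\rho=t\}$, and one must show that its $O(t)$ deviation, together with the $O(t)$ variation of $A_{\overline h}(\{\rho=t\})$, perturbs the coarea integral only at orders subleading to $\epsilon^{-k}$; otherwise the limiting ratio would merely be bounded above by a larger constant. Establishing uniform estimates for $|d\rho|_{\overline h}$ and for the slice areas on a collar $\{0<\rho<\rho_{0}\}$, which should follow from the smoothness of $\overline h$ up to $\partial Y$ together with the asymptotic expansion of the immersion, is the technical heart that must be handled with care.
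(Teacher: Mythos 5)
Your proof is correct, but it follows a genuinely different route from the paper's. The paper's proof is a two-line spectral argument: it quotes the eigenvalue upper bound $\lambda_{1,p}(Y^{k+1})\le (k/p)^p\bigl(1-C^2/(k+1)^2\bigr)^{p/2}$ from Corollary 1.9 of \cite{pérezayala-tyrrell}, combines it with Takeuchi's Cheeger-type inequality $\lambda_{1,p}(Y^{k+1})\ge \bigl(\Ch(Y^{k+1})/p\bigr)^p$ from \cite{Takeuchi}, and divides; all of the isoperimetric and asymptotic-geometric content is outsourced to those two references. You instead argue directly from the definition (\ref{CheegerConstantDef}): you exhibit the explicit test domains $\Omega_\epsilon=\{\rho>\epsilon\}$, run the coarea formula for the compactified metric $\overline h=\rho^2h_+$ to see that the isoperimetric ratio tends to $k\,a$, where $a$ is the (constant) boundary value of $|d\rho|_{\overline h}$, and then use the conformal transformation law for the mean curvature vector to compute $a=\sqrt{1-C^2/(k+1)^2}$ --- precisely the boundary-angle computation that, in the paper's route, is hidden inside Proposition 1.8 of \cite{pérezayala-tyrrell}. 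What each buys: the paper's argument is shorter given the existing literature; yours is self-contained, avoids the $p$-Laplacian machinery altogether, and produces explicit near-optimal domains, which is more informative since it shows the bound is approached by collar complements. Two remarks on your final paragraph: the error analysis you flag as the ``technical heart'' is in fact routine, since only $o(1)$ (not $O(t)$) control of the slice areas and of $|d\rho|_{\overline h}$ is needed, and both are continuous up to $\partial Y$ at the $C^{3,\alpha}$ regularity the paper assumes; and for the division by $a$ you should note $a>0$, which holds because $a=0$ at a point of $\partial Y$ would force $T\overline Y\subset T\Sigma$ there, contradicting the transversality required of conformally compact submanifolds (equivalently, this is why $C<k+1$ automatically).
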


Before we introduce our next result, the following definition is needed: 
\begin{definition}
    Let $(M^{n+1},g_+)$ be an AH space. We call a smooth, positive solution $u$ of 
    \begin{equation}\label{LeeEq}
    \begin{cases}
        \Delta_{g_+}u = (n+1)u \quad \text{ on }M^{n+1}\\ u - r^{-1} = O(1) \quad \text{ as }r\to0^+
    \end{cases}
    \end{equation}
    which also satisfies $|\nabla_{g_+}u|^2_{g_+} \le u^2$ on $M^{n+1}$ a Lee-eigenfunction. 
\end{definition}
\noindent On an AH manifold, given any smooth defining function $r$, there exists a unique smooth and positive solution $u$ to (\ref{LeeEq}) -- see Proposition 4.1 in \cite{LeeJohnM.1995Tsoa}. This solution may not necessarily satisfy the gradient estimate. However, if the manifold is PE and the conformal infinity is of nonnegative Yamabe type, then there exists a solution $u$ for which the gradient estimate holds (Theorem A in \cite{LeeJohnM.1995Tsoa}) and we have the existence of a Lee-eigenfunction. As pointed out by Guillarmou--Qing in \cite{GuillarmouColin2010SCoP} (Remark 1.3), a Lee-eigenfunction still exists if PE is replaced by $\Ric_{g_+}+ng_+\ge 0$ and $(M^{n+1},g_+)$ is WPE.

Theorem \ref{CHCONSTANT-UpperBound} reflects the fact that upper bounds on Cheeger constants are only influenced by the behavior of the AH manifold at infinity. Lower bounds, however, rely on global information.

\begin{theorem}\label{LowerBoundCheegerConstant-Submanifold}
Let $Y^{k+1}\to M^{n+1}$ be a complete, non-compact immersion into a WPE manifold $(M^{n+1},g_+)$ with $\Ric_{g_+}+ng_+\ge 0$ and whose conformal infinity $(\Sigma,[g_+]_\infty)$ has non-negative Yamabe invariant, and let $u$ be a Lee-eigenfunction. Denote by $b(u) = \nabla^2_{g_+}u - ug_+$ the trace-free hessian of $u$. If the mean curvature vector of $Y$ has norm satisfying $\|H^Y\|\le \alpha$ for some constant $\alpha$, and if 
\[
\beta^Y(u):=\underset{Y}{\sup} \left(u^{-1}\cdot tr\left(b(u)|_{(TY^{\perp}){}^2}\right)\right)
\] 
satisfies $\beta^Y(u)+\alpha<k$, then it follows that
\[
\Ch(Y^{k+1})\ge k-\beta^Y(u)-\alpha>0.
\]
Hence,
\[
\Ch(Y^{k+1})\ge k-\hat \beta^Y-\alpha>0, 
\]
where $\hat \beta^Y$ is defined as the infimum of $\beta^Y(u)$ over all Lee-eigenfunctions. 
\end{theorem}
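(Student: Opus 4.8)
The plan is to establish the bound by the standard divergence (calibration) method for Cheeger constants: if one can produce a vector field $X$ on $Y$ with $|X|_{h_+}\le 1$ everywhere and $\operatorname{div}_{h_+}(X)\ge c$ for a constant $c>0$, then integrating over any compact domain $\Omega\subset Y$ and applying the divergence theorem gives $c\,V(\Omega)\le\int_{\partial\Omega}\langle X,\nu\rangle\,dA\le A(\partial\Omega)$, whence $\Ch(Y)\ge c$. The natural candidate, given a Lee-eigenfunction $u$, is the tangential field $X=\nabla^Y u/u=\nabla^Y(\log u)$, where $\nabla^Y$ denotes the gradient of $u|_Y$ with respect to $h_+$. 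Since $\nabla^Y u$ is the $g_+$-orthogonal projection of $\nabla_{g_+}u$ onto $TY$, the gradient estimate $|\nabla_{g_+}u|^2_{g_+}\le u^2$ immediately yields $|X|_{h_+}=|\nabla^Y u|_{h_+}/u\le 1$, so $X$ is an admissible calibration (globally smooth, since $u>0$).

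The core computation is a pointwise lower bound on $\operatorname{div}_{h_+}(X)$. First I would expand, by the product rule,
\[
\operatorname{div}_{h_+}\!\left(\frac{\nabla^Y u}{u}\right)=\frac{\Delta_Y u}{u}-\frac{|\nabla^Y u|^2_{h_+}}{u^2},
\]
and then use the relation between the intrinsic Laplacian of $u|_Y$ and the ambient Hessian,
\[
\Delta_Y u=\tr_{h_+}\!\big(\nabla^2_{g_+}u|_{TY}\big)+\langle\nabla_{g_+}u,\,H^Y\rangle.
\]
Substituting the identity $\nabla^2_{g_+}u=u\,g_++b(u)$ and using that $b(u)$ is trace-free on all of $TM$, so that $\tr\big(b(u)|_{TY}\big)=-\tr\big(b(u)|_{(TY^\perp)^2}\big)$, gives
\[
\frac{\Delta_Y u}{u}=(k+1)-u^{-1}\tr\!\big(b(u)|_{(TY^\perp)^2}\big)+u^{-1}\langle\nabla_{g_+}u,H^Y\rangle.
\]

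Now I would estimate the three contributions term by term. The combination $(k+1)-|\nabla^Y u|^2/u^2\ge k$ by the gradient estimate; the normal-trace term is bounded below by $-\beta^Y(u)$ by the very definition of $\beta^Y(u)$ as a supremum over $Y$; and for the mean-curvature term, since $H^Y$ is normal only the component $(\nabla_{g_+}u)^\perp$ contributes, so Cauchy–Schwarz with the gradient estimate and $\|H^Y\|\le\alpha$ gives $u^{-1}\langle\nabla_{g_+}u,H^Y\rangle\ge-u^{-1}|\nabla_{g_+}u|\,\|H^Y\|\ge-\alpha$. Assembling these yields $\operatorname{div}_{h_+}(X)\ge k-\beta^Y(u)-\alpha$ pointwise; the hypothesis $\beta^Y(u)+\alpha<k$ is precisely what makes this constant positive, so the calibration argument delivers $\Ch(Y)\ge k-\beta^Y(u)-\alpha>0$. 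Finally, since $\Ch(Y)$ is independent of the choice of Lee-eigenfunction while the right-hand side is not, I would take the supremum over all Lee-eigenfunctions $u$ (equivalently the infimum of $\beta^Y(u)$) to obtain the sharper $\Ch(Y)\ge k-\hat\beta^Y-\alpha$; existence of at least one admissible $u$ is guaranteed by the WPE, $\Ric_{g_+}+ng_+\ge0$, and nonnegative-Yamabe hypotheses.

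I expect the only genuinely delicate point to be bookkeeping rather than analysis: getting the signs and the tangential/normal split correct in the restricted-Laplacian formula, and in particular recognizing that the globally trace-free property of $b(u)$ converts the a priori awkward tangential trace $\tr\big(b(u)|_{TY}\big)$ into $-\tr\big(b(u)|_{(TY^\perp)^2}\big)$ — which is exactly why the obstruction is measured on the normal bundle through $\beta^Y(u)$. The analytic content (the divergence theorem on compact $\Omega$ and admissibility of $X$) is routine once $|X|_{h_+}\le1$ is secured by the gradient estimate.
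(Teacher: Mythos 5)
Your proposal is correct and takes essentially the same approach as the paper: your calibration field $X=\nabla^Y(\log u)$ is precisely the gradient of the paper's test function $\hat f=\ln(u|_Y)$, so your pointwise bound $\operatorname{div}_{h_+}(X)\ge k-\beta^Y(u)-\alpha$ coincides with the paper's estimate $\Delta_{h_+}\hat f\ge k-\beta^Y(u)-\alpha$, obtained by the same tangential/normal split of the ambient Hessian, trace-freeness of $b(u)$, Cauchy--Schwarz on the mean-curvature term, and the Lee gradient estimate. The concluding divergence-theorem step over compact domains is likewise identical to the paper's integration-by-parts argument.
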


We would like to remark that $\hat \beta^Y$ is an invariant of $h_+$ that naturally arises in the study of eigenvalue estimates on submanifolds; see Theorem 1.12 in \cite{pérezayala-tyrrell}. If $Y^{k+1}$ is a minimal and conformally compact submanifold of a PE manifold with conformal infinity of non-negative Yamabe type, then $\hat \beta^Y>0$ if $Y^{k+1}$ possesses $L^2$-eigenvalues (Corollary 1.13 in \cite{pérezayala-tyrrell}), and it would be interesting to study whether the converse holds. If the bulk manifold is $\mathbb{H}^{n+1}(-1)$, then it turns out that $\hat \beta^Y = 0$ (see the proof of Corollary 1.14 in \cite{pérezayala-tyrrell}) and we obtain the following corollary:
\begin{corollary}\label{CheegerConstantSubmanifoldInH}
    Let the same assumptions be as in Theorem \ref{LowerBoundCheegerConstant-Submanifold}. If we further assume that $(M^{n+1},g_+) = (\mathbb{H}^{n+1}(-1),g_H)$, then $\hat 
    \beta^Y = 0$ for any such submanifold. Therefore, 
    \[
    \Ch(Y^{k+1})\ge k-\alpha>0.
    \]
    Furthermore, if such $Y$ is minimal, then 
    \[
    \Ch(Y^{k+1})\ge k.
    \]
\end{corollary}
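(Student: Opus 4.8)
The plan is to isolate the only nontrivial point, namely the claim $\hat\beta^Y=0$, and then read off both Cheeger estimates directly from Theorem \ref{LowerBoundCheegerConstant-Submanifold}. First I would check that $(\mathbb{H}^{n+1}(-1),g_H)$ meets the hypotheses of that theorem: it is Poincar\'e--Einstein, hence WPE, with $\Ric_{g_+}+ng_+=0\ge 0$, and its conformal infinity is the round sphere $(\mathbb{S}^n,[g_{round}])$, which has positive Yamabe invariant. Thus, granting $\hat\beta^Y=0$ together with the bound $\|H^Y\|\le\alpha$ for $\alpha<k$, the theorem yields $\Ch(Y^{k+1})\ge k-\hat\beta^Y-\alpha=k-\alpha>0$; in the minimal case $H^Y\equiv 0$, so we may take $\alpha=0$ and obtain $\Ch(Y^{k+1})\ge k$. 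Everything therefore reduces to computing $\hat\beta^Y$ on hyperbolic space.

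For the upper bound $\hat\beta^Y\le 0$ I would exhibit a single Lee-eigenfunction whose trace-free Hessian vanishes identically. The natural candidate is $u_0=\cosh\rho$, where $\rho=d_{g_+}(p_0,\cdot)$ is the geodesic distance from a fixed interior point $p_0$ (equivalently, the restriction to the hyperboloid model $\{\,x\in\mathbb{R}^{n+1,1}:\langle x,x\rangle=-1,\ x^0>0\,\}$ of a future-timelike linear functional $\langle a,\cdot\rangle$). A standard space-form computation for radial functions gives $\nabla^2_{g_+}u_0=u_0\,g_+$, that is, $b(u_0)\equiv 0$; tracing yields $\Delta_{g_+}u_0=(n+1)u_0$, while the same identity gives $\nabla_{g_+}\bigl(|\nabla_{g_+}u_0|^2-u_0^2\bigr)=0$, so that $|\nabla_{g_+}u_0|^2=u_0^2-1\le u_0^2$. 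Since $u_0\sim r^{-1}$ for the special defining function attached to a round representative, $u_0$ is a genuine Lee-eigenfunction. Because $b(u_0)\equiv 0$, every normal-bundle trace $\tr\!\left(b(u_0)|_{(TY^\perp)^2}\right)$ vanishes on $Y$, whence $\beta^Y(u_0)=0$ and $\hat\beta^Y=\inf_u\beta^Y(u)\le 0$. I would emphasize that this upper bound is already enough to deduce the stated Cheeger inequalities.

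The matching lower bound $\hat\beta^Y\ge 0$ is where I expect the real difficulty, and I would establish it along the lines of the proof of Corollary 1.14 in \cite{pérezayala-tyrrell}. The tool is the Bochner formula: for any solution of $\Delta_{g_+}u=(n+1)u$ on the Einstein background $\Ric_{g_+}=-ng_+$ one computes
\[
\tfrac12\,\Delta_{g_+}|\nabla_{g_+}u|^2=|b(u)|^2+|\nabla_{g_+}u|^2+(n+1)u^2,
\]
so that the function $w:=u^2-|\nabla_{g_+}u|^2\ge 0$ (nonnegativity being exactly the gradient estimate built into the definition of a Lee-eigenfunction) satisfies $\Delta_{g_+}w=-2|b(u)|^2\le 0$. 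Thus $w$ is a bounded-below superharmonic function on $\mathbb{H}^{n+1}$, and the task is to rule out that some Lee-eigenfunction has $\tr\bigl(b(u)|_{(TY^\perp)^2}\bigr)$ bounded above by a negative constant along $Y$, i.e. to show $\beta^Y(u)\ge 0$ for every admissible $u$. Unlike the upper bound, this step genuinely uses the global PDE theory on the complete noncompact space — controlling $w$ and $b(u)$ at infinity through the asymptotics $u\sim r^{-1}$ and the indicial analysis near $\Sigma$, rather than a single explicit function — and it is the main obstacle; I would import the relevant estimate from \cite{pérezayala-tyrrell}. Combining the two bounds gives $\hat\beta^Y=0$, and the first paragraph then completes the proof.
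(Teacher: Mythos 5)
Your proposal is correct, and at the structural level it is the same as the paper's: the paper's own justification of this corollary consists of quoting the proof of Corollary 1.14 in \cite{pérezayala-tyrrell} for the single fact $\hat\beta^Y=0$ and then substituting into \cref{LowerBoundCheegerConstant-Submanifold}, which is exactly your first paragraph. Where you differ is that you make the operative half of that citation self-contained: you exhibit the Lee-eigenfunction $u_0=\cosh\rho$ with $\nabla^2_{g_+}u_0=u_0\,g_+$, hence $b(u_0)\equiv0$ and $\beta^Y(u_0)=0$ for every $Y$. Your verifications check out --- tracing gives $\Delta_{g_+}u_0=(n+1)u_0$; the gradient identity $|\nabla_{g_+}u_0|^2=u_0^2-1\le u_0^2$ holds; and with the special defining function $r=2e^{-\rho}$ attached to the round representative one has $u_0=r^{-1}+r/4$, so $u_0-r^{-1}=O(1)$ --- and, as you rightly emphasize, feeding this one eigenfunction into \cref{LowerBoundCheegerConstant-Submanifold} (with $\beta^Y(u_0)+\alpha=\alpha<k$) already yields both displayed inequalities. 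So your route buys a proof of the Cheeger statements that does not lean on the earlier paper, whereas the present paper relies on it entirely.

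The part you leave open, $\hat\beta^Y\ge0$, is needed only for the literal equality $\hat\beta^Y=0$ and not for either Cheeger inequality; the paper does not prove it internally either, so this is not a gap relative to the paper's own treatment. Two remarks, though. First, your Bochner identity $\Delta_{g_+}\bigl(u^2-|\nabla_{g_+}u|^2\bigr)=-2|b(u)|^2$ is correct but by itself cannot rule out $\beta^Y(u)<0$; some control of $u^{-1}b(u)$ along $Y$ near infinity is unavoidable. Second, when $Y$ is conformally compact and minimal --- the setting of all of the paper's applications --- there is a softer way to close this within the present paper than the asymptotic analysis you sketch: \cref{CHCONSTANT-UpperBound} gives $\Ch(Y^{k+1})\le k$, while \cref{LowerBoundCheegerConstant-Submanifold} applied to any Lee-eigenfunction $u$ with $\beta^Y(u)<0$ would give $\Ch(Y^{k+1})\ge k-\beta^Y(u)>k$, a contradiction; hence $\hat\beta^Y\ge0$, which is precisely the sandwich recorded in \cref{CheegerConstant-SubHyperbolicSpaceI}. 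For a general complete immersion, as in the hypotheses you inherit from \cref{LowerBoundCheegerConstant-Submanifold}, that upper bound is unavailable, and the $\ge 0$ direction genuinely requires the analysis imported from \cite{pérezayala-tyrrell}.
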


Finally, combining Theorem \ref{CHCONSTANT-UpperBound} and Theorem \ref{LowerBoundCheegerConstant-Submanifold}, we deduce
\begin{theorem}\label{CheegerConstant-SubHyperbolicSpaceI}
    Let the assumptions be as in Theorem \ref{LowerBoundCheegerConstant-Submanifold}. If we further assume that $Y$ is a conformally compact submanifold which is  minimal, then 
    \[
    k\ge \Ch(Y^{k+1})\ge k-\hat \beta^Y.
    \]
    Furthermore, if the bulk space is $\mathbb{H}^{n+1}(-1)$, then $\Ch(Y^{k+1}) = k$.
\end{theorem}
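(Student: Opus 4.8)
The plan is to read off the two-sided bound by pairing the asymptotic upper bound of Theorem \ref{CHCONSTANT-UpperBound} with the global lower bound of Theorem \ref{LowerBoundCheegerConstant-Submanifold}; under the present hypotheses both apply essentially verbatim, so the argument is a short assembly. First I would record the effect of minimality: since $Y$ is minimal its mean curvature vector vanishes identically, $H^Y \equiv 0$, so in particular the bound $\|H^Y\| \le \alpha$ holds with $\alpha = 0$, and $Y$ is asymptotically minimal (that is, $C = 0$). Feeding $C = 0$ into Theorem \ref{CHCONSTANT-UpperBound}, whose hypotheses — $Y$ conformally compact inside the AH bulk $(M^{n+1},g_+)$ — are exactly those granted here, yields the upper bound $\Ch(Y^{k+1}) \le k$.

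For the lower bound I would apply Theorem \ref{LowerBoundCheegerConstant-Submanifold} with $\alpha = 0$: for any Lee-eigenfunction $u$ with $\beta^Y(u) < k$ we obtain $\Ch(Y^{k+1}) \ge k - \beta^Y(u)$, and taking the infimum over all Lee-eigenfunctions gives $\Ch(Y^{k+1}) \ge k - \hat\beta^Y$ (when $\hat\beta^Y \ge k$ this reads as the vacuous bound $\Ch(Y^{k+1}) \ge k - \hat\beta^Y \le 0$, which is automatic). Combining the two estimates produces the claimed sandwich
\[
k \ge \Ch(Y^{k+1}) \ge k - \hat\beta^Y.
\]

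It remains to treat the hyperbolic bulk, which is the only place any genuine computation enters. When $(M^{n+1},g_+) = (\mathbb{H}^{n+1}(-1), g_H)$ I would invoke $\hat\beta^Y = 0$, established in Corollary \ref{CheegerConstantSubmanifoldInH}. The mechanism behind this is that on $\mathbb{H}^{n+1}(-1)$ the globally defined solution $u$ of (\ref{LeeEq}) satisfies $\nabla^2_{g_H} u = u\, g_H$, so its trace-free Hessian $b(u) = \nabla^2_{g_H} u - u\, g_H$ vanishes identically; consequently $u^{-1}\,\tr\!\left(b(u)|_{(TY^{\perp})^2}\right) \equiv 0$ for every conformally compact minimal $Y$, whence $\beta^Y(u) = 0$ and $\hat\beta^Y = 0$. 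With $\hat\beta^Y = 0$ the lower bound becomes $\Ch(Y^{k+1}) \ge k$, which together with $\Ch(Y^{k+1}) \le k$ forces $\Ch(Y^{k+1}) = k$. The only point requiring care is this hyperbolic vanishing: one must confirm both that $b(u) \equiv 0$ and that the gradient estimate singling out a genuine Lee-eigenfunction holds, which follows since $\nabla^2_{g_H} u = u\, g_H$ gives $\nabla_{g_H}\!\left(|\nabla_{g_H} u|^2_{g_H} - u^2\right) = 0$, so $|\nabla_{g_H} u|^2_{g_H} - u^2$ is constant and the admissible choice of $u$ makes this constant nonpositive — exactly the structure exploited in the proof of Corollary 1.14 of \cite{pérezayala-tyrrell}.
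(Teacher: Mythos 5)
Your proposal is correct and follows exactly the paper's route: the paper deduces this theorem by specializing Theorem \ref{CHCONSTANT-UpperBound} to $C=0$ and Theorem \ref{LowerBoundCheegerConstant-Submanifold} to $\alpha=0$, then invoking $\hat\beta^Y=0$ from Corollary \ref{CheegerConstantSubmanifoldInH} for the hyperbolic bulk. Your extra verification that $b(u)\equiv 0$ and that the gradient estimate holds on $\mathbb{H}^{n+1}(-1)$ is sound and simply unpacks what the paper delegates to the proof of Corollary 1.14 in \cite{pérezayala-tyrrell}.
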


The results of this section provide us with tools for constructing examples where some of the conclusions of the results due to Guillarmou--Qing and Hijazi--Montiel--Raulot still hold in a larger class of AH manifolds. These examples are discussed in the following subsection.

\subsubsection{Examples of AH manifolds with \texorpdfstring{$\Ch(M^{n+1},g_+) = n$}\;  and \texorpdfstring{$\mathcal{Y}(\Sigma,[g_+]_\infty)$}\; of any sign.}

We consider minimal and conformally compact submanifolds $Y^{n+1}$ within $\mathbb{H}^{n+2}(-1)$. First, thanks to Proposition 1.8 in \cite{pérezayala-tyrrell}, $Y^{n+1}$ is asymptotically hyperbolic itself. Second, it follows from Theorem \ref{CheegerConstant-SubHyperbolicSpaceI} that $\Ch(Y^{n+1}) = n$. Third, for these submanifolds, it holds that $\Ric^Y + nh_+ = -B^2\le 0$, where $B$ is the second fundamental form of $Y^{n+1}$ (see (\ref{RicciSubmanifod2}) in the appendix), and equality cannot hold everywhere if, for instance, their conformal infinity is of negative Yamabe type. This is because submanifolds inside hyperbolic space are totally geodesic if and only if their boundary is a lower dimensional copy of the standard round sphere \cite{Spivak}. Moreover, minimal and conformally compact submanifolds $Y^{n+1}$ of $\mathbb{H}^{n+2}(-1)$ admit conformal infinities of different Yamabe types. In particular, it could be of negative Yamabe type, at least in the $n+1=3$ case, showing that $\Ch(Y^{n+1}) = n$ alone cannot characterize non-negative Yamabe invariant in the space of non-Einstein, AH manifolds. 

The two classes of examples we construct are discussed in the following theorem:
\begin{theorem}\label{EX-PosNegYamabe}
There are examples of $(n+1)$-dimensional asymptotically hyperbolic spaces \newline$(Y^{n+1},h_+)$ such that
\begin{enumerate}
    \item $\Ch(Y^{n+1}) = n$ and whose conformal infinity is of positive Yamabe type ($n+1\ge 3$);
    \item $\Ch(Y^{n+1}) = n$ and whose conformal infinity is of negative Yamabe type ($n+1=3$). 
\end{enumerate}
The existence of these examples, which do not satisfy $\Ric_{h_+}+nh_+\ge 0$, suggests that the conditions in (\ref{DecayCurvatureConditions}) may not be omitted from Theorem 6 in \cite{HijaziOussama2020TCCo}. Finally, some of the AH manifolds satisfying (1) have a disconnected boundary.
\end{theorem}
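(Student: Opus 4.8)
The plan is to realize every example as a minimal, conformally compact hypersurface $Y^{n+1}$ inside the bulk $\mathbb{H}^{n+2}(-1)$, and to control its conformal infinity by prescribing the asymptotic boundary $\Gamma^n = \partial_\infty Y \subset \mathbb{S}^{n+1} = \partial_\infty \mathbb{H}^{n+2}$. The key observation is that for such a hypersurface the conformal infinity $(\partial Y, [h_+]_\infty)$ is precisely $\Gamma$ equipped with the conformal structure it inherits from the round conformal class on $\mathbb{S}^{n+1}$, so its Yamabe sign is ours to arrange. Since $\mathbb{H}^{n+2}(-1)$ is Einstein (hence WPE) with $\Ric + (n+1)g = 0 \ge 0$ and round, positive-Yamabe conformal infinity $\mathbb{S}^{n+1}$, it meets every hypothesis of Theorem \ref{LowerBoundCheegerConstant-Submanifold} (with $n$ there replaced by $n+1$). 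Thus, once $Y^{n+1}$ is known to be a minimal conformally compact submanifold, Theorem \ref{CheegerConstant-SubHyperbolicSpaceI} hands us $\Ch(Y^{n+1}) = n$ for free. The theorem therefore reduces to producing minimal conformally compact hypersurfaces whose asymptotic boundary $\Gamma$ has the prescribed Yamabe sign and which are \emph{not} totally geodesic, so that $\Ric_{h_+} + nh_+ = -B^2$ fails to be $\ge 0$.

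For the positive Yamabe case ($n+1 \ge 3$) I would use rotationally symmetric minimal hypersurfaces (hyperbolic catenoids) whose asymptotic boundary is a pair of round subspheres $\mathbb{S}^n \sqcup \mathbb{S}^n \subset \mathbb{S}^{n+1}$. These are classical for $\mathbb{H}^3$ and are known to exist in all dimensions for a suitable range of separations; being connected hypersurfaces with disconnected conformal infinity consisting of round spheres, they are of positive Yamabe type and are not totally geodesic, so they simultaneously furnish the positive-Yamabe examples and the promised examples with disconnected boundary. A connected-boundary positive example is then obtained by taking $\Gamma$ a small, non-round perturbation of the equatorial $\mathbb{S}^n$: openness of the positive-Yamabe condition keeps the infinity positive, while non-roundness forces the minimal filling off total geodesy by the characterization cited from \cite{pérezayala-tyrrell}.

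For the negative Yamabe case I specialize to $n+1 = 3$, so $Y^3 \subset \mathbb{H}^4$ and $\Gamma^2 \subset \mathbb{S}^3$. Here I would take $\Gamma$ to be a smoothly embedded closed orientable surface of genus $\ge 2$; by Gauss--Bonnet every conformal structure on such a surface has negative Yamabe invariant, so whatever conformal structure $\Gamma$ inherits as $\partial_\infty Y$ is automatically negative. To fill $\Gamma$ by a minimal hypersurface I would invoke Anderson's solution of the asymptotic Plateau problem in hyperbolic space, producing an absolutely area-minimizing (hence minimal) hypersurface $Y^3$ with $\partial_\infty Y = \Gamma$. In this dimension area-minimizing hypersurfaces are smooth, so $Y^3$ is a genuine smooth submanifold, and it cannot be totally geodesic since $\Gamma$ is not a round $\mathbb{S}^2$.

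The main obstacle is not the Cheeger computation — that is handed to us by Theorem \ref{CheegerConstant-SubHyperbolicSpaceI} — but the asymptotic regularity needed to apply it: one must verify that the hypersurfaces just built are conformally compact of class $C^{3,\alpha}$ up to $\Gamma$ and meet $\mathbb{S}^{n+1}$ orthogonally, so that Proposition 1.8 of \cite{pérezayala-tyrrell} applies and $Y$ is honestly asymptotically hyperbolic. For the explicit catenoids this is read off directly from their defining ODE. For the genus $\ge 2$ fillings it requires the boundary-regularity theory for minimal currents at conformal infinity (Lin, Hardt--Lin, Tonegawa), guaranteeing that the area-minimizer admits a polyhomogeneous expansion and a smooth compactification near $\Gamma$. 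Establishing this regularity, and thereby licensing the reduction above, is the crux; the remaining checks — $\Ch(Y^{n+1}) = n$, the Yamabe sign, and $-B^2 \not\ge 0$ — are then immediate.
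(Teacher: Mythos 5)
Your proposal is correct and follows essentially the same route as the paper: catenoids (the paper's ``spherical'' rotation hypersurfaces of Do Carmo--Dajczer, which are what you describe despite calling them ``hyperbolic'') give the positive-Yamabe, disconnected-boundary examples, Anderson's asymptotic Plateau solutions filling genus $\ge 2$ surfaces in $\mathbb{S}^3$ give the negative-Yamabe examples, Theorem \ref{CheegerConstant-SubHyperbolicSpaceI} yields $\Ch(Y^{n+1})=n$, and $\Ric_{h_+}+nh_+=-B^2$ together with non-total-geodesy rules out the Ricci lower bound. Your explicit flagging of the boundary-regularity issue for the Plateau solutions is a fair point that the paper itself treats only lightly (``if it is regular enough''), but it does not constitute a divergence in method.
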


We now examine whether these examples satisfy the WPE condition. Note that since they do not fulfill the Ricci lower bound, the characterization provided by Theorem \ref{WPE+RiciffScDecayCond} is no longer applicable. Hence, a different approach is needed. Our result on this issue pertains to asymptotically minimal hypersurfaces with mean curvature $H^Y=O(r^2)$, which motivates the following proposition of independent interest:
\begin{proposition}\label{conf inv}
Let $(M^{n+2},g_+)$ be Poincar\'e--Einstein manifold manifold. Let $Y^{n+1}$ be an asymptotically minimal hypersurface of $M$, then $\hat{\eta}-n\overline{B}_{00}$ is a conformal invariant of $\partial Y$ of weight $-1$ (see (\ref{NewConfInv})) which vanishes if and only if $H^Y=O(r^2)$. Here $\hat \eta$ is the mean curvature of $\partial Y$ in $\Sigma$, while $\overline B_{00}$ denotes the second fundamental form of $\overline Y$ in $\overline M$ evaluated on $(\partial_r,\partial_r)$ at points of $\partial Y.$ 
\end{proposition}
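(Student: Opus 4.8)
The plan is to extract both assertions from the two-term asymptotic expansion of the bulk mean curvature $H^Y$, computed with respect to $g_+$, in powers of a special defining function $r$. First I would fix a representative $\hat g$ of the conformal infinity and the associated Graham--Lee special defining function, so that $\overline{g} = r^2 g_+ = dr^2 + g_r$ is in normal form. Since $Y$ is asymptotically minimal ($C=0$), the compactified hypersurface $\overline{Y}$ meets $\Sigma$ orthogonally along $\partial Y$: indeed, the leading coefficient of $H^Y$ turns out to measure exactly the cosine of the intersection angle (this is the mechanism behind the relation between $C$ and the asymptotic angle recalled above), so $C=0$ forces orthogonality. Working in Fermi-type coordinates $(r,x^1,\dots,x^{n+1})$ adapted to $\Sigma$, with $\partial Y = \{x^{n+1}=0\}$ inside $\Sigma$, I would represent $\overline{Y}$ as a graph $\{x^{n+1} = f(r,x^1,\dots,x^n)\}$; orthogonality together with $\partial Y \subset \Sigma$ forces $f(0,\cdot)=0$ and $\partial_r f(0,\cdot)=0$, so $f = O(r^2)$, and the quadratic coefficient $\partial_r^2 f|_{r=0}$ is precisely what carries $\overline{B}_{00}$.

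Next I would record the conformal transformation law for the (unnormalized) mean curvature of a hypersurface of dimension $n+1$: writing $g_+ = e^{2\phi}\overline{g}$ with $\phi = -\log r$ and $\overline{\nabla}\phi = -r^{-1}\partial_r$ in normal form, one obtains $H^Y = r\,H^Y_{\overline{g}} + (n+1)\langle\partial_r,N\rangle_{\overline{g}}$, where $N$ is the $\overline{g}$-unit normal of $\overline{Y}$. I would then expand both pieces to first order in $r$. A direct computation with the graph, using $\overline{\nabla}_{\partial_r}\partial_r = 0$, gives $\langle\partial_r,N\rangle_{\overline{g}} = -\overline{B}_{00}\,r + O(r^2)$ and $H^Y_{\overline{g}}\big|_{r=0} = \overline{B}_{00} + \hat\eta$; the latter holds because tracing the second fundamental form of $\overline{Y}$ over the $\partial_r$-direction produces $\overline{B}_{00}$, while tracing over $T\partial Y$ and pairing against the $\Sigma$-normal reproduces exactly the mean curvature $\hat\eta$ of $\partial Y$ inside $\Sigma$. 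Substituting yields the key expansion
\[
H^Y = r\,\big(\hat\eta - n\,\overline{B}_{00}\big) + O(r^2),
\]
from which $H^Y = O(r^2)$ if and only if $\hat\eta - n\overline{B}_{00} = 0$, establishing the second claim.

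For the conformal invariance I would avoid computing the conformal anomalies of $\hat\eta$ and $\overline{B}_{00}$ separately, and instead exploit that $H^Y$ is the mean curvature of the fixed immersion into the fixed space $(M,g_+)$, hence a single function on $Y$ independent of any choice of defining function. Replacing $\hat g$ by $\hat g_\omega = e^{2\omega}\hat g$ rescales the special defining function to $r_\omega = e^{\omega}r + O(r^2)$. Since the function $H^Y/r$ converges to $\hat\eta - n\overline{B}_{00}$ at $\partial Y$, while $H^Y/r_\omega$ converges to the analogous quantity for $\hat g_\omega$, and since $r/r_\omega \to e^{-\omega}$, I get $(\hat\eta - n\overline{B}_{00})[\hat g_\omega] = e^{-\omega}(\hat\eta - n\overline{B}_{00})[\hat g]$, which is exactly the transformation rule of a conformal density of weight $-1$ on $\partial Y$. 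This simultaneously proves invariance and explains why neither $\hat\eta$ nor $\overline{B}_{00}$ is invariant individually: each carries an inhomogeneous $\partial_\nu\omega$-type term, and only the combination dictated by the coefficient $-n = -(\dim Y - 1)$ cancels them.

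I expect the main obstacle to be the careful corner analysis at $\partial Y$, where $\overline{Y}$ meets $\Sigma$: justifying the graph representation and orthogonality up to the boundary at the stated $C^{m,\alpha}$ regularity, and tracking the first-order coefficients of $H^Y_{\overline{g}}$ and $\langle\partial_r,N\rangle_{\overline{g}}$ without dropping contributions, all while being scrupulous about the sign in the conformal transformation of mean curvature (an error here would turn $\hat\eta - n\overline{B}_{00}$ into $\hat\eta + (n+2)\overline{B}_{00}$). The Poincar\'e--Einstein hypothesis enters in securing the clean normal-form structure used throughout (the even expansion of $g_r$ with totally geodesic $\Sigma$, via $g_{(1)}=0$) and, through the earlier asymptotic analysis, in guaranteeing that asymptotic minimality yields orthogonal intersection; with these structural inputs in place, the computation above is essentially forced.
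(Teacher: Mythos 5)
Your proposal is correct, but it takes a genuinely different route from the paper's own proof. The paper argues via curvature identities: it applies the Gauss-equation formula (\ref{RicciSubmanifod}) to the $(r,r)$-component, invokes the AH Ricci expansion of Proposition \ref{RicciCurvatureExpansion1} on $Y$ and the Poincar\'e--Einstein hypothesis on the bulk (to kill $W^M_{\hat r N \hat r N}$ at the boundary), and thereby derives the quadratic relation $-r^2(B_{rr})^2+H^YB_{rr}=O(r)$; factoring this and passing to the limit produces two cases, and hence only the sign-ambiguous expansion $H^Y=\pm\bigl(\hat\eta-n\overline{B}_{00}\bigr)r+O(r^2)$ (with a sign that, a priori, could even vary from point to point of $\partial Y$). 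You instead expand the identity $H^Y=r\overline{H}^Y+(n+1)\overline{N}(r)$ directly: representing $\overline{Y}$ as a graph $x^{n+1}=f$ with $f=O(r^2)$ (forced by orthogonal intersection, which both proofs deduce from asymptotic minimality), one indeed gets $\overline{B}_{00}=\partial_r^2f|_{r=0}$, $\overline{N}(r)=-\partial_rf\,(1+O(r))=-\overline{B}_{00}\,r+O(r^2)$, and $\overline{H}^Y|_{r=0}=\hat\eta+\overline{B}_{00}$ (this last point is exactly the paper's Lemma \ref{2ffsEqual} combined with $\overline{h}^{ar}=O(r^2)$), yielding $H^Y=\bigl(\hat\eta-n\overline{B}_{00}\bigr)r+O(r^2)$ with a definite sign. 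Your approach buys three things: it resolves the paper's $\pm$ ambiguity; it makes the conformal invariance argument airtight, since $H^Y/r\to\hat\eta-n\overline{B}_{00}$ together with $r/r_\omega\to e^{-\omega}$ immediately gives (\ref{NewConfInv}) without worrying about which branch of the factorization occurs for each representative; and it never actually uses the Einstein condition---only the AH normal form and orthogonal intersection---so the proposition would hold with PE weakened to AH. The paper's route buys economy: it recycles machinery already established (Proposition \ref{RicciCurvatureExpansion1}, the appendix Gauss identities, Lemma \ref{Weyl Lemma}) and avoids the corner/graph analysis you flag as the main technical burden. The only caveats in your sketch are that the two key first-order expansions are asserted rather than computed (they do check out, using $\nabla^{\overline{g}}_{\partial_r}\partial_r=0$ and the Gauss lemma $\hat g_{a,n+1}=0$ for the Fermi coordinate $x^{n+1}$), and that $\hat\eta$ and $\overline{B}_{00}$ must be computed with respect to the same co-oriented normal so that the combination, rather than each term, is well defined up to an overall sign.
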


Finally, our next result provides a characterization for WPE, AH hypersurfaces living inside a PE manifold. 

\begin{theorem}\label{WPE-Submanifolds}
    Assume $n+1>3$. Let $(M^{n+2},g_+)$ be a PE space. Let $Y^{n+1}$ be an asymptotically minimal hypersurface with mean curvature satusfying $H^Y=O(r^2).$ Then $(Y^{n+1}, \hat h)$ is WPE if and only if its boundary $\partial Y$ is umbilic in $(\Sigma, \hat g)$ and its third conformal fundamental form as a hypersurface in $\Sigma$ vanishes.
\end{theorem}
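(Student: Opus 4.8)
The plan is to reduce the claim about the hypersurface $(Y^{n+1},\hat h)$ being WPE to the intrinsic characterization already available in Theorem \ref{WPE-Characterization}, and then to translate each of the two conditions appearing there---namely that $\partial Y$ is umbilic in $\overline{Y}$ and that the third conformal fundamental form $\overline{W}^{\,Y}_{0i0j}$ of $\partial Y$ (computed within $\overline{Y}$) vanishes---into the stated extrinsic data on $\partial Y$ sitting inside the conformal infinity $(\Sigma,\hat g)$. Since we are told that $Y$ is asymptotically minimal with $H^Y = O(r^2)$, we may apply Proposition \ref{conf inv}: the vanishing of the second-order mean curvature is encoded in the conformal invariant $\hat\eta - n\overline{B}_{00}$, so that along the way the extrinsic geometry of $\partial Y$ in $\overline{Y}$ and in $\partial M$ will be linked through this single obstruction.

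First I would set up holographic normal-form coordinates adapted simultaneously to $\overline{M}$ and to $\overline{Y}$: fix a special defining function $r$ for $\Sigma$ in $M$, write $\overline g = dr^2 + g_r$, and choose coordinates on $Y$ so that the induced compactified metric $\hat h = r^2 h_+$ is itself in normal form $\hat h = dr^2 + h_r$. Because $M$ is PE, the ambient expansion of $g_r$ is even and governed by $P_{\hat g}$ through \eqref{WE-expansion}; the task is to compute the analogous expansion of $h_r$ and read off $h_{(1)}$ and $h_{(2)}$. By Theorem \ref{WPE-Characterization}(2) applied to $Y$, the hypersurface is WPE precisely when $h_{(1)} = 0$ (equivalently $\partial Y$ umbilic in $\overline Y$) and $\overline{W}^{\,Y}_{0i0j}|_{\partial Y} = 0$. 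So the two analytic conditions to control are the first- and second-order coefficients of $h_r$.

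Next I would carry out the Gauss--Codazzi comparison between $Y$ and $M$. The second fundamental form of $Y$ in $M$ controls how the induced expansion $h_r$ deviates from the restriction of $g_r$; in particular $h_{(1)}$ should be expressible through the boundary second fundamental form of $\partial Y$ within $\Sigma$ together with $\overline B_{00}$, and the umbilicity of $\partial Y$ in $\overline Y$ should reduce---after using the asymptotic minimality and the $O(r^2)$ condition on $H^Y$---to the tracefree part of the boundary second fundamental form of $\partial Y$ in $\Sigma$, i.e. to $\partial Y$ being umbilic in $(\Sigma,\hat g)$. For the second-order term, I would compute $\overline{W}^{\,Y}_{0i0j}$ via the Gauss equation relating the intrinsic curvature of $\overline Y$ to $\overline{W}$ of $\overline{M}$ (which vanishes appropriately on $\Sigma$ since $M$ is PE) plus quadratic second-fundamental-form terms, and then identify the surviving boundary contribution with the third conformal fundamental form of $\partial Y$ as a hypersurface in $\Sigma$. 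Proposition \ref{conf inv} is the bridge that guarantees the cross-terms mixing $\overline B_{00}$ and $\hat\eta$ organize into the intrinsic hypersurface data on $\Sigma$ rather than leaving residual bulk dependence.

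The hard part will be the second-order bookkeeping: disentangling which pieces of $\overline{W}^{\,Y}_{0i0j}$ come from the ambient Weyl/Schouten tensor of $M$ (restricted to $Y$) versus from the extrinsic curvature of $Y$ in $M$, and then showing that the former drops out on $\partial Y$ by the PE condition while the latter reassembles exactly into the third conformal fundamental form of $\partial Y$ inside $\Sigma$. This requires the hypothesis $H^Y = O(r^2)$ in an essential way, since a slower decay of the mean curvature would contaminate the order-$2$ coefficient of $h_r$ and spoil the clean identification; the conformal invariance supplied by Proposition \ref{conf inv} is what makes the resulting boundary expression a genuine invariant of $(\Sigma,\hat g)$ independent of the choice of special defining function. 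The dimensional restriction $n+1 > 3$ enters through the definition of the Schouten tensor $P_{\hat h}$ for $\partial Y$ and ensures Theorem \ref{WPE-Characterization}(2) applies to $Y$.
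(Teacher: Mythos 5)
Your overall frame---apply \cref{WPE-Characterization} to $(Y^{n+1},h_+)$ and use \cref{conf inv} to exploit $H^Y=O(r^2)$---matches the paper, but the mechanism you propose for what you call the hard part is backwards, and that is exactly where the theorem lives. You assert that the ambient Weyl contribution ``drops out on $\partial Y$ by the PE condition'' and that the third conformal fundamental form of $\partial Y$ in $\Sigma$ is reassembled from the extrinsic curvature of $Y$ in $M$. In fact the PE condition does \emph{not} kill the ambient Weyl tensor at the boundary: what it gives (\cref{Weyl Lemma} in the paper) is $\overline{W}^M_{ijkl}=W^{\Sigma}_{ijkl}+O_{ijkl}(r)$ for tangential indices, i.e.\ the ambient Weyl tensor restricted to $\Sigma$ \emph{equals} the Weyl tensor of the conformal infinity, which is generically nonzero; only the component $\overline{W}_{0i0j}$, with two indices in the $\partial_r$ direction, vanishes for PE. In the Gauss equation (\ref{RicciSubmanifod}), $R^Y_{\alpha\gamma}+nh_{\alpha\gamma}=-B^2_{\alpha\gamma}+H^YB_{\alpha\gamma}-W^{M}_{\hat{\alpha}N\hat{\gamma}N}$, it is precisely this surviving ambient term, $W^{M}_{\hat{\alpha}N\hat{\gamma}N}=r^{-2}\overline{W}^M_{\hat{\alpha}\overline{N}\hat{\gamma}\overline{N}}\to W^{\Sigma}_{a\overline{N}b\overline{N}}$, that becomes the third conformal fundamental form of $\partial Y$ inside $\Sigma$; the quadratic extrinsic terms $B^2$ can only ever produce $\mathring{II}^2$ (via \cref{conf inv} and \cref{2ffsEqual}), which is the umbilicity obstruction. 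With your attribution, the second-fundamental-form terms would have to generate a Weyl-type curvature of $\Sigma$, which they cannot, so carried out literally your computation would deliver ``WPE iff $\partial Y$ is umbilic'' and the Weyl condition in the statement would never appear. You also omit the step that separates the two conditions at the end: one arrives at $\mathring{II}^2_{ab}+W^{\Sigma}_{a\overline{N}b\overline{N}}=0$ along $\partial Y$, and since the Weyl term is trace-free while $\mathring{II}^2$ is positive semi-definite, tracing forces each to vanish individually.

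Two further gaps. First, you propose to read off $h_{(1)}$ and $h_{(2)}$ by putting $\hat h$ in normal form with respect to the ambient $r$; but $r|_Y$ is not in general a special defining function for $(Y,h_+)$ (orthogonal intersection controls $|d\hat r|_{\hat h}$ only at $\partial Y$, and the deviation enters at exactly the order relevant for $h_{(2)}$), so this identification needs justification you do not supply. The paper sidesteps the issue entirely by using the defining-function-independent curvature characterizations in \cref{WPE-Characterization}: it computes $\Ric_{h_+}+nh_+$ and $R_{h_+}+n(n+1)$ directly from (\ref{RicciSubmanifod}) and (\ref{SCBABY}) rather than metric coefficients. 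Second, your restatement of \cref{WPE-Characterization}(2) as ``WPE iff $h_{(1)}=0$ and $\overline{W}^{\,Y}_{0i0j}=0$'' drops the scalar-curvature condition $R_{h_+}+n(n+1)=o(r^2)$, which is what fixes the trace part of $h_{(2)}$ (it yields $\overline{P}^{\,Y}_{00}=0$); totally geodesic boundary plus vanishing $\overline{W}^{\,Y}_{0i0j}$ alone only pins down the trace-free part of $h_{(2)}$ and does not imply WPE.
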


We would like to make some clarifications regarding  Theorem \ref{WPE-Submanifolds}. Recall that any asymptotically minimal submanifold within an AH space is AH itself; Proposition 1.8 in \cite{pérezayala-tyrrell}. Therefore, for $\hat h$ as in the statement of the theorem, it follows from Graham--Lee \cite{GrahamLee} that there is a special defining function $\rho$ for $\partial Y$ such that $\tilde h := \rho^2h_+$ restricts to $\hat h$ on $\partial Y$. The new compactified metric $\tilde h$ can now be written in normal form near the boundary and expanded in terms of $\rho$ at $\rho=0$, 
\[
\tilde h = d\rho^2+h_\rho = d\rho^2 + \hat h + h_{(1)}\rho + h_{(2)}\rho^2 + O(\rho^3).
\]
Being WPE then means that \( h_{(1)} = 0 \) and \( h_{(2)} = -P_{\hat h} \). Notably, \( h_{(1)} = 0 \) is equivalent to \( \partial Y \) being umbilic within \( (\overline{Y}, \tilde{h}) \). What is significant in Theorem \ref{WPE-Submanifolds} is that the WPE condition is characterized using extrinsic information.

As a consequence of Theorem \ref{WPE-Submanifolds}, we argue that the examples provided in Theorem \ref{EX-PosNegYamabe}, part (1), satisfy the WPE condition. However, these examples have a disconnected boundary, demonstrating that the result of Witten–Yau \cite{WittenYau}, which asserts the connectedness of the conformal infinity of a PE manifold with a conformal infinity of positive Yamabe type, does not extend to the broader class of WPE manifolds. In fact, Hijazi–Montiel–Raulot showed in \cite{HijaziOussama2020TCCo} (Theorem 5) that Witten–Yau’s result holds on WPE manifolds that satisfy the Ricci lower bound (\ref{DecayCurvatureConditions}), not just on PE manifolds. Our examples show that the Ricci lower bound cannot be dropped. On the other hand, the examples given in Theorem \ref{EX-PosNegYamabe}, part (2), do not satisfy the WPE condition and therefore fail to meet the decay condition on the scalar curvature appearing in (\ref{DecayCurvatureConditions}). This implies that the scalar curvature decay condition cannot be omitted from the assumptions in Corollary 4 of \cite{HijaziOussama2020TCCo}.

\subsection{Organization of the Paper.} The paper is organized as follows. In Section \ref{Preliminaries}, we provide a self-contained and necessary background on asymptotically hyperbolic (AH) manifolds. In particular, we give full details on the expansion of the Riemannian metric up to third order, and of the mean curvature and second fundamental form up to second order of the $r$-level set, all in the general case of an AH manifold in holographic coordinates. In Section \ref{WPE+Ric-Manifolds}, we present the proofs of Theorem \ref{WPE+RiciffScDecayCond} and Theorem \ref{WPE-Characterization}. Of particular interest here are the expansions of both the Ricci tensor and the scalar curvature in holographic coordinates for an AH metric. In Section \ref{CheegerConstantOnSubmanifolds}, we prove Theorem \ref{CHCONSTANT-UpperBound}, Theorem \ref{LowerBoundCheegerConstant-Submanifold}, and Theorem \ref{CheegerConstant-SubHyperbolicSpaceI}, together with Corollary \ref{CheegerConstantSubmanifoldInH}. As in our previous work \cite{pérezayala-tyrrell}, we continue to use Lee-eigenfunctions to relate the geometry of the bulk manifold to that of the asymptotically minimal submanifolds, a technique that could be useful and of interest when addressing questions about this type of submanifold. Finally, in Section \ref{Examples}, we provide the proof of Theorem \ref{EX-PosNegYamabe} and Theorem \ref{WPE-Submanifolds}. A key consequence of our work is the construction of a large class of AH manifolds that are neither WPE nor satisfy the Ricci lower bound (\ref{DecayCurvatureConditions}), yet still attain the sharp value for both the Cheeger constant and the first Dirichlet eigenvalue. We believe these manifolds could provide many other interesting examples and help shed light on open questions in the field. Finally, in Section \ref{Examples}, we also prove Proposition \ref{conf inv}, where a new extrinsic conformal invariant arises as the obstruction to the vanishing of the mean curvature of an asymptotically minimal hypersurface to second order. This invariant is coupled in the sense that it depends on the extrinsic geometry of the boundary of the hypersurface within both the hypersurface itself and within the boundary of the bulk manifold.

\section{Preliminaries}\label{Preliminaries}
Let $(M^{n+1},g_+)$ be an AH manifold and let $\overline g = r^2g_+$ be a compactification of it, where $r$ is a special defining function. The flow of $\nabla_{r^2g_+}r$ determines a diffeomorphism $\phi \colon C\to \Sigma\times [0,\epsilon_0)_r,$ where $C$ is a neighbourhood of $\Sigma$ in $M.$ $\phi$ is the inverse of the map $[(p,r)\mapsto F(p,r)]$ where $[r\mapsto F(p,r)]$ is the integral curve of $\nabla_{r^2g_+}r$ emanating from $p.$    This provides us with an identification of this neighborhood of $\Sigma$ in $M$ with $\Sigma\times [0,\epsilon_o)_r$ on which the metric can be written as:
\begin{equation}
    \overline g = dr^2 + g_r,
\end{equation}
where $g_r$ is a one-parameter family of metrics in $\Sigma$ with $g_0 = \hat g = (r^2g_+)|_{T\Sigma}$ being the induced metric on the boundary. With this identification we may extend any local coordinates on a neighborhood within $\Sigma$ to local coordinates on a neighborhood within $M,$ we call such an extended coordinate system a holographic coordinate system.

\begin{figure}[h]
    \centering
    \includegraphics[scale=0.23]{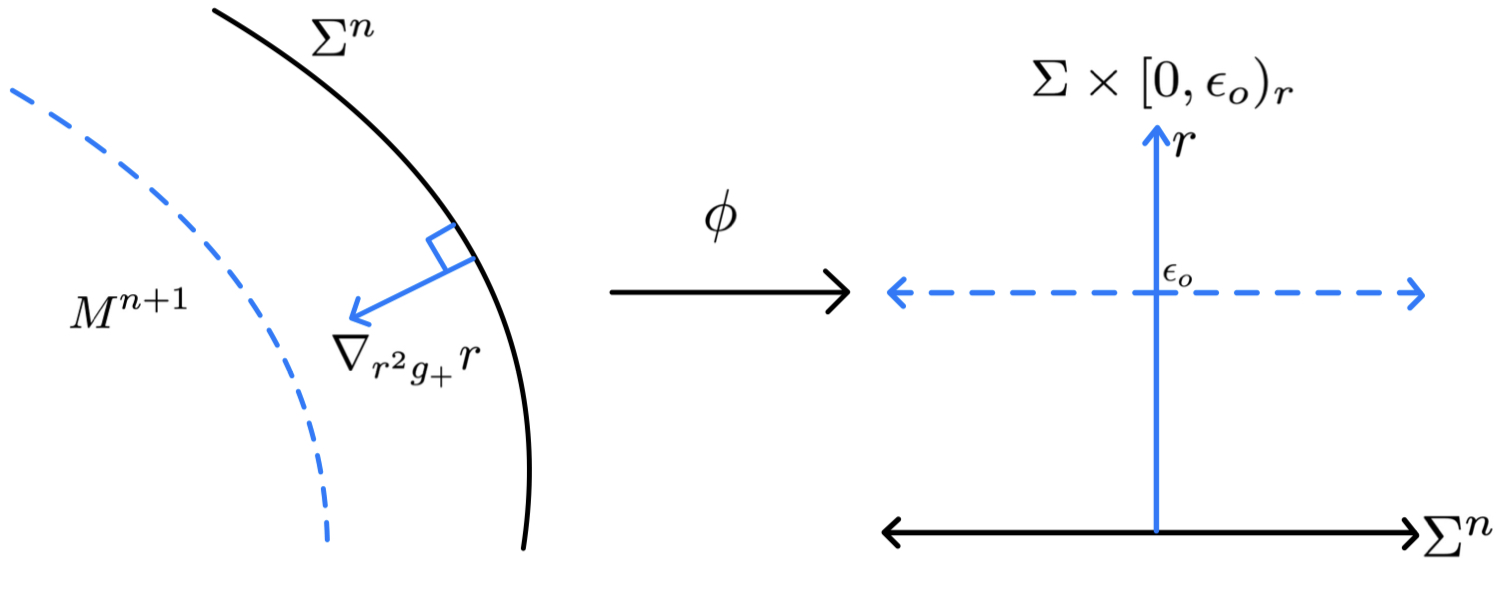}
    \caption{Holographic coordinates near the boundary of an AH manifold.}
    \label{HoloCoord}
\end{figure}

For $r\in[0,\epsilon_o)$, the $r$-level set $\Sigma_r = \{x\in M: r(x) = r\}$ of our special defining function is a smooth hypersurface. We denote by $ \overline L_r$  and $\overline{H}_r = \Tr_{\overline g}(\overline L_r)$ the second fundamental form and mean curvature, respectively, of  $\Sigma_r$ with respect to the compactified metric. More explicitly, $\overline H_r = \Tr_{\overline g}(\overline L_r) = \overline g^{ij}(\overline L_r)_{ij} = g_r^{ij}(\overline L_r)_{ij}$. All computations that follow will be done in this collar neighborhood and with respect to these coordinates. Latin letters are used for tangential directions, $r$ is used to denote the $\partial_r$ direction and $0$ is used to denote the direction of $\partial_r|_{r=0}$; we also use Greek letters to denote either. 

Our first goal is to compute the expansion of the metric $g_r$ up to the third order, see Proposition \ref{ExpansionMetricOrder3}. This has been computed in different places, for instance, some terms are computed in \cite{Graham} in the context of PE spaces and also in \cite{GrahamCRobin2019Cffs} in the context of singular Yamabe metrics. Here, we are working in the more general context of AH metrics and thus for the reader's convenience we include all details.

First, notice that  
\begin{equation}\label{FirstDerivative}
    (\overline L_r)_{ij} = \overline g(\overline  \Gamma_{ij}^\alpha\partial_\alpha, \partial_r) = \overline \Gamma_{ij}^r = -\frac{1}{2} g'_{ij}.
    \end{equation}
    This gives the first order term in the expansion of $g_r$ at $r=0$, that is,
    \[
    g_r = \hat g-2\hat L_{ij}\cdot r +\cdots,
    \]
    where $\hat L$ denotes the second fundamental form of $(\Sigma, \hat g)$. The other Christoffel symbols with respect to the compactified metric are given by
   \begin{equation}
    \overline \Gamma_{rj}^k = \frac{1}{2}\overline g^{ki}g'_{ij} = -(\overline L_r)^k_j.
    \end{equation}
    and
    \begin{equation}
    \overline \Gamma^r_{rj} = \overline \Gamma^r_{rr} = \overline \Gamma_{rr}^k = 0.
    \end{equation}
    The following proposition is discussed in \cite{GrahamCRobin2019Cffs} (see equation (2.12)), though most of the computations are omitted. We provide the details here for the reader's benefit. 

\begin{proposition}\label{ExpansionMetricOrder3}
Let $r$ be a special defining function and let $\overline g$ be the associated compactified metric. The expansion of the metric $(g_r)_{ij}$ near the boundary ($r=0$) is given by 
\begin{equation}\label{ExpansionMetric}
\begin{split}
g_{ij} =&\; g_{(0)} + g_{(1)}\cdot r + g_{(2)}\cdot r^2 + g_{(3)}\cdot r^3 + O(r^4)\\=&\; \hat g_{ij} + (-2\hat L_{ij})\cdot r + (-\overline R_{0i0j} + (\hat L)^a_i(\hat L)_{aj})\cdot r^2 \\&+ \frac{1}{3}(-\overline R_{0i0j,0} + ((\hat L)^k_i\overline R_{0k0j} + (\hat L)^k_j\overline R_{0k0i}))\cdot r^3+ O_{ij}(r^4),
\end{split}
\end{equation}
where $O_{ij}(r^4)$ denotes the components of a symmetric two-tensor.
\end{proposition}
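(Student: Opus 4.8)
The plan is to read off the coefficients $g_{(m)}$ as the normalized $r$-derivatives $g_{(m)} = \tfrac{1}{m!}\,\partial_r^m g_{ij}\big|_{r=0}$ and to invoke Taylor's theorem, which is legitimate through third order by the standing $C^{m,\alpha}$ regularity with $m\ge 3$ (the remainder being absorbed into $O_{ij}(r^4)$). The zeroth and first coefficients are immediate: $g_{ij}(0)=\hat g_{ij}$ by the normal-form identification, and \eqref{FirstDerivative} gives $g'_{ij}(0)=-2(\overline L_r)_{ij}\big|_{0}=-2\hat L_{ij}$. So all of the work sits in the second and third coefficients, i.e.\ in computing $g''_{ij}(0)$ and $g'''_{ij}(0)$.

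The engine for this is a radial curvature (Riccati-type) identity valid throughout the collar. Using the Christoffel symbols recorded above --- namely $\overline\Gamma^r_{ij}=-\tfrac12 g'_{ij}$, $\overline\Gamma^k_{rj}=\tfrac12\overline g^{km}g'_{mj}$, and $\overline\Gamma^r_{rj}=\overline\Gamma^r_{rr}=\overline\Gamma^k_{rr}=0$ --- one computes directly from the definition of the curvature tensor that
\[
\overline R_{rirj}=-\tfrac12 g''_{ij}+\tfrac14\,\overline g^{kl}g'_{ik}g'_{jl},
\qquad\text{equivalently}\qquad
g''_{ij}=-2\overline R_{rirj}+\tfrac12\,\overline g^{kl}g'_{ik}g'_{jl}.
\]
The structure is transparent: since every Christoffel symbol carrying two lower $r$-indices vanishes, the only contributions to $\overline R_{rirj}$ come from $\partial_r\overline\Gamma^r_{ij}$ (producing $-\tfrac12 g''_{ij}$) and from the single quadratic product $-\overline\Gamma^r_{jk}\overline\Gamma^k_{ri}$ (producing the term quadratic in $g'$); in particular this pins down the sign convention, and the $r=0$ check below confirms it against $g_{(2)}$. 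Evaluating the identity at $r=0$ with $g'(0)=-2\hat L$ gives $g''_{ij}(0)=-2\overline R_{0i0j}+2\hat L^a_i\hat L_{aj}$, whence $g_{(2)}=\tfrac12 g''(0)=-\overline R_{0i0j}+\hat L^a_i\hat L_{aj}$, as claimed.

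For the third coefficient I would differentiate the displayed identity once more in $r$, obtaining
\[
g'''_{ij}=-2\,\partial_r\overline R_{rirj}+\tfrac12\big[(\partial_r\overline g^{kl})g'_{ik}g'_{jl}+\overline g^{kl}g''_{ik}g'_{jl}+\overline g^{kl}g'_{ik}g''_{jl}\big],
\]
and evaluate at $r=0$ using $\partial_r\overline g^{kl}\big|_0=-\hat g^{ka}\hat g^{lb}g'_{ab}(0)=2\hat L^{kl}$ together with the values of $g'(0)$ and $g''(0)$ found above. Here I read the comma in $\overline R_{0i0j,0}$ as the coordinate derivative $\partial_r\overline R_{rirj}\big|_0$, consistent with the holographic conventions fixed at the start of the section. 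The quadratic bracket then produces, besides the curvature terms $2\hat L^k_j\overline R_{0i0k}+2\hat L^k_i\overline R_{0j0k}$, a collection of terms cubic in $\hat L$; the crux of the computation is that these cubic terms cancel identically, since each of $\hat L^{kl}\hat L_{ik}\hat L_{jl}$, $\hat L^k_j\hat L^a_i\hat L_{ak}$, and $\hat L^k_i\hat L^a_j\hat L_{ak}$ equals the same symmetric triple contraction $\hat L_i{}^a\hat L_{ab}\hat L_j{}^b$, so the combination $8-4-4$ vanishes. After using the pair symmetry $\overline R_{0i0k}=\overline R_{0k0i}$ to align indices, one is left with $g'''_{ij}(0)=-2\overline R_{0i0j,0}+2\hat L^k_i\overline R_{0k0j}+2\hat L^k_j\overline R_{0k0i}$, and dividing by $3!=6$ yields exactly the stated $g_{(3)}$.

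I expect the genuine obstacle to be the third-order bookkeeping rather than any conceptual difficulty. One must derive the radial curvature identity with the correct signs under the paper's curvature convention (the $r=0$ comparison with $g_{(2)}$ fixes these), settle the partial-versus-covariant reading of the comma (the partial reading is forced by the coefficient $1$, rather than $2$, in front of the $\hat L\cdot\overline R$ terms), and verify the cubic cancellation with care. Everything else is routine differentiation of the normal-form structure equations.
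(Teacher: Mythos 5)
Your proposal is correct and follows essentially the same route as the paper: both read off the Taylor coefficients from successive $r$-derivatives of $g_r$, derive the identity $g''_{ij}=-2\overline R_{rirj}+2(\overline L_r)^a_i(\overline L_r)_{aj}$ (your Riccati-type identity is this formula rewritten via $(\overline L_r)_{ij}=-\tfrac12 g'_{ij}$) from the Christoffel symbols of the normal form, and obtain $g'''$ by differentiating that identity in $r$, with the comma denoting the coordinate partial derivative exactly as in the paper. The only difference is bookkeeping: the paper carries the cubic terms as $(\overline L_r)^3$ and cancels them for all $r$ in the collar, whereas you work directly with $g'$, $g''$ and verify the equivalent $8-4-4$ cancellation at $r=0$; your evaluation matches the paper's formula $g'''_{ij}=-2\overline R_{rirj,r}+2\big((\overline L_r)^k_i\overline R_{rkrj}+(\overline L_r)^k_j\overline R_{rkri}\big)$ term by term.
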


\begin{proof}
We have already computed the first derivative of the metric. In order to compute the second and third derivative, we proceed as follows. The components of the Riemann tensor $\overline R_{rirj}$ with respect to the compactified metric $\overline g$ are given by 
\[
\begin{split}
    \overline R_{rirj} &= \overline R_{rir}^\alpha\overline g_{\alpha j} = \overline R_{rir}^kg_{kj} \\&=  \overline \Gamma_{rr,i}^kg_{kj} - \overline \Gamma_{ir,r}^kg_{kj} + (\overline \Gamma_{rr}^a \overline \Gamma_{ia}^k  - \overline \Gamma_{ir}^a \overline \Gamma_{ra}^k)g_{kj} \\& =  - \overline \Gamma_{ir,r}^kg_{kj} - \overline \Gamma_{ir}^a \overline \Gamma_{ra}^kg_{kj},
    \end{split}
    \]
where $g = g_r$. Moreover, its first derivative is
    \[
    \overline R_{rirj,r} = -\overline \Gamma^k_{ir,rr}g_{kj} - \overline \Gamma_{ir,r}^kg'_{kj} - \overline \Gamma_{ir,r}^a\overline \Gamma^k_{ra}g_{kj} - \overline \Gamma_{ir}^a\overline\Gamma_{ra,r}^kg_{kj} - \overline\Gamma_{ir}^a\overline\Gamma_{ra}^kg'_{kj}.
    \]

Let us start with the second derivative, which we claim to be given by 
\begin{equation}\label{SecondDerivative}
    g''_{ij} = -2\overline R_{rirj} + 2(\overline L_r)^a_i(\overline L_r)_{aj}.
    \end{equation}
We prove it by working out the terms appearing in $\overline R_{rirj}$. First, 
    \[
    \begin{split}
    \overline \Gamma_{ir,r}^kg_{kj} &= \frac{1}{2}\partial_r(g^{ka}g'_{ai})g_{kj} = \frac{1}{2}(g^{ka})'g'_{ai}g_{kj} +\frac{1}{2}g^{ka}g''_{ai}g_{kj} \\ &= -\frac{1}{2}g^{ka}g'_{ai}g'_{kj} +\frac{1}{2}g''_{ij} = -\frac{1}{2}g^{ka}(-2(\overline L_r)_{ai})(-2(\overline L_r)_{kj}) +\frac{1}{2}g''_{ij} \\&= -2(\overline L_r)^k_i(\overline L_r)_{kj} + \frac{1}{2}g''_{ij}.
    \end{split}
    \]
To compute the the second term in $\overline R_{rirj}$, we proceed as follows:
    \[
    \overline \Gamma_{ir}^a \overline \Gamma_{ra}^kg_{kj} = (\overline L_r)^a_i(\overline L_r)^k_ag_{kj} = g^{ab}g^{kc}(\overline L_r)_{ib}(\overline L_r)_{ac}g_{kj} = (\overline L_r)^a_i(\overline L_r)_{aj}.
    \]
Therefore,
    \[
    \begin{split}
    2\overline R_{rirj} &=  - 2\overline \Gamma_{ir,r}^kg_{kj} - 2\overline \Gamma_{ir}^a \overline \Gamma_{ra}^kg_{kj} \\&= 4(\overline L_r)^k_i(\overline L_r)_{kj} - g''_{ij} -2(\overline L_r)_i^a(\overline L_r)_{aj},
    \end{split}
    \]
thus
    \[
    g''_{ij} = -2\overline R_{rirj} + 2(\overline L_r)^a_i(\overline L_r)_{aj},
    \]
as claimed.

The third derivative of the metric is given by 
    \begin{equation}
        g'''_{ij} = -2\overline R_{rirj,r} + 2((\overline L_r)^k_i\overline R_{rkrj} + (\overline L_r)^k_j\overline R_{rkri})
    \end{equation}
and can be obtained from $\overline R_{rirj,r}$. First, 
    \[
        \overline \Gamma_{ir,rr}^k = \frac{1}{2}\partial^2_r(g^{ka}g'_{ai}) = \frac{1}{2}\partial_r((g^{ka})'g'_{ai}+ g^{ka}g''_{ai})= \frac{1}{2}((g^{ka})''g'_{ai}+ 2(g^{ka})'g''_{ai} + g^{ka}g'''_{ai})
    \]
Then $\overline \Gamma^k_{ir,rr}g_{kj}$ is given by
\[
\begin{split}
 &\frac{1}{2}(g^{ka})''g'_{ai}g_{kj} + (g^{ka})'g''_{ai}g_{kj}+ \frac{1}{2}g^{ka}g'''_{ai}g_{kj}\\&= (-(\overline L_r)_{ai})(-2(g^{ka})'g'_{kj} -g^{ka}g''_{kj})-(-2\overline R_{rari} + 2(\overline L_r)^b_a(\overline L_r)_{bi})g^{ka}(-2(\overline L_r){kj}) + \frac{1}{2}g'''_{ij}\\&= 2(\overline L_r)_{ai}(-2(\overline L_r)_{kj})(2(\overline L_r)^{ka})+(\overline L_r)^k_i(-2\overline R_{rkrj}+2(\overline L_r)^b_k(\overline L_r)_{bj})\\&\;\;\;\;+2(\overline L_r)^a_j(-2\overline R_{rari}+2(\overline L_r)^b_a(\overline L_r)_{bi}) + \frac{1}{2}g'''_{ij} \\ &= -8(\overline L_r)_{ai}(\overline L_r)_{kj}(\overline L_r)^{ka}+(\overline L_r)^k_i(-2\overline R_{rkrj}+2(\overline L_r)^b_k(\overline L_r)_{bj})\\&\;\;\;\;+2(\overline L_r)^a_j(-2\overline R_{rari}+2(\overline L_r)^b_a(\overline L_r)_{bi})+ \frac{1}{2}g'''_{ij}
\end{split}
\]
If we define $\overline L_r^3$ to be the symmetric two-tensor whose components are 
\[
(\overline L_r^3)_{ij} = (\overline L_r)_{ai}(\overline L_r)_{kj}(\overline L_r)^{ka}=(\overline L_r)^f_i(\overline L_r)^e_j(\overline L_r)_{ef} = (\overline L_r)^e_j(\overline L_r)^a_e(\overline L_r)_{ai},
\]
then we can write
\begin{equation}\label{3rdDerivative-Term1}
\overline \Gamma^k_{ir,rr}g_{kj} = \frac{1}{2}g'''_{ij}-2(\overline L_r)^3_{ij}-2((\overline L_r)^k_i\overline R_{rkrj}+2(\overline L_r)^a_j\overline R_{rari}).
\end{equation}
Next, we have
\begin{equation}\label{3rdDerivative-Term2}
\begin{split}
\overline \Gamma^k_{ir,r}g'_{kj}& = \frac{1}{2}((g^{ka})'g'_{ai}+ g^{ka}g''_{ai})(-2(\overline L_r)_{kj}) \\ &= - (\overline L_r)_{kj}(-4(\overline L_r)^{ka}(\overline L_r)_{ai} + g^{ka}(-2\overline R_{rari}+2(\overline L_r)^b_i(\overline L_r)_{ba})) \\&= 4(\overline L_r)^3_{ij} + 2(\overline L_r)^a_j\overline R_{rari} - 2(\overline L_r)^3_{ij} = 2(\overline L_r)^3_{ij}+ 2(\overline L_r)^a_j\overline R_{rari},
\end{split}
\end{equation}
and
\begin{equation}\label{3rdDerivative-Term3}
\begin{split}
\overline \Gamma_{ir,r}^a\overline \Gamma^k_{ra}g_{kj} &= \frac{1}{2}((g^{ab})'g'_{bi}+ g^{ab}g''_{bi}) (-(\overline L_r)^k_a)g_{kj}\\&=-\frac{1}{2}(\overline L_r)_{aj}(-4(\overline L_r)^{ab}L_{bi} + g^{ab}(-2\overline R_{rbri}+ 2(\overline L_r)^c_b(\overline L_r)_{ci})) \\&= 2(\overline L_r)^3_{ij} + (\overline L_r)^b_j\overline R_{rbri} - (\overline L_r)^3_{ij} = (\overline L_r)^3_{ij} + (\overline L_r)^b_j\overline R_{rbri}.
\end{split}
\end{equation}
We also need
\begin{equation}\label{3rdDerivative-Term4}
\begin{split}
\overline \Gamma_{ir}^a\overline\Gamma_{ra,r}^kg_{kj}&=-(\overline L_r)^a_i(-2(\overline L_r)^k_a(\overline L_r)_{kj} + \frac{1}{2}g''_{aj})= 2(\overline L_r)^3_{ij} - \frac{1}{2}(\overline L_r)^a_i(-2\overline R_{rarj} + 2(\overline L_r)^c_a(\overline L_r)_{cj}) \\&= (\overline L_r)^3_{ij} + (\overline L_r)^a_i\overline R_{rarj},
\end{split}
\end{equation}
and
\begin{equation}\label{3rdDerivative-Term5}
\overline\Gamma_{ir}^a\overline\Gamma_{ra}^kg'_{kj} = (-(\overline L_r)^a_i)(-(\overline L_r)^k_a)(-2(\overline L_r)_{kj}) = -2(\overline L_r)^3_{ij}.
\end{equation}
Putting together (\ref{3rdDerivative-Term1}), (\ref{3rdDerivative-Term2}), (\ref{3rdDerivative-Term3}), (\ref{3rdDerivative-Term4}) and (\ref{3rdDerivative-Term5}) gives
\[
\begin{split}
\overline R_{rirj,r} =& -\frac{1}{2}g'''_{ij}+2(\overline L_r)^3_{ij}+2((\overline L_r)^k_i\overline R_{rkrj}+2(\overline L_r)^a_j\overline R_{rari}) -2(\overline L_r)^3_{ij}\\&- 2(\overline L_r)^a_j\overline R_{rari} -(\overline L_r)^3_{ij} - (\overline L_r)^b_j\overline R_{rbri} - (\overline L_r)^3_{ij} - (\overline L_r)^a_i\overline R_{rarj}+2(\overline L_r)^3_{ij} \\=& -\frac{1}{2}g'''_{ij} +((\overline L_r)^k_i\overline R_{rkrj} + (\overline L_r)^k_j\overline R_{rkri})
\end{split}
\]
That is, 
\[
g'''_{ij} = -2\overline R_{rirj,r} + 2((\overline L_r)^k_i\overline R_{rkrj} + (\overline L_r)^k_j\overline R_{rkri}),
\]
as desired. This finishes the proof.
\end{proof}

\begin{remark}
The derivatives of the inverse components of the metric $g=g_r$ can be computed as follows. First, from 
\[
g^{ib}g_{bc} = \delta^i_c \implies (g^{ib})'g_{bc} + g^{ib}g'_{bc}=0 \implies (g^{ij})' = -g'_{bc}g^{ib}g^{cj}.
\]
Therefore,
\begin{equation}\label{FirstDerivative-Inverse}
    (g^{ij})'= 2(\overline L_r)_{bc}g^{ib}g^{cj} = 2(\overline L_r)^{ij}.
\end{equation}
For the second derivative, we get
\[
\begin{split}
(g^{ij})'' &= -g''_{bc}g^{ib}g^{cj} - g'_{bc}(g^{ib})'g^{cj} - g'_{bc}g^{ib}(g^{cj})'\\&=-(-2\overline R_{rbrc} + 2(\overline L_r)^a_b(\overline L_r)_{ac})g^{ib}g^{cj} - (-2(\overline L_r)_{bc})(2(\overline L_r)^{ib})g^{cj} - (-2(\overline L_r)_{bc})g^{ib}(2(\overline L_r)^{cj}).
\end{split}
\]
That is,
\begin{equation}\label{SecondDerivative-Inverse}
    (g^{ij})'' = 2g^{ib}g^{jc}\overline R_{rbrc} +6(\overline L_r)^{ai}(\overline L_r)_a^j.
\end{equation}
\end{remark}

\begin{proposition}\label{MeanCurvatureExpansion-compactified}
Let $r$ be a special defining function and let $\overline g$ be the associated compactified metric. For $r>0$ small enough, we denote by $\overline H_r$ the mean curvature of the smooth hypersurface $\Sigma_r = \{x\in M: r(x) = r\}$. Then

\begin{equation}
\begin{split}
   \overline H_r &= \hat H + (|\overline L_r|^2 + \overline R_{rr})|_{r=0}\cdot r + \left(2(\overline L_r)^{ij}\overline R_{rirj} + (\overline L_r)^{ai}(\overline L_r)_a^j(\overline L_r)_{ij} + \frac{1}{2}\overline R_{rr,r}\right)|_{r=0}\cdot r^2\\&\hspace{.15in} + O(r^3),
\end{split}
\end{equation}
where $\overline R_{rr}$ denotes the Ricci curvature $(\Ric_{\overline g})_{rr}$.
\end{proposition}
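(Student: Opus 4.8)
The plan is to Taylor-expand the scalar function $\overline H_r = \Tr_{\overline g}(\overline L_r) = g^{ij}(\overline L_r)_{ij}$ in $r$ about $r=0$, so that the coefficients of $r^0$, $r^1$ and $r^2$ are read off from $\overline H_0$, $\overline H_r'|_{r=0}$ and $\tfrac12\overline H_r''|_{r=0}$ respectively. Every ingredient is already in hand from the normal form $\overline g = dr^2 + g_r$: the identity $(\overline L_r)_{ij} = -\tfrac12 g'_{ij}$ from (\ref{FirstDerivative}), the inverse-metric derivative $(g^{ij})' = 2(\overline L_r)^{ij}$ from (\ref{FirstDerivative-Inverse}), and the second-derivative formula $g''_{ij} = -2\overline R_{rirj} + 2(\overline L_r)^a_i(\overline L_r)_{aj}$ from (\ref{SecondDerivative}), which upon halving gives $(\overline L_r)'_{ij} = \overline R_{rirj} - (\overline L_r)^a_i(\overline L_r)_{aj}$. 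The zeroth-order term is immediate, since at $r=0$ one has $\overline H_0 = \hat g^{ij}\hat L_{ij} = \hat H$.

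First I would establish a Riccati-type first-order identity. Differentiating $\overline H_r = g^{ij}(\overline L_r)_{ij}$ in $r$, the inverse-metric piece yields $2(\overline L_r)^{ij}(\overline L_r)_{ij} = 2|\overline L_r|^2$, while $g^{ij}(\overline L_r)'_{ij}$ yields $g^{ij}\overline R_{rirj} - |\overline L_r|^2$. Using that $g^{ij}\overline R_{rirj} = \overline R_{rr}$ — the radial Ricci component, since $\overline R_{rrrr}=0$ and $\overline g$ is block diagonal in these coordinates — the two contributions combine to $\overline H_r' = |\overline L_r|^2 + \overline R_{rr}$. Evaluating at $r=0$ produces exactly the stated coefficient of $r$.

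Next I would differentiate this identity once more to obtain the $r^2$ coefficient. It is cleanest to view $\overline L_r$ as the shape operator $S^i_j = (\overline L_r)^i_j$ and write $|\overline L_r|^2 = \Tr(S^2)$; then $(\overline L_r)'{}^i_j = (S^2)^i_j + g^{ik}\overline R_{rkrj}$, so that $\tfrac{d}{dr}|\overline L_r|^2 = 2\Tr(S^3) + 2(\overline L_r)^{ij}\overline R_{rirj}$ after using the curvature pair-symmetry $\overline R_{rkri} = \overline R_{rirk}$. Differentiating $\overline R_{rr} = g^{ij}\overline R_{rirj}$ contributes a further $2(\overline L_r)^{ij}\overline R_{rirj}$ from $(g^{ij})'$ together with the genuine curvature derivative $g^{ij}\overline R_{rirj,r}$. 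Summing, halving, and evaluating at $r=0$ gives the coefficient of $r^2$ as $2(\overline L_r)^{ij}\overline R_{rirj} + (\overline L_r)^{ai}(\overline L_r)_a^j(\overline L_r)_{ij} + \tfrac12\overline R_{rr,r}$, matching the statement.

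The main obstacle is purely one of bookkeeping: the several cubic contractions of $\overline L_r$ that arise with different index placements and signs must all be identified with the single invariant $\Tr(S^3) = (\overline L_r)^{ai}(\overline L_r)_a^j(\overline L_r)_{ij}$ before they combine to the correct multiple. A related point requiring care is the precise meaning of $\overline R_{rr,r}$: because $g^{ij}$ itself depends on $r$, one must isolate the contribution $2(\overline L_r)^{ij}\overline R_{rirj}$ coming from $\partial_r g^{ij}$ from the intrinsic curvature derivative $g^{ij}\overline R_{rirj,r}$, and it is precisely the latter that the notation $\overline R_{rr,r}$ denotes here. Should the repeated differentiation prove unwieldy, an equivalent route is to substitute the third-order expansion of $g_{ij}$ from Proposition \ref{ExpansionMetricOrder3} directly into $\overline H_r = -\tfrac12 g^{ij}g'_{ij}$ and collect powers of $r$; this trades the Riccati computation for the same curvature-derivative bookkeeping.
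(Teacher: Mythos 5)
Your proposal is correct and follows essentially the same route as the paper: both Taylor-expand $\overline H_r$ about $r=0$, establish the first-order identity $\overline H_r' = |\overline L_r|^2 + \overline R_{rr}$, and differentiate once more, your only (cosmetic) deviation being that you differentiate this Riccati identity directly via the shape operator rather than reassembling $\overline H_r''$ from $(g^{ij})''$ and $g'''_{ij}$ as the paper does. Your reading of $\overline R_{rr,r}$ as the contracted component-wise derivative $g^{ij}\overline R_{rirj,r}$ — rather than $\partial_r\big((\Ric_{\overline g})_{rr}\big)$ — is exactly the convention the paper's proof uses, which is why your coefficient $2(\overline L_r)^{ij}\overline R_{rirj}$ matches the statement.
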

\begin{proof}
The derivative of $\overline H_r$ can be computed from
\[
\begin{split}
\overline H_r' &= -\frac{1}{2}\partial_r(g^{ij}g'_{ij})\\&= -\frac{1}{2}(g^{ij})'g'_{ij} - \frac{1}{2}g^{ij}g''_{ij}\\&= -\frac{1}{2}(2(\overline L_r)^{ij})(-2(\overline L_r)_{ij}) - \frac{1}{2}g^{ij}(-2\overline R_{rirj} + 2 (\overline L_r)^a_i(\overline L_r)_{aj}).
\end{split}
\]
This gives
\begin{equation}\label{MeanCurvature-1Derivative}
\overline H_r' = |\overline L_r|^2+\overline R_{rr} 
\end{equation}
The second derivative of $\overline H_r$ is given by 
\[
\begin{split}
\overline H_r'' =\;& -\frac{1}{2}\partial_r((g^{ij})'g'_{ij}) - \frac{1}{2}\partial_r(g^{ij}g''_{ij})\\ =\;& -\frac{1}{2}(g^{ij})''g'_{ij} - (g^{ij})'g''_{ij} - \frac{1}{2}g^{ij}g'''_{ij} \\ =\;& -\frac{1}{2}(2g^{ib}g^{jc}\overline R_{rbrc} +6(\overline L_r)^{ai}(\overline L_r)_a^j)(-2(\overline L_r)_{ij}) - (2(\overline L_r)^{ij})(-2\overline R_{rirj}+2(\overline L_r)^a_i(\overline L_r)_{aj})\\&-\frac{1}{2}g^{ij}(-2\overline R_{rirj,r} + 2((\overline L_r)^k_i\overline R_{rkrj} + (\overline L_r)^k_j\overline R_{rkri})) \\ =\; & 4(\overline L_r)^{ij}\overline R_{rirj} + 2(\overline L_r)^{ai}(\overline L_r)_a^j(\overline L_r)_{ij} + \overline R_{rr,r}
\end{split}
\]
The result now follows from 
\begin{equation}
    \overline H_r = \hat H + \overline H_0'\cdot r + \frac{1}{2}\overline H''_0\cdot r^2 + O(r^3)
\end{equation}
\end{proof}

In occasions, it would be convenient to work with $H_r$, which is the mean curvature of the smooth hypersurface $\Sigma_r=\{r(p)=r\}$, $r$ small, with respect to the singular metric $g_+$. Since $g_+ = r^{-2}\overline g$, the conformal transformation law yields
\begin{equation}\label{TransformationLawMeanCurvature}
H_r = n+ r\overline H_r.
\end{equation}
This immediately gives us that $H_r$ extends all the way to $\partial M$ and equals $n$ there, that is, $H_0 = n$. The higher order terms in its expansion can be computed from (\ref{TransformationLawMeanCurvature}) and Proposition \ref{MeanCurvatureExpansion-compactified}. 

\begin{corollary}\label{MeanCurvatureExpansion-complete}
    Let $r$ be a special defining function and let $\overline g$ be the associated compactified metric. For $r>0$ small enough, denote by $H_r$ the mean curvature of $\Sigma_r$ with respect to the singular metric $g_+$. Then
    \begin{equation}
        H_r = n+\hat H\cdot r + (|\hat L|^2+ \overline R_{00})\cdot r^2 + O(r^3),
    \end{equation}
     where $\overline R_{00}=(\overline R_{rr})|_{\Sigma}$.
\end{corollary}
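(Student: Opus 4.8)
The plan is to obtain the expansion of $H_r$ directly from the conformal transformation law (\ref{TransformationLawMeanCurvature}) together with the expansion of $\overline H_r$ established in Proposition \ref{MeanCurvatureExpansion-compactified}. The key observation is that passing from the compactified metric $\overline g$ to the singular metric $g_+ = r^{-2}\overline g$ multiplies $\overline H_r$ by $r$, hence shifts every term in the expansion up by one power of $r$; in particular the quadratic term of $\overline H_r$ only affects $H_r$ at order $r^3$ and may be safely absorbed into the remainder.

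First I would record (\ref{TransformationLawMeanCurvature}), namely $H_r = n + r\overline H_r$, and insert the expansion
\[
\overline H_r = \hat H + (|\overline L_r|^2 + \overline R_{rr})|_{r=0}\cdot r + O(r^2)
\]
coming from Proposition \ref{MeanCurvatureExpansion-compactified}, where the order-$r^2$ coefficient has been folded into $O(r^2)$ since, after multiplication by $r$, it contributes only to $O(r^3)$. Multiplying through then yields
\[
H_r = n + \hat H\cdot r + (|\overline L_r|^2 + \overline R_{rr})|_{r=0}\cdot r^2 + O(r^3).
\]
Second, I would identify the order-$r^2$ coefficient by evaluating at $r=0$: the second fundamental form $\overline L_r$ of $\Sigma_r$ reduces to the second fundamental form $\hat L$ of $\Sigma=\Sigma_0$, and the norm is taken with respect to $g_0=\hat g$, so that $|\overline L_r|^2|_{r=0} = |\hat L|^2$; while by definition $\overline R_{rr}|_{r=0} = \overline R_{00}$. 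Substituting these identifications produces exactly $H_r = n + \hat H\cdot r + (|\hat L|^2 + \overline R_{00})\cdot r^2 + O(r^3)$, as claimed.

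Since every ingredient is already in place, there is no genuine obstacle here; the corollary is a one-line consequence of the preceding proposition and transformation law. The only point demanding care is the bookkeeping of powers of $r$ under the multiplication by $r$ in (\ref{TransformationLawMeanCurvature})—ensuring the cubic contribution of $\overline H_r$ is correctly discarded into the $O(r^3)$ remainder and that all surviving coefficients are consistently evaluated at $r=0$.
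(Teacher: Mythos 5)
Your proposal is correct and takes essentially the same route as the paper: both combine the transformation law $H_r = n + r\overline H_r$ with the expansion of $\overline H_r$ from Proposition \ref{MeanCurvatureExpansion-compactified}, the only (cosmetic) difference being that the paper extracts the coefficients by differentiating the relation and evaluating $H'_0$, $H''_0$, whereas you multiply the series by $r$ directly. The bookkeeping of powers and the identifications $|\overline L_r|^2|_{r=0}=|\hat L|^2$, $\overline R_{rr}|_{r=0}=\overline R_{00}$ match the paper's argument exactly.
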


\begin{proof}
As we mentioned, the derivatives of $H_r$ can be computed from (\ref{TransformationLawMeanCurvature}) and Proposition \ref{MeanCurvatureExpansion-compactified}. Indeed,
\begin{equation}
H'_r = \overline H_r + r\overline H'_r,\quad\text{and}\quad H''_r = \overline H'_r + \overline H'_r+ r\overline H''_r.
\end{equation}
Therefore, 
\begin{equation}
    H'_0 = \hat H
\end{equation}
and
\begin{equation}
  H''_0 = 2\overline H'_0 = 2|\hat L|^2+ 2\overline R_{00}.  
\end{equation}
Hence, the expansion of $H_r$ is given by 
\[
\begin{split}
H_r & = n + H'_0\cdot r + \frac{1}{2}H''_0\cdot r^2+O(r^3)\\&= n+\hat H\cdot r + (|\hat L|^2+ \overline R_{00})\cdot r^2 + O(r^3).
\end{split}
\]
\end{proof}

\begin{lemma}\label{SC-BoundaryGeometry}
    Consider $(\Sigma, \hat g)$ as a submanifold of $(\overline M^{n+1}, \overline g)$ of codimension $1$. Then 
    \begin{equation}
        \overline R_{00} = \frac{1}{2}(R_{\overline g}|_{\Sigma} - R_{\hat g} - |\hat L|^2 + \hat H^2).
    \end{equation}
\end{lemma}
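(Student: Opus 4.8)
The plan is to recognize the claimed identity as the doubly-contracted Gauss equation for the hypersurface $(\Sigma,\hat g)$ sitting inside $(\overline M^{n+1},\overline g)$ with unit normal $N=\partial_r$, and to obtain it in two independent steps: an algebraic decomposition of the ambient scalar curvature adapted to the splitting $\overline g = dr^2 + g_r$, followed by the trace of the Gauss equation. The second fundamental form $\hat L = \overline L_0$ and mean curvature $\hat H = \Tr_{\hat g}\hat L$ appearing in the statement are exactly those already introduced in \eqref{FirstDerivative}.

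First I would decompose $R_{\overline g}$ at points of $\Sigma$. Since $\overline g$ is block-diagonal in holographic coordinates ($\overline g_{rr}=1$, $\overline g_{ri}=0$, $\overline g_{ij}=(g_r)_{ij}$), we have $\overline g^{rr}=1$ and $\overline g^{ri}=0$, so $R_{\overline g}=\overline g^{\mu\nu}\overline R_{\mu\nu}=\overline R_{rr}+g^{ij}\overline R_{ij}$. Expanding the tangential Ricci component once more and using the symmetries of the curvature tensor to identify the normal contraction $g^{ij}\overline R_{rirj}=\overline R_{rr}$, this collapses to
\[
R_{\overline g} = 2\,\overline R_{rr} + g^{ik}g^{jl}\overline R_{ijkl}.
\]
Evaluating at $r=0$ and writing $\overline R_{00}=\overline R_{rr}|_\Sigma$ isolates the purely tangential double trace as $g^{ik}g^{jl}\overline R_{ijkl}|_\Sigma = R_{\overline g}|_\Sigma - 2\overline R_{00}$.

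Next I would invoke the Gauss equation for $\Sigma\hookrightarrow\overline M$, which expresses the intrinsic curvature $R^\Sigma_{ijkl}$ of $\hat g$ in terms of the tangential components $\overline R_{ijkl}|_\Sigma$ and a quadratic expression in $\hat L$. Tracing twice with $\hat g^{ik}\hat g^{jl}$ collapses the quadratic term to $\hat H^2 - |\hat L|^2$ and yields
\[
R_{\hat g} = g^{ik}g^{jl}\overline R_{ijkl}|_\Sigma + \hat H^2 - |\hat L|^2.
\]
Substituting the decomposition from the previous step and solving for $\overline R_{00}$ returns exactly $\overline R_{00}=\tfrac12\big(R_{\overline g}|_\Sigma - R_{\hat g} - |\hat L|^2 + \hat H^2\big)$, which is the claim.

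Each step is routine; the only genuine point requiring care is the bookkeeping of signs, since both the Gauss quadratic term and the Ricci/scalar contractions depend on the curvature sign convention fixed by \eqref{TransLawRiemann}. I would pin these down once and for all by testing the identity on a model hypersurface — for instance $\Sigma=\mathbb{S}^n\subset\mathbb{R}^{n+1}$, where $R_{\overline g}=\overline R_{00}=0$, $|\hat L|^2=n$ and $\hat H^2=n^2$, so that the formula correctly returns $R_{\hat g}=n(n-1)$. This calibration selects the sign $+\hat H^2-|\hat L|^2$ in the traced Gauss equation (equivalently, the Gauss equation $R^\Sigma_{ijkl}=\overline R_{ijkl}+\hat L_{ik}\hat L_{jl}-\hat L_{il}\hat L_{jk}$ in the convention of \eqref{TransLawRiemann}) and confirms the stated identity. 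No obstacle arises beyond this sign bookkeeping.
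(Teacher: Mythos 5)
Your proof is correct and takes essentially the same route as the paper: both amount to the doubly-contracted Gauss equation for $\Sigma\hookrightarrow\overline M$ combined with the splitting $R_{\overline g}|_\Sigma = 2\overline R_{00} + \hat g^{ik}\hat g^{jl}\overline R_{ijkl}|_\Sigma$ that comes from the normal form of $\overline g$. The paper merely performs the contractions in the opposite order (contracting the Gauss equation once, identifying $\hat g^{ik}\overline R_{ijkl} = \overline R_{jl} - \overline R_{rjrl}$, then tracing again), arriving at the identical intermediate identity $R_{\overline g}|_\Sigma - 2\overline R_{00} = R_{\hat g} + |\hat L|^2 - \hat H^2$.
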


\begin{proof}
Working in a local adapted frame, the Gauss equation yields
\[
\overline R_{ijkl} = \hat R_{ijkl} + \hat L_{jk}\hat L_{il} - \hat L_{ik}\hat L_{jl},
\]
allowing us to express curvature quantities with respect to the compactified metric in terms of boundary information. Notice that the above formula holds only at boundary points. We will use $\overline R_{ij}$ and $\hat R_{ij}$ for $(\Ric_{\overline g})_{ij}$ and $(\Ric_{\hat g})_{ij}$.

Contracting with the boundary metric gives
\[
\hat g^{ik}\overline R_{ijkl} = \hat R_{jl} + \hat L_{j}^a\hat L_{a l} - \hat H\hat L_{jl}.
\]
Since
\[
\hat g^{ik}\overline R_{ijkl} = \overline R_{jl} - \overline g^{rr}\overline R_{irkr} = \overline R_{jl} - \overline R_{irkr},
\]
we deduce
\[
\overline R_{jl} - \overline R_{rjrl} = \hat R_{jl} + \hat L_{j}^a\hat L_{al} - \hat H\hat L_{jl}.
\]
Contracting once more with the boundary metric, gives 
\[
\hat g^{jl}(\overline R_{jl} - \overline R_{rjrl}) = \hat R + |\hat L|^2 - \hat H^2,
\]
and so 
\begin{equation}\label{BoundaryGeometry-SC}
    R_{\overline g}|_{\Sigma} -2\overline R_{00}= R_{\hat g} + |\hat L|^2 - \hat H^2. 
\end{equation}
Therefore, 
\begin{equation}\label{BoundaryGeometry-Riccirr}
\overline R_{00} = \frac{1}{2}(R_{\overline g}|_{\Sigma} - R_{\hat g} - |\hat L|^2 + \hat H^2).    
\end{equation}
\end{proof}

\begin{remark}
Notice that we could rewrite $H''_0$. Indeed, recall that using Gauss equation gives
\[
\overline R_{00} = \frac{1}{2}(R_{\overline g}|_{\Sigma} - R_{\hat g} - |\hat L|^2 + \hat H^2),
\]
and so
\[
|\hat L|^2+\overline R_{00} = \frac{1}{2}(R_{\overline g}|_{\Sigma} - R_{\hat g} + |\hat L|^2 + \hat H^2).
\]
Therefore, with the setup as in Corollary \ref{MeanCurvatureExpansion-complete}, we have 
\begin{equation}\label{MeanCurvature-SecondDerivative}
        H_r = n+\hat H\cdot r + \frac{1}{2}(R_{\overline g}|_{\Sigma} - R_{\hat g} + |\hat L|^2 + \hat H^2)\cdot r^2 + O(r^3).
    \end{equation}
\end{remark}

Finally, in computing the expansion of $\Ric_{g_+}+ng_+$, we need the expansion of the second fundamental form $\overline L_r$. We state it as our last proposition.

\begin{proposition}\label{2ff-expansion}
    Let $( M^{n+1},g_+)$ be an AH manifold and let $\overline g = r^2g_+$ be a compactification of it by a special defining function $r$. Then 
\begin{equation}\label{2ff-Expansion}
\begin{split}
    (\overline L_r)_{ij} =\;& \hat L_{ij} + (\overline R_{rirj} - (\overline L_r)_i^a(\overline L_r)_{aj})|_{r=0}\cdot r\\&+ \frac{1}{2}(\overline R_{rirj,r} -((\overline L_r)^k_i\overline R_{rkrj} + (\overline L_r)^k_j\overline R_{rkri}))|_{r=0}\cdot r^2+ O_{ij}(r^3).
\end{split}
\end{equation}
\end{proposition}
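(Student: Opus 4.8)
The plan is to differentiate the relation $(\overline L_r)_{ij} = -\tfrac12 g'_{ij}$ established in \eqref{FirstDerivative}, reducing everything to the derivatives of $g_r$ already computed in Proposition \ref{ExpansionMetricOrder3}. Indeed, from $(\overline L_r)_{ij} = -\tfrac12 g'_{ij}$ we immediately get
\[
(\overline L_r)'_{ij} = -\tfrac12 g''_{ij}, \qquad (\overline L_r)''_{ij} = -\tfrac12 g'''_{ij},
\]
so the Taylor expansion of $\overline L_r$ at $r=0$ is governed entirely by the second and third derivatives of the metric, which are precisely \eqref{SecondDerivative} and the formula for $g'''_{ij}$ proved in Proposition \ref{ExpansionMetricOrder3}.

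The key steps, in order, are as follows. First I would record the expansion
\[
(\overline L_r)_{ij} = \hat L_{ij} + (\overline L_r)'_{ij}\big|_{r=0}\cdot r + \tfrac12 (\overline L_r)''_{ij}\big|_{r=0}\cdot r^2 + O_{ij}(r^3),
\]
using that $(\overline L_r)_{ij}\big|_{r=0} = \hat L_{ij}$. Next, substituting $(\overline L_r)'_{ij} = -\tfrac12 g''_{ij}$ together with \eqref{SecondDerivative} gives the first-order coefficient
\[
(\overline L_r)'_{ij}\big|_{r=0} = \bigl(\overline R_{rirj} - (\overline L_r)^a_i(\overline L_r)_{aj}\bigr)\big|_{r=0},
\]
which matches the claimed $r$-coefficient. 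Finally, substituting $(\overline L_r)''_{ij} = -\tfrac12 g'''_{ij}$ with the expression
\[
g'''_{ij} = -2\overline R_{rirj,r} + 2\bigl((\overline L_r)^k_i\overline R_{rkrj} + (\overline L_r)^k_j\overline R_{rkri}\bigr)
\]
from Proposition \ref{ExpansionMetricOrder3} yields, after multiplying by $-\tfrac14$ and evaluating at $r=0$, the stated $r^2$-coefficient $\tfrac12\bigl(\overline R_{rirj,r} - ((\overline L_r)^k_i\overline R_{rkrj} + (\overline L_r)^k_j\overline R_{rkri})\bigr)\big|_{r=0}$.

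In truth there is no real obstacle here: the proposition is a direct corollary of Proposition \ref{ExpansionMetricOrder3} via the single identity $\overline L_r = -\tfrac12 g'_r$, and the only thing to watch is the bookkeeping of the constant factors (the $-\tfrac12$ from $\overline L_r = -\tfrac12 g'$ combining with the $\tfrac12$ from the Taylor coefficient, and the $-2$'s and $+2$'s appearing in the metric-derivative formulas). Since all the heavy curvature computation was already carried out in establishing \eqref{SecondDerivative} and the formula for $g'''_{ij}$, the proof is essentially one line of substitution followed by collecting terms. I would therefore keep the argument brief, simply citing Proposition \ref{ExpansionMetricOrder3} and performing the differentiation of $\overline L_r = -\tfrac12 g'_r$ explicitly to fix the coefficients.
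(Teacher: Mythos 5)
Your proposal is correct and follows exactly the paper's own argument: differentiate the identity $(\overline L_r)_{ij} = -\tfrac12 g'_{ij}$ and substitute the formulas for $g''_{ij}$ and $g'''_{ij}$ from Proposition \ref{ExpansionMetricOrder3} into the Taylor expansion at $r=0$. The coefficient bookkeeping you describe (the $-\tfrac12$ combining with the Taylor $\tfrac12$ to give $-\tfrac14$ against the $\mp 2$'s) is exactly right, so nothing is missing.
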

\begin{proof}
    By definition, we know $\overline L_0 = \hat L$ is the second fundamental form of $(\Sigma, \hat g)$. Its first derivative is given by 
    \[
(\overline L_r)'_{ij} = \left(-\frac{1}{2}g'_{ij}\right)' = \overline R_{rirj} - (\overline L_r)_i^a(\overline L_r)_{aj},
\]
while the second derivative is
\[
(\overline L_r)''_{ij} = \left(-\frac{1}{2}g'_{ij}\right)'' = \overline R_{rirj,r} -((\overline L_r)^k_i\overline R_{rkrj} + (\overline L_r)^k_j\overline R_{rkri}).
\]
This gives the result.
\end{proof}

\section{Weakly Poincar\'e--Einstein Manifolds with $\Ric_{g_+}+ng_+\ge 0$ }\label{WPE+Ric-Manifolds}

Our setup throughout this section follows the same structure as that outlined at the beginning of Section \ref{Preliminaries}. To prove Theorem \ref{WPE+RiciffScDecayCond}, we will require both the expansions of $\Ric_{g_+} + ng_+$ and $R_{g_+}$. To achieve this, it is necessary to compute both the Laplacian and the Hessian of the special defining function $r$ with respect to the compactified metric. In local coordinates, $\Delta_{\overline g}$ acting on a $C^2$-function is
\[
\begin{split}
    \Delta_{\overline g}f &= \overline g^{\alpha\beta}(\partial^2_{\alpha\beta} f - \overline{\Gamma}^\gamma_{\alpha\beta}\partial_\gamma f) = \overline g^{rr}f'' + \overline g^{ij}(\partial^2_{ij}f - \overline{\Gamma}_{ij}^\alpha \partial_\alpha f)\\ &= f'' + g_r^{ij}(\partial^2_{ij}f - \overline{\Gamma}_{ij}^k \partial_k f) -g_r^{ij}\overline{\Gamma}_{ij}^rf' = f'' + \Delta_{g_r}f - \overline H_r f'
\end{split}
\]
In particular,
    \begin{equation}\label{LaplaceOf-r}
    \Delta_{\overline g} r = -\overline H_r.
    \end{equation}
More generally, the components of the hessian of the special defining function with respect to the compactified metric are given by 
\begin{equation}
(\nabla^2_{\overline g} r)_{ij} = -\overline{\Gamma}_{ij}^r = -(\overline{L}_r)_{ij}\quad\text{ and }\quad (\nabla^2_{\overline g} r)_{rj} = -\overline{\Gamma}_{rj}^r= 0 = \overline{\Gamma}^r_{rr} = 
 (\nabla^2_{\overline g}r)_{rr}.
\end{equation}

We are now in position to prove the next proposition. 

\begin{proposition}\label{RicciCurvatureExpansion1}
Assume $n+1\ge 3$. Let $( M^{n+1},g_+)$ be an asymptotically hyperbolic manifold, and let $\overline g = r^2g_+$ be a compactification of it by a special defining function $r$. The expansion of $(\Ric_{g_+} + ng_+)_{ij}$ on $\Sigma\times [0,\epsilon_o)_r$ at $r=0$ is given by
\begin{equation}\label{RicciCurvatureExpansion2}
    (-(n-1)\hat L_{ij} - \hat H \hat g_{ij})\cdot r^{-1} + (\overline R_{ij} -(n-1)(\overline R_{0i0j} - (\hat L)^a_i (\hat L)_{aj}) - (|\hat L|^2+\overline R_{00})\hat g_{ij}) + O_{ij}(r).
\end{equation}
On the other hand, the expansion of $(Ric_{g_+} + ng_+)_{rr}$ at $r=0$ is given by 
\begin{equation}\label{RicciCurvatureExpansion-3}
 -\hat H\cdot r^{-1} -|\hat L|^2 + O(r).
\end{equation}
Furthermore, (\ref{RicciCurvatureExpansion2}) can be rewritten as
\begin{equation}\label{RicciCurvatureExpansion4}
\begin{split}
    &\left(-(n-1)\mathring{\hat L} - \frac{2n-1}{n}\hat H\hat g\right)\cdot r^{-1} \\&\hspace{.5in}+\left(-(n-1)\overline{W}_{0i0j} + \left(\hat R - \frac{n-1}{n}\overline{R}\right)\overline g_{ij}  + (n-1)(\hat L)^a_i (\hat L)_{aj} - \hat H^2\cdot\hat g_{ij}\right)+O_{ij}(r)
\end{split}
\end{equation}
\end{proposition}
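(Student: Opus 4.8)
The plan is to read off both expansions from the conformal transformation law for the Ricci tensor applied to $g_+ = r^{-2}\overline g$, i.e. to the conformal factor $\phi = -\log r$. In dimension $N=n+1$ this law reads
\[
\Ric_{g_+} = \Ric_{\overline g} - (n-1)\left(\nabla^2_{\overline g}\phi - d\phi\otimes d\phi\right) - \left(\Delta_{\overline g}\phi + (n-1)|\nabla_{\overline g}\phi|^2\right)\overline g,
\]
and the factor $n-1=N-2$ is exactly why the hypothesis $n+1\ge 3$ appears. Since $r$ is special, $|\nabla_{\overline g}r|^2\equiv 1$, and the Hessian and Laplacian of $r$ were already recorded: $(\nabla^2_{\overline g}r)_{ij} = -(\overline L_r)_{ij}$, $(\nabla^2_{\overline g}r)_{r\alpha}=0$, and $\Delta_{\overline g}r = -\overline H_r$. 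Differentiating $\phi=-\log r$ as a function of $r$ alone then shows that $\nabla^2_{\overline g}\phi - d\phi\otimes d\phi$ has only the tangential components $r^{-1}(\overline L_r)_{ij}$, while $\Delta_{\overline g}\phi + (n-1)|\nabla\phi|^2 = n\,r^{-2} + r^{-1}\overline H_r$. The decisive observation is that adding $n g_+ = n\,r^{-2}\overline g$ cancels precisely the $n\,r^{-2}$ scalar term — this is where the asymptotically hyperbolic normalization enters — leaving the two exact identities
\[
(\Ric_{g_+}+ n g_+)_{ij} = \overline R_{ij} - (n-1)\frac{(\overline L_r)_{ij}}{r} - \frac{\overline H_r}{r}(g_r)_{ij}, \qquad (\Ric_{g_+}+ n g_+)_{rr} = \overline R_{rr} - \frac{\overline H_r}{r}.
\]

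With these in hand, the next step is purely to substitute the $r$-expansions from Section \ref{Preliminaries}: the metric expansion $(g_r)_{ij} = \hat g_{ij} - 2\hat L_{ij}\,r + O(r^2)$ from Proposition \ref{ExpansionMetricOrder3}, the second fundamental form expansion $(\overline L_r)_{ij} = \hat L_{ij} + (\overline R_{0i0j} - \hat L_i^a\hat L_{aj})\,r + O(r^2)$ from Proposition \ref{2ff-expansion}, and the mean curvature expansion $\overline H_r = \hat H + (|\hat L|^2 + \overline R_{00})\,r + O(r^2)$ from Proposition \ref{MeanCurvatureExpansion-compactified}. Dividing the last two by $r$ generates the $r^{-1}$ poles, and collecting the coefficients of $r^{-1}$ and $r^0$ produces (\ref{RicciCurvatureExpansion2}); the $rr$-identity collapses at once to $-\hat H\,r^{-1} - |\hat L|^2 + O(r)$, giving (\ref{RicciCurvatureExpansion-3}) after using $\overline R_{rr}=\overline R_{00}+O(r)$. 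I would organize this as careful power-counting, paying particular attention to the cross terms produced by the product $\tfrac{\overline H_r}{r}(g_r)_{ij}$ (the pole of $\tfrac{\overline H_r}{r}$ against the linear term of $g_r$), since that product is where an error in the $r^0$ coefficient is most easily introduced; note that the $rr$-component is free of such cross terms because it multiplies the constant $\overline g_{rr}=1$.

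Finally, to reach the manifestly conformal form (\ref{RicciCurvatureExpansion4}) I would rewrite $\overline R_{0i0j}$ through the Weyl decomposition of $\overline g$ in dimension $n+1$,
\[
\overline R_{0i0j} = \overline{W}_{0i0j} + \overline P_{00}\hat g_{ij} + \overline P_{ij}, \qquad \overline P_{\alpha\beta} = \frac{1}{n-1}\Big(\overline R_{\alpha\beta} - \frac{\overline R}{2n}\overline g_{\alpha\beta}\Big),
\]
so that the bare $\overline R_{ij}$ term in (\ref{RicciCurvatureExpansion2}) is absorbed by $-(n-1)\overline P_{ij}$ and the coefficient of $\overline{W}_{0i0j}$ becomes $-(n-1)$. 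The leftover scalar coefficient in front of $\hat g_{ij}$ is then turned into intrinsic/extrinsic boundary data by the Gauss-equation identity of Lemma \ref{SC-BoundaryGeometry}, namely $2\overline R_{00} = \overline R|_\Sigma - \hat R - |\hat L|^2 + \hat H^2$, which trades $\overline R_{00}$ and the surviving multiple of $\overline R$ for $\hat R$, $|\hat L|^2$ and $\hat H^2$; regrouping the $\hat L_i^a\hat L_{aj}$ terms yields exactly (\ref{RicciCurvatureExpansion4}), while a simple trace splitting $\hat L = \mathring{\hat L} + \tfrac{\hat H}{n}\hat g$ recasts the $r^{-1}$ pole into the stated form. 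I expect the main obstacle to be this last bookkeeping stage rather than the mechanical expansions: keeping the Schouten/Weyl normalization and the Gauss substitution mutually consistent so that all the a priori uncontrolled quantities $\overline R_{ij}$, $\overline R_{00}$, $\overline R$ recombine cleanly into boundary invariants.
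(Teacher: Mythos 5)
Your proposal follows the paper's own proof essentially verbatim: the same conformal transformation law for the Ricci tensor under $g_+=r^{-2}\overline g$, the same two exact identities
\[
(\Ric_{g_+}+ng_+)_{ij} = \overline R_{ij} - (n-1)r^{-1}(\overline L_r)_{ij} - r^{-1}\overline H_r (g_r)_{ij}, \qquad (\Ric_{g_+}+ng_+)_{rr} = \overline R_{rr} - r^{-1}\overline H_r,
\]
the same substitution of the expansions from Propositions \ref{ExpansionMetricOrder3}, \ref{2ff-expansion} and \ref{MeanCurvatureExpansion-compactified}, and the same Schouten--Weyl decomposition combined with Lemma \ref{SC-BoundaryGeometry} to pass to (\ref{RicciCurvatureExpansion4}). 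Methodologically it is the paper's argument.

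There is, however, one substantive point, and it concerns precisely the cross term you single out as the danger spot. Expanding that product carefully,
\[
-r^{-1}\overline H_r (g_r)_{ij} = -\hat H\hat g_{ij}\cdot r^{-1} - (|\hat L|^2+\overline R_{00})\hat g_{ij} + 2\hat H\hat L_{ij} + O_{ij}(r),
\]
since the pole $\hat H\, r^{-1}$ multiplies the linear term $-2\hat L_{ij}\,r$ of $g_r$. Hence the $r^0$ coefficient of $(\Ric_{g_+}+ng_+)_{ij}$ contains an additional $+2\hat H\hat L_{ij}$ which is absent from (\ref{RicciCurvatureExpansion2}) (and, after decomposition, from (\ref{RicciCurvatureExpansion4})). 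That this term must be present can be checked independently: tracing the claimed expansion with $g_r^{ij}=\hat g^{ij}+2\hat L^{ij}r+O(r^2)$, adding the $rr$-component, and comparing with Proposition \ref{ScalarCurvature-Expansion} leaves a mismatch of exactly $2\hat H^2=\hat g^{ij}(2\hat H\hat L_{ij})$ if the term is dropped. So your procedure, executed to the letter, establishes the proposition only up to this correction; the same term is silently missing from the paper's own display, which is harmless for its applications (Theorems \ref{WPE+RiciffScDecayCond} and \ref{WPE-Characterization} invoke the proposition only where $\hat H=0$ or $\hat L=0$, so $2\hat H\hat L_{ij}$ vanishes), but you should record the corrected $r^0$ coefficient rather than assert that the bookkeeping reproduces (\ref{RicciCurvatureExpansion2}) exactly as stated.
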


\begin{proof}
Recall that we use $g_{ij}$ for $(g_r)_{ij}$. Under the conformal change $\overline g\mapsto g_+ = r^{-2}\overline g$, the Ricci curvature transforms as 
\begin{equation}\label{TransformationLawRicci}
    \begin{split}
    \Ric_{g_+} &= \Ric_{\overline g} - (n-1)((\nabla^2_{\overline g}(-\ln r)) - r^{-2}dr\otimes dr)-(\Delta_{\overline g}(-\ln r) + r^{-2}(n-1)|dr|^2_{\overline g})\overline g\\ &= \Ric_{\overline g} -(n-1)((\nabla^2_{\overline g}(-\ln r)) - r^{-2}dr\otimes dr) - (r^{-1}\overline H_r +nr^{-2})\overline g
    \end{split}
    \end{equation}
Recall that we use $R_{\alpha\beta}$ and $\overline R_{\alpha\beta}$ to denote the components of $\Ric_{g_+}$ and $\Ric_{\overline g}$, respectively. Starting with the tangential components, we derive
\begin{equation}\label{TransformationLawRicciTangential}
    \begin{split}
    R_{ij} &= \overline R_{ij} - (n-1)(r^{-1}\overline \Gamma_{ij}^r) - (r^{-1}\overline H_r + nr^{-2})\overline g_{ij} \\ &= \overline R_{ij}-(n-1)r^{-1}(\overline L_r)_{ij} - (r^{-1}\overline H_r + nr^{-2}) \overline g_{ij}.
    \end{split}
    \end{equation}
On the other hand,
    \begin{equation}\label{TransformationLawRicciNormal}
    \begin{split}
        R_{rr} &= \overline R_{rr} -(n-1)r^{-2}+ (n-1)r^{-2} - r^{-1}\overline H_r - nr^{-2}\\&=\overline R_{rr}-r^{-1}\overline H_r - nr^{-2}.
    \end{split}
    \end{equation}
    Rewriting  (\ref{TransformationLawRicciTangential}) and (\ref{TransformationLawRicciNormal}), we obtain 
    \begin{equation}
         (\Ric_{g_+}+ng_+)_{ij} = (\Ric_{\overline g})_{ij}-(n-1)r^{-1}(\overline L_r)_{ij} - r^{-1}\overline H_r g_{ij}
    \end{equation}
    and 
    \begin{equation}
        (\Ric_{g_+}+ng_+)_{rr} =(\Ric_{\overline g})_{rr}-r^{-1}\overline H_r,
    \end{equation}
    where we have used that $\overline g_{ij} = g_{ij}$.
    Expansions (\ref{RicciCurvatureExpansion2}) and (\ref{RicciCurvatureExpansion-3}) now follow from Proposition \ref{MeanCurvatureExpansion-compactified}, Proposition \ref{2ff-expansion} and Proposition \ref{ExpansionMetricOrder3}.

    We proceed with the proof of (\ref{RicciCurvatureExpansion4}). Notice that 
\[
\begin{split}
\overline{R}_{0i0j} &= \overline{W}_{0i0j} + \overline{P}_{ij} + \overline{P}_{00}\overline g_{ij} \\&=\overline{W}_{0i0j} + \frac{\overline R_{ij}}{n-1} - \frac{\overline R}{2n(n-1)}\overline g_{ij} + \frac{\overline R_{00}}{n-1}\overline g_{ij} - \frac{\overline R}{2n(n-1)}\overline g_{ij},
\end{split}
\]
thus
\[
\overline{R}_{ij}-(n-1)\overline{R}_{0i0j}  = -(n-1)\overline{W}_{0i0j} + \frac{\overline R}{n}\overline g_{ij} - \overline{R}_{00}\overline g_{ij}.
\]
Recall that thanks to Lemma \ref{SC-BoundaryGeometry} we have $-2\overline{R}_{00} = -\overline{R} + \hat R + |\hat L|^2 - \hat H^2$ on $\Sigma$. Therefore, the zeroth order term in the expansion of $(\Ric_{g_+}+ng_+)_{ij}$ can be rewritten as
\[
\begin{split}
&-(n-1)\overline{W}_{0i0j} + \frac{\overline R}{n}\overline g_{ij} - 2\overline{R}_{00}\overline g_{ij} + (n-1)(\hat L)^a_i (\hat L)_{aj} - |\hat L|^2\hat g_{ij}\\&=-(n-1)\overline{W}_{0i0j} + \left(\hat R - \frac{n-1}{n}\overline{R}\right)\overline g_{ij}  + (n-1)(\hat L)^a_i (\hat L)_{aj} - \hat H^2\hat g_{ij}
\end{split}
\]
As for the coefficient of $r^{-1}$, notice that $\mathring{\hat L} = \hat L - \frac{\hat H}{n}\hat g$, thus 
\[
-(n-1)\hat L - \hat H\hat g = -(n-1)\mathring{\hat L} - \frac{2n-1}{n}\hat H\hat g
\]
This finishes the proof.
\end{proof}

We would like to point out that higher order terms of the Ricci tensor could be computed using Proposition \ref{MeanCurvatureExpansion-compactified}, Proposition \ref{2ff-expansion} and Proposition \ref{ExpansionMetricOrder3}, but we would not need it in the current work. Let us now move onto the asymptotics of the scalar curvature. We again look at the transformation law under conformal changes for $R_{g_+}$ plus the expansion of the mean curvature vector.

\begin{proposition}\label{ScalarCurvature-Expansion}
    Assume $n+1\ge 3$. Let $( M^{n+1},g_+)$ be an asymptotically hyperbolic manifold, and let $\overline g = r^2g_+$ be a compactification of it by a special defining function $r$. The expansion of $R_{g_+}+n(n+1)$ on $\Sigma\times [0,\epsilon_o)_r$ at $r=0$ is given by 
    \begin{equation}
        R_{g_+} +n(n+1) =[-2n\hat H]\cdot r + [-(n-1)R_{\overline g}|_{\Sigma} +nR_{\hat g} -n(|\hat L|^2 + \hat H^2)]\cdot r^2 + O(r^3).
    \end{equation}
\end{proposition}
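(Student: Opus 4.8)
The plan is to apply the conformal transformation law for the scalar curvature to the change $g_+ = r^{-2}\overline g = e^{2\omega}\overline g$ with $\omega = -\ln r$, and then to feed in the mean curvature expansion already established in Proposition~\ref{MeanCurvatureExpansion-compactified} together with the boundary identity of Lemma~\ref{SC-BoundaryGeometry}. In dimension $n+1$ the transformation law reads
\[
R_{g_+} = r^2\left(R_{\overline g} - 2n\,\Delta_{\overline g}(-\ln r) - n(n-1)\,|\nabla_{\overline g}(-\ln r)|^2_{\overline g}\right),
\]
where $\Delta_{\overline g}$ is the Laplace--Beltrami operator in the convention of Section~\ref{Preliminaries}.

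First I would compute the two differential terms. Since $r$ is a special defining function we have $|dr|^2_{\overline g}\equiv 1$ in the collar, so $|\nabla_{\overline g}(-\ln r)|^2_{\overline g} = r^{-2}$; and using $\Delta_{\overline g}r = -\overline H_r$ from (\ref{LaplaceOf-r}) together with $\Delta_{\overline g}(\ln r) = r^{-1}\Delta_{\overline g}r - r^{-2}|dr|^2_{\overline g}$ one obtains $\Delta_{\overline g}(-\ln r) = r^{-1}\overline H_r + r^{-2}$. The key observation is that the two genuinely singular ($r^{-2}$) contributions combine with the constants to produce exactly $-n(n+1)$: substituting gives
\[
R_{g_+} = r^2 R_{\overline g} - 2nr\overline H_r - 2n - n(n-1) = r^2 R_{\overline g} - 2nr\overline H_r - n(n+1),
\]
so that
\[
R_{g_+} + n(n+1) = r^2 R_{\overline g} - 2nr\overline H_r.
\]
This is the crucial simplification: the apparent divergence cancels and the remaining expression is manifestly $O(r)$.

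Next I would expand the right-hand side. From Proposition~\ref{MeanCurvatureExpansion-compactified}, $\overline H_r = \hat H + (|\hat L|^2 + \overline R_{00})\,r + O(r^2)$, whence $-2nr\overline H_r = -2n\hat H\, r - 2n(|\hat L|^2 + \overline R_{00})\,r^2 + O(r^3)$; and $r^2 R_{\overline g} = R_{\overline g}|_\Sigma\, r^2 + O(r^3)$. Collecting the $r$- and $r^2$-coefficients yields
\[
R_{g_+} + n(n+1) = -2n\hat H\, r + \left(R_{\overline g}|_\Sigma - 2n(|\hat L|^2 + \overline R_{00})\right) r^2 + O(r^3).
\]

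Finally I would rewrite the $r^2$-coefficient using Lemma~\ref{SC-BoundaryGeometry}, namely $2\overline R_{00} = R_{\overline g}|_\Sigma - R_{\hat g} - |\hat L|^2 + \hat H^2$. Substituting and simplifying collapses the coefficient to $-(n-1)R_{\overline g}|_\Sigma + n R_{\hat g} - n(|\hat L|^2 + \hat H^2)$, which is exactly the claimed expansion. I do not expect a genuine obstacle here; the only point requiring care is the bookkeeping of the singular terms in the first step, where one must verify that the $r^{-2}$ pieces cancel precisely against the constant $-n(n+1)$ rather than leaving a residual divergence, and then track the substitution from Lemma~\ref{SC-BoundaryGeometry} carefully so that the $|\hat L|^2$ and $R_{\overline g}|_\Sigma$ terms combine with the correct signs.
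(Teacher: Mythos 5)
Your proof is correct and takes essentially the same route as the paper: both reduce the statement to the identity $R_{g_+} = r^2 R_{\overline g} - 2nr\overline H_r - n(n+1)$ (the paper's \eqref{ScalarCurvature-ConformalChange}) and then conclude by expanding $\overline H_r$ via Proposition~\ref{MeanCurvatureExpansion-compactified} and substituting Lemma~\ref{SC-BoundaryGeometry}. The only cosmetic difference is that you derive this identity from the scalar-curvature conformal transformation law (together with $\Delta_{\overline g}r=-\overline H_r$ and $|dr|_{\overline g}^2=1$), whereas the paper obtains it by tracing its componentwise Ricci transformation laws \eqref{TransformationLawRicciTangential}--\eqref{TransformationLawRicciNormal}; these are the same computation.
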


\begin{proof}
    In order to compute this expansion, we first conformally change the metric by $g_+\mapsto r^{-2}\overline g$ and obtain 
\[
\begin{split}
R_{g_+} & = g_+^{\alpha\beta}(\Ric_{g_+})_{\alpha\beta} = r^2\overline g^{\alpha\beta}(\Ric_{g_+})_{\alpha\beta} = r^2\overline g^{ij}(\Ric_{g_+})_{ij} + r^2\overline g^{rr}(\Ric_{g_+})_{rr} \\&= r^2\overline g^{ij}(\overline R_{ij}-(n-1)r^{-1}(\overline L_r)_{ij} - (r^{-1}\overline H_r + nr^{-2}) \overline g_{ij})+r^2(\overline R_{rr}-r^{-1}\overline H_r - nr^{-2}) \\ &= r^2\overline R - r(n-1)\overline H_r -nr^2(r^{-1}\overline H_r + nr^{-2}) - r\overline H_r - n = r^2\overline R -2nr\overline H_r -n(n+1)
\end{split}
\]
that is, 
\begin{equation}\label{ScalarCurvature-ConformalChange}
R_{g_+} = r^2\overline R -2nr\overline H_r -n(n+1). 
\end{equation}
Using Proposition \ref{MeanCurvatureExpansion-compactified} and Lemma \ref{SC-BoundaryGeometry}, we deduce
\[
\begin{split}
R_{g_+}+n(n+1) &= r^2 \overline R - 2nr( \hat H + (|\overline L_r|^2 + \overline R_{rr})|_{r=0}\cdot r+O(r^2)) \\&= r^2(\overline R|_{\Sigma} + O(r)) - 2nr\hat H -nr^2 (\overline R|_{\Sigma} - \hat R + |\hat L|^2 + \hat H^2) + O(r^3),
\end{split}
\]
and the result follows.
\end{proof}

As an immediate consequence of Proposition \ref{ScalarCurvature-Expansion},
\begin{equation}\label{ScalarCurvature-rsquared}
\frac{R_{g_+} +n(n+1)}{r^2} =[-2n\hat H]\cdot r^{-1}+ [-(n-1)\overline R|_\Sigma +n\hat R -n(|\hat L|^2 + \hat H^2)] + O(r)
\end{equation}
Therefore, $R_{g_+}+n(n+1) = o(r^2)$ if and only if $\hat H = 0$, that is, if $\Sigma$ is minimal, and  \[-(n-1)\overline R|_\Sigma +n\hat R -n|\hat L|^2=0.\] In this case, from (\ref{MeanCurvature-SecondDerivative}) we obtain
\[
H''_0 = \overline R|_\Sigma - \hat R + |\hat L|^2 = \frac{n-(n-1)}{n-1}\hat R + \frac{(n-1)-n}{n-1}|\hat L|^2  = \frac{1}{n-1}\hat R -\frac{1}{n-1}|\hat L|^2.
\]
Therefore, 
\begin{equation}\label{MeanCurvature-2ndDerivative}
H''_0 \le \frac{1}{n-1}\hat R, 
\end{equation}
with equality if and only if $|\hat L|^2 = 0$, that is, if the boundary is totally geodesic. We would like to remark that inequality (\ref{MeanCurvature-2ndDerivative}) is derived in \cite{HijaziOussama2020TCCo} (see Proposition 2), but the equality case is not discussed. Finally, if we further assume that $\Ric_{g_+}+ng_+\ge 0$ in $\Sigma\times [0,\epsilon_o)_r$, then it follows from (\ref{RicciCurvatureExpansion-3}) that $\Sigma$ is totally geodesic, and so $\hat R = \frac{n-1}{n} \overline R|_{\Sigma}$. 

The next two propositions contain the proof of Theorem \ref{WPE+RiciffScDecayCond}.
\begin{proposition}\label{WPEImply}
    Assume $n+1>3$. Let $(M^{n+1},g_+)$ be an AH manifold, and let $\overline g = r^2g_+$ be a compactification of it by a special defining function $r$. If $g_{(1)} = 0$ and $g_{(2)} = -\hat P$, that is, if $(M^{n+1},g_+)$ is WPE, then $R_{g_+}+n(n+1) = o(r^2)$. 
\end{proposition}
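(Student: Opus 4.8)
The plan is to translate the two defining conditions of the WPE hypothesis through the metric expansion of Proposition \ref{ExpansionMetricOrder3}, and then feed the resulting curvature constraints into the scalar curvature expansion (\ref{ScalarCurvature-rsquared}). Reading off the coefficients there, to prove $R_{g_+}+n(n+1)=o(r^2)$ it suffices to show that the coefficient of $r$, namely $-2n\hat H$, and the coefficient of $r^2$, namely $-(n-1)\overline R|_\Sigma + n\hat R - n(|\hat L|^2+\hat H^2)$, both vanish.

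First I would unpack $g_{(1)}=0$. By Proposition \ref{ExpansionMetricOrder3} the first-order term is $g_{(1)}=-2\hat L$, so $g_{(1)}=0$ forces $\hat L=0$; in particular $\Sigma$ is totally geodesic, $\hat H=0$ and $|\hat L|^2=0$. This immediately kills the $r$-coefficient and removes the $|\hat L|^2+\hat H^2$ contribution from the $r^2$-coefficient, reducing the task to proving the single scalar identity $n\hat R=(n-1)\overline R|_\Sigma$.

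Next I would exploit $g_{(2)}=-\hat P$. Setting $\hat L=0$ in the second-order term of (\ref{ExpansionMetric}) gives $g_{(2)}=-\overline R_{0i0j}$, so the WPE condition reads $\overline R_{0i0j}=(\hat P)_{ij}$ pointwise on $\Sigma$. The key step is then to take the $\hat g$-trace of this tensorial identity. Since $\overline R_{00}=\hat g^{ij}\overline R_{0i0j}$ and, using $\hat g^{ij}\hat g_{ij}=n$ together with the definition (\ref{SchoutenTensor}) of the Schouten tensor, one computes
\[
\hat g^{ij}(\hat P)_{ij} = \frac{1}{n-2}\left(\hat R - \frac{n\,\hat R}{2(n-1)}\right) = \frac{\hat R}{2(n-1)},
\]
so that $\overline R_{00} = \tfrac{\hat R}{2(n-1)}$. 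Here the hypothesis $n+1>3$ is exactly what makes $\hat P$ (hence this manipulation) well-defined.

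Finally I would combine this with Lemma \ref{SC-BoundaryGeometry}, which in the totally geodesic case $\hat L=0$, $\hat H=0$ reduces to $\overline R_{00}=\tfrac12(\overline R|_\Sigma-\hat R)$. Equating the two expressions for $\overline R_{00}$ yields $\tfrac{\hat R}{n-1}=\overline R|_\Sigma-\hat R$, i.e. $n\hat R=(n-1)\overline R|_\Sigma$, which is precisely the vanishing of the $r^2$-coefficient. Thus both low-order coefficients in (\ref{ScalarCurvature-rsquared}) vanish and $R_{g_+}+n(n+1)=O(r^3)=o(r^2)$, as desired. I do not anticipate a genuine obstacle here: once $\hat L=0$ is extracted from $g_{(1)}=0$, the whole argument is the bookkeeping trace computation above, and the only subtlety is keeping the Schouten-tensor dimensional constants straight, which is what forces the $n+1>3$ assumption.
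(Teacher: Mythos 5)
Your proposal is correct and follows essentially the same route as the paper: reduce via the scalar curvature expansion (\ref{ScalarCurvature-rsquared}) to the single identity $n\hat R=(n-1)\overline R|_\Sigma$, translate WPE into $\overline R_{0i0j}=\hat P_{ij}$ using Proposition \ref{ExpansionMetricOrder3}, trace, and close with Lemma \ref{SC-BoundaryGeometry}. The only (harmless) difference is that you trace $\overline R_{0i0j}=\hat P_{ij}$ directly via $\hat g^{ij}\overline R_{0i0j}=\overline R_{00}$, whereas the paper first rewrites $\overline R_{0i0j}=\overline R_{ij}-\hat R_{ij}$ by the Gauss equation and then traces; both computations land on the same scalar relation.
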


\begin{proof}
    Recall that, thanks to Proposition \ref{ExpansionMetricOrder3}, we have
    \[
    g_{(1)} = -2\hat L \quad\text{ and }\quad (g_{(2)})_{ij} = -\overline R_{0i0j} + (\hat L)_i^a \hat L_{aj}.
    \]
    It is clear then that $g_{(1)} = 0$ if and only if $\Sigma$ is totally geodesic. Therefore, (\ref{ScalarCurvature-rsquared}) gives
    \[
    \frac{R_{g_+} +n(n+1)}{r^2} = [-(n-1)\overline R|_\Sigma +n\hat R] + O(r),
    \]
    thus it is sufficient to show that $(n-1)\overline R = n\hat R$ at boundary points. 
    
    To this end, we proceed as follows. From Gauss' equation and the fact that the boundary is totally geodesic, we deduce
    \[
        \overline R_{ij} = \overline g^{ab}\overline R_{aibj} + \overline R_{0i0j}= \overline g^{ab}\hat R_{aibj}+\overline R_{0i0j} = \hat R_{ij} + \overline R_{0i0j},
    \]
    that is,
    \begin{equation}\label{RicciDifference}
        \overline R_{0i0j} = \overline R_{ij} - \hat R_{ij}.
    \end{equation}
    Therefore, from our assumption on $g_{(2)}$ we obtain $\overline R_{ij} - \hat R_{ij} = \hat P_{ij}$ and so its $\hat g$-trace gives us
    \[
    \hat g^{ij}\overline R_{ij} - \hat R = \frac{1}{2(n-1)}\hat R \iff \overline R - \overline R_{00} = \frac{2n-1}{2(n-1)}\hat R.
    \]
    The result now follows from Lemma \ref{SC-BoundaryGeometry}.
\end{proof}

\begin{proposition}\label{HMRImplyWPE}
    Assume $n+1>3$. Let $(M^{n+1},g_+)$ be an AH manifold, and let $\overline g = r^2g_+$ be a compactification of it by a special defining function $r$. If $\Ric_{g_+}+ng_+\geq 0$ in $\Sigma\times [0,\epsilon_o)_r$ and $R_{g_+}+n(n+1)=o(r^2)$, then $g_{(1)} = 0$ and $g_{(2)} = -\hat P$, that is, $(M^{n+1},g_+)$ is WPE.
\end{proposition}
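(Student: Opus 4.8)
The plan is to run the converse direction of the previous proposition, extracting the two conditions $g_{(1)}=0$ and $g_{(2)}=-\hat P$ one at a time from the hypotheses. The scalar curvature hypothesis $R_{g_+}+n(n+1)=o(r^2)$ is the workhorse, while $\Ric_{g_+}+ng_+\geq 0$ supplies the extra algebraic leverage (forced vanishing of various boundary quantities) that turns the scalar condition into the full tensorial WPE condition. I would structure the argument so that $g_{(1)}=0$ comes essentially for free, and then invest the main effort in promoting this to the second-order statement $g_{(2)}=-\hat P$.

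First I would establish $g_{(1)}=0$, i.e.\ that $\Sigma$ is totally geodesic ($\hat L=0$). By Proposition \ref{ExpansionMetricOrder3}, $g_{(1)}=-2\hat L$, so this is equivalent to $\hat L=0$. The expansion \eqref{ScalarCurvature-rsquared} shows that the leading coefficient of $(R_{g_+}+n(n+1))/r^2$ is $[-2n\hat H]\cdot r^{-1}$, and the hypothesis $R_{g_+}+n(n+1)=o(r^2)$ forces this $r^{-1}$ term to vanish, giving $\hat H=0$: the boundary is minimal. To upgrade minimality to total geodesy I would invoke the Ricci lower bound. From the normal-component expansion \eqref{RicciCurvatureExpansion-3}, the coefficient of $r^{-1}$ in $(\Ric_{g_+}+ng_+)_{rr}$ is $-\hat H$, which already vanishes; the order-zero term is $-|\hat L|^2$. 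Since $\Ric_{g_+}+ng_+\geq 0$ throughout the collar, evaluating the $(rr)$-component and letting $r\to 0$ forces $-|\hat L|^2\geq 0$, hence $\hat L=0$. This is exactly the observation recorded in the discussion following \eqref{MeanCurvature-2ndDerivative}. Thus $g_{(1)}=0$.

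Next I would derive $g_{(2)}=-\hat P$. With $\hat L=0$, Proposition \ref{ExpansionMetricOrder3} gives $(g_{(2)})_{ij}=-\overline R_{0i0j}$, so it suffices to prove $\overline R_{0i0j}=\hat P_{ij}$ at the boundary, where $\hat P$ is the Schouten tensor of $\hat g$ from \eqref{SchoutenTensor}. Here I would again use the full Ricci lower bound, but now on the \emph{tangential} components. In \eqref{RicciCurvatureExpansion4}, with $\hat L=0$ (so $\mathring{\hat L}=0$ and $\hat H=0$) the $r^{-1}$ term already vanishes, and the order-zero term of $(\Ric_{g_+}+ng_+)_{ij}$ reduces to $-(n-1)\overline W_{0i0j}+\bigl(\hat R-\tfrac{n-1}{n}\overline R\bigr)\overline g_{ij}$. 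The plan is: (i) use the scalar hypothesis, via the vanishing of the order-zero term in \eqref{ScalarCurvature-rsquared} together with total geodesy, to obtain the scalar relation $(n-1)\overline R|_\Sigma=n\hat R$; and (ii) feed this back to simplify the Ricci expansion. With total geodesy, the Gauss-equation identity \eqref{RicciDifference} gives $\overline R_{0i0j}=\overline R_{ij}-\hat R_{ij}$, which is the key bridge converting the bulk curvature $\overline R_{0i0j}$ into boundary Ricci data. Combining this with the decomposition $\overline R_{ij}-(n-1)\overline R_{0i0j}=-(n-1)\overline W_{0i0j}+\tfrac{\overline R}{n}\overline g_{ij}-\overline R_{00}\overline g_{ij}$ from the proof of Proposition \ref{RicciCurvatureExpansion1}, and using Lemma \ref{SC-BoundaryGeometry} (with $\hat L=0$, $\hat H=0$) to write $\overline R_{00}=\tfrac12(\overline R|_\Sigma-\hat R)$, I would solve algebraically for $\overline R_{0i0j}$ in terms of $\hat R_{ij}$ and $\hat R$ and check it equals $\hat P_{ij}$.

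The main obstacle I anticipate is that the scalar-curvature hypothesis alone only controls the \emph{trace} of the second-order data, not the full tensor $g_{(2)}$; a priori it pins down $(n-1)\overline R=n\hat R$ but says nothing about the traceless part of $\overline R_{0i0j}$, equivalently about $\overline W_{0i0j}$. The resolution must come from the Ricci lower bound acting \emph{tensorially}: since $\Ric_{g_+}+ng_+\geq 0$ is a nonnegativity of the whole symmetric tensor near the boundary and the $r^{-1}$ term vanishes once $\hat L=0$, the order-zero tangential term $-(n-1)\overline W_{0i0j}+(\hat R-\tfrac{n-1}{n}\overline R)\overline g_{ij}$ must itself be $\geq 0$ as a bilinear form in the limit $r\to 0$. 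After imposing the scalar relation $(n-1)\overline R=n\hat R$, the pure-trace coefficient collapses and one is left with $-(n-1)\overline W_{0i0j}\geq 0$; but $\overline W_{0i0j}$ is trace-free in $i,j$ (contracting the Weyl tensor), and a trace-free symmetric tensor that is sign-definite must vanish. Hence $\overline W_{0i0j}=0$ on $\Sigma$, which is precisely the missing ingredient that forces the traceless part of $g_{(2)}$ into agreement with $-\hat P$. Verifying rigorously that the nonnegativity passes to the boundary limit as a bilinear-form inequality (rather than merely coefficient-wise), and that the trace-free-plus-definite argument closes the gap, is the delicate point I would treat most carefully.
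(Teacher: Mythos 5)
Your proposal is correct and follows essentially the same route as the paper's proof: boundary minimality from the $r^{-1}$ coefficient of the scalar expansion, total geodesy from the $(rr)$-component of the Ricci lower bound, the scalar relation $(n-1)\overline R|_\Sigma=n\hat R$, and then the decisive step in which the tensorial Ricci lower bound combined with the scalar relation forces $-(n-1)\overline W_{0i0j}\ge 0$, which vanishes because a symmetric trace-free tensor with a sign must be zero. The only cosmetic difference is the final bookkeeping: the paper identifies $\overline R_{0i0j}=\hat P_{ij}$ via the Fialkow--Gauss equation (\ref{Fialkow--GaussEq2}) relating $\overline P_{ij}$ to $\hat P_{ij}$, whereas you use the traced Gauss identity (\ref{RicciDifference}) together with the Schouten decomposition and $\overline P_{00}=0$ --- algebraically equivalent paths to the same conclusion.
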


\begin{proof}
From the discussion succeeding Proposition \ref{ScalarCurvature-Expansion}, we have that $\hat L = 0$ and $\hat R = \frac{n-1}{n}\overline R|_{\Sigma}$. It then follows from Lemma \ref{SC-BoundaryGeometry} that 
\begin{equation}
    \overline R_{00} = \frac{1}{2}(\overline R - \hat R) = \frac{1}{2n}\overline R
\end{equation}
at boundary points, thus 
\begin{equation}
    \overline P_{00} = \frac{1}{n-1}\overline R_{00} - \frac{\overline R}{2n(n-1)} = 0.
\end{equation}
Recall from Proposition \ref{ExpansionMetricOrder3} that
$(g_{(2)})_{ij}=-\overline{R}_{0i0j}$, where we have used that the boundary is totally geodesic. Decomposing the Riemann curvature tensor and using $\overline P_{00}=0$, we get
\begin{equation}\label{HMRImplyWPE-2}
\overline{R}_{0i0j}=\overline{W}_{0i0j}+\overline{P}_{ij}.
\end{equation}
To relate $\overline P_{ij}$ and $\hat P_{ij}$ we use the Fialkow--Gauss equation (\ref{Fialkow--GaussEq2}), where our submanifold is $(\Sigma,\hat g)$ and $(\overline{M}^{n+1},\overline g)$ is our bulk manifold. Since $\Sigma$ is a totally geodesic hypersurface, we get
\begin{equation}\label{HMRImplyWPE-3}
\overline P_{ij} = \hat P_{ij} + \frac{1}{n-2}\overline W_{0i0j},
\end{equation}
and so
\begin{equation}\label{HMRImplyWPE-1}
    \overline R_{0i0j} = \frac{n-1}{n-2}\overline W_{0i0j} + \hat P_{ij}.
\end{equation}
The proof would be completed if we can show that $\overline{W}_{0i0j}$ vanishes. To this end, notice that 
\begin{equation}
\overline{R}_{ij}=(n-1)\overline{P}_{ij}+\frac{\overline{R}}{2n}\overline{g}_{ij} = (n-1)\hat P_{ij} +\frac{n-1}{n-2}\overline{W}_{0i0j} + \frac{\overline R}{2n}\hat g_{ij}
\end{equation}
Since 
\[
\overline{R}_{ij} - (n-1)\overline{R}_{0i0j} - \overline{R}_{00}\hat{g}_{ij} = -\frac{n(n-1)}{n-2}\overline{W}_{0i0j}
\]
and has a sign thanks to Proposition \ref{RicciCurvatureExpansion1} and our Ricci curvature assumption, we deduce that $\overline{W}_{0i0j}$ has a constant sign as well. However, since \(\overline{W}_{0i0j}\) is symmetric and trace-free, it cannot have eigenvalues all non-negative or all non-positive unless they are all zero. Therefore, we conclude that \(\overline{W}_{0i0j} \equiv 0\), as desired.
\end{proof}
\begin{proposition}\label{3.4'}
    
    If, in \cref{HMRImplyWPE} $\Sigma$ is umbilic, then the condition $Ric_{g_+}+ng_+\geq 0$ may be replaced with $Ric_{g_+}+ng_+\leq 0$ and the conclusion still holds.
\end{proposition}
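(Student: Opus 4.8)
The plan is to retrace the proof of Proposition~\ref{HMRImplyWPE} and isolate precisely where the one-sided bound $\Ric_{g_+}+ng_+\ge 0$ is invoked, then verify that each such use either survives under the reversed inequality or is supplied instead by the umbilicity hypothesis. Inspection shows the sign enters in exactly two places: once through the normal component (\ref{RicciCurvatureExpansion-3}) to force $\Sigma$ to be totally geodesic, and once through the tangential zeroth-order term to annihilate $\overline W_{0i0j}$.

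First I would extract, from the scalar-curvature hypothesis $R_{g_+}+n(n+1)=o(r^2)$ alone, the two facts recorded after Proposition~\ref{ScalarCurvature-Expansion}: that $\hat H=0$ and that $-(n-1)\overline R|_\Sigma+n\hat R-n|\hat L|^2=0$. The vanishing of $\hat H$ requires no Ricci sign condition. Now the umbilicity assumption gives $\hat L_{ij}=\frac{\hat H}{n}\hat g_{ij}=0$, so $\Sigma$ is totally geodesic. This recovers the lone conclusion that the original argument extracted from $\Ric_{g_+}+ng_+\ge 0$ via (\ref{RicciCurvatureExpansion-3}), thereby bypassing the single place where the sign $\ge 0$ was essential: under $\le 0$, the normal component yields only the vacuous $-|\hat L|^2\le 0$.

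With $\Sigma$ totally geodesic, the remainder of Proposition~\ref{HMRImplyWPE} runs verbatim. One obtains $\hat R=\frac{n-1}{n}\overline R|_\Sigma$, then $\overline R_{00}=\frac{1}{2n}\overline R$ and $\overline P_{00}=0$ from Lemma~\ref{SC-BoundaryGeometry}, the decomposition (\ref{HMRImplyWPE-2}), the totally-geodesic Fialkow--Gauss relation (\ref{HMRImplyWPE-3}), and finally the identity $\overline R_{ij}-(n-1)\overline R_{0i0j}-\overline R_{00}\hat g_{ij}=-\frac{n(n-1)}{n-2}\overline W_{0i0j}$. The tangential sign argument is then run with the opposite inequality: since $\Ric_{g_+}+ng_+\le 0$, its zeroth-order tangential term $-\frac{n(n-1)}{n-2}\overline W_{0i0j}$ is $\le 0$ as a bilinear form on $T\Sigma$, so $\overline W_{0i0j}\ge 0$; being symmetric and trace-free, it must vanish. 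Hence $(g_{(2)})_{ij}=-\overline R_{0i0j}=-\hat P_{ij}$ while $g_{(1)}=-2\hat L=0$, establishing WPE.

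The main obstacle is conceptual rather than computational: one must recognize that the original proof uses the Ricci bound in two genuinely different modes---once as a one-sided estimate lost under reversal, and once merely as definiteness insensitive to the sign---and that umbilicity compensates exactly for the first use while leaving the second intact. A minor check to carry out is that, with $\hat L=0$, the $r^{-1}$ coefficient of the tangential expansion in Proposition~\ref{RicciCurvatureExpansion1} indeed vanishes, so that the zeroth-order term is genuinely the leading one and inherits the sign of $\Ric_{g_+}+ng_+$ in the limit $r\to 0^+$.
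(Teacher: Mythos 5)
Your proposal is correct and is essentially the paper's own proof: the paper simply asserts that ``the same argument as used in the proof of Proposition~\ref{HMRImplyWPE} applies,'' and your write-up supplies exactly the verification behind that assertion — umbilicity together with $\hat H=0$ replaces the one use of the sign $\ge 0$ (forcing $\hat L=0$ via (\ref{RicciCurvatureExpansion-3})), while the Weyl-vanishing step only needs $\Ric_{g_+}+ng_+$ to have a constant sign, since a symmetric trace-free tensor of one sign must vanish.
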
\begin{proof}
The same argument as used in the proof of \cref{HMRImplyWPE} applies.
\end{proof}
The proof of Theorem \ref{WPE+RiciffScDecayCond} now follows from Proposition \ref{WPEImply} and Proposition \ref{HMRImplyWPE}. 

\begin{proof}[Proof of Theorem \ref{WPE-Characterization}, part $(1)$]
    Recall from Proposition \ref{ExpansionMetricOrder3} that $g_{(1)} = -2\hat L$. This means that $g_{(1)}= 0$ if and only if $\Sigma$ is totally geodesic. From Proposition \ref{RicciCurvatureExpansion1}, $(\Ric_{g_+}+ng_+)_{rr} = O(1)$ if and only if $\hat H = 0$, while $(\Ric_{g_+}+ng_+)_{ij} = O(1)$ if and only if \[-(n-1)\mathring{\hat L}_{ij} = \frac{2n-1}{n}\hat H \hat g_{ij}.\] Therefore, $g_{(1)} = 0$ if and only if $\Ric_{g_+}+ng_+ = O(1)$, as desired. In order to show the last equivalence, notice from Proposition \ref{ScalarCurvature-Expansion} that $R_{g_+}+n(n+1) = o(r)$ if and only if $\hat H = 0$, and so it follows that $\Sigma$ is totally geodesic (i.e. $g_{(1)} = 0$) if and only if $R_{g_+}+n(n+1) = o(r)$ and $\Sigma$ is umbilic. This finishes the proof. 
\end{proof}

\begin{proof}[Proof of Theorem \ref{WPE-Characterization}, part $(2)$]
    If $g_+$ is WPE, then $\Sigma$ is totally geodesic and, by Proposition \ref{WPEImply}, $R_{g_+}+n(n+1) = o(r^2)$. Therefore, $\hat R = \frac{n-1}{n}\overline R$ at boundary points as pointed out in the discussion succeeding Proposition \ref{ScalarCurvature-Expansion}. It now follows from Proposition \ref{RicciCurvatureExpansion1} that $(\Ric_{g_+}+ng_+)_{rr} = O(r)$ and 
    \[
    \begin{split}
    (\Ric_{g_+}+ng_+)_{ij} &= -(n-1)\overline W_{0i0j} + O(r).
    \end{split}
    \]
    We will be done with the first implication if we can show that $\overline W_{0i0j} = 0$. Indeed, $\overline W_{0i0j} = \overline R_{0i0j} - \overline P_{ij}$, where we have used that $\overline P_{00} = 0$ (see beginning of the proof of Proposition \ref{HMRImplyWPE}). However, Proposition \ref{ExpansionMetricOrder3} and the WPE condition gives $\hat P_{ij} = \overline R_{0i0j}$. The Fialkow--Gauss (\ref{Fialkow--GaussEq2}) then gives $\overline W_{0i0j} = \hat P_{ij} - \overline P_{ij} = -\frac{1}{n-2}\overline W_{0i0j}$, concluding the argument.
    
    Let us now assume that $\Ric_{g_+}+ng_+ = O(r)$. Then it follows from Proposition \ref{RicciCurvatureExpansion1} and the expansion of $(\Ric_{g_+}+ng_+)_{rr}$ that $\Sigma$ is totally geodesic, therefore umbilic. If we look at the tangential components $(\Ric_{g_+}+ng_+)_{ij}$, then it is $O(r)$ if and only if
    \[
    -(n-1)\overline{W}_{0i0j} + \left(\hat R - \frac{n-1}{n}\overline{R}\right)\overline g_{ij} = 0
    \]
    on the boundary. Tracing with $\hat g$ gives us $n\hat R - (n-1)\overline R = 0$ on $\Sigma$, and so $\overline{W}_{0i0j}$ vanishes as well on $\Sigma$. The condition on the scalar curvature now follows from the discussion succeeding Proposition \ref{ScalarCurvature-Expansion}. 
    
    Finally, if $R_{g_+}+n(n+1)=o(r^2)$, $\Sigma$ is umbilic and $\overline{W}_{0i0j}$ vanishes on $\Sigma$, then $\hat H=0$ and $\Sigma$ is in fact totally geodesic, so $g_{(1)} =0$. Now, from the discussion succeeding Proposition \ref{ScalarCurvature-Expansion}, we have $-(n-1)\overline R +n\hat R =0$ on $\Sigma$, which gives $\overline{P}_{00}=0$ on $\Sigma$ (see beginning of the proof for Proposition \ref{HMRImplyWPE}). By the decomposition formula for the Riemann tensor, we then deduce $\overline{R}_{0i0j} =\overline{P}_{ij}$,
    where we have used that $\overline{W}_{0i0j}$ vanishes on $\Sigma$. Since $g_{(2)} = -\overline{R}_{0i0j}$ by Proposition \ref{ExpansionMetricOrder3}, it remains to show that $\overline{P}_{ij} = \hat P_{ij}$ on $\Sigma$. However, this follows from the Fialkow--Gauss equation (\ref{Fialkow--GaussEq2}). This finishes the proof. 
\end{proof}

\section{Cheeger Constant on Asymptotically CMC Submanifolds}\label{CheegerConstantOnSubmanifolds}
We start with the proof of Theorem \ref{CHCONSTANT-UpperBound}.
\begin{proof}[Proof of Theorem \ref{CHCONSTANT-UpperBound}]
    First, from the authors work in \cite{pérezayala-tyrrell} (see Corollary 1.9) we know that
    \[
    \lambda_{1,p}(Y^{k+1})\le \left(\frac{k}{p}\right)^p\left(1-\frac{C^2}{(k+1)^2}\right)^{\frac{p}{2}}.
    \]
    On the other hand, it follows from the work of Takeuchi in \cite{Takeuchi} that 
    \[
    \lambda_{1,p}(Y^{k+1})\ge \left(\frac{\Ch(Y^{k+1})}{p}\right)^p.
    \]
    The result now follows.
\end{proof}

We now turn into the proof of Theorem \ref{LowerBoundCheegerConstant-Submanifold}. The argument is inspired by the methods used in \cite{HijaziOussama2020TCCo}. Recall that a Lee-eigenfunction on an AH manifold $(M^{n+1},g_+)$ is a smooth, positive solution $u$ of 
\begin{equation}
\begin{cases}
    \Delta_{g_+}u = (n+1)u\\ u - r^{-1} = O(1)
\end{cases}
\end{equation}
which also satisfies the gradient estimate $|\nabla_{g_+}u|^2_{g_+} \le u^2$. It is known that if the AH manifold is WPE, satisfies $\Ric_{g_+}+ng_+\ge 0$, and its conformal infinity has non-negative Yamabe invariant, then a Lee-eigenfunction exists; see Remark 1.3 in \cite{GuillarmouColin2010SCoP}.

\begin{proof}[Proof of Theorem \ref{LowerBoundCheegerConstant-Submanifold}]
    Let $u$ be a Lee-eigenfunction on the bulk manifold $(M^{n+1},g_+)$ and recall that $b(u) = \nabla^2_{g_+}u - ug_+$. Set $\hat u = u|_Y$, $\hat f = \ln \hat u$ and $T(u) = \Tr_{g_+}((\nabla^2_{g_+}u)|_{(TY^2)^\perp})$. A quick calculation shows that 
    \[
    T(u) = \Tr_{g_+}b(u)|_{(TY^2)^\perp} + \Tr_{g_+}(\hat ug_+)|_{(TY^2)^\perp} = \Tr_{g_+}b(u)|_{(TY^2)^\perp}+(n-k)\hat u.
    \]
    Therefore, we have
    \[
    \begin{split}
    \Delta_{h_+}\hat f & = \hat u^{-1}\Delta_{h_+}\hat u - \hat u^{-2}|\nabla_{h_+}\hat u|^2 \\&=\hat u^{-1}\left\{(\Delta_{g_+}u)|_Y + H^Y(u) - T(u)\right\} - \hat u^{-2}|\nabla_{h_+}\hat u|^2 \\&= \hat u^{-1}\left\{(n+1)\hat u + H^Y(u)-\Tr_{g_+}b(u)|_{(TY^2)^\perp} - (n-k)\hat u\right\} - \hat u^{-2}|\nabla_{h_+}\hat u|^2 \\ &= (k+1) + \hat u^{-1}H^Y(u) - \hat u^{-1}\Tr_{g_+}b(u)|_{(TY^2)^\perp} - \hat u^{-2}|\nabla_{h_+}\hat u|^2.
    \end{split}
    \]
    Using 
    \begin{equation}\label{GradientEstimate}
    |\nabla_{h_+}\hat f|^2 = \hat u^{-2}|\nabla_{h_+}\hat u|^2\le \hat u^{-2}|\nabla_{g_+}u|^2\le 1,
    \end{equation}
    we then deduce
    \[
        \Delta_{h_+}\hat f \ge k - \beta^Y(u)+ \hat u^{-1}H^Y(u).
    \]
    To estimate the last term, we use Cauchy-Schwarz inequality and our assumption on the mean curvature to get
    \[
    |\hat u^{-1} H^Y(u)| \le \alpha,
    \]
    thus
    \begin{equation}\label{Estimate-TestFunction}
        \Delta_{h_+}\hat f \ge k-\beta^Y(u)-\alpha.
    \end{equation}

    We are in position to estimate the Cheeger constant of $Y^{k+1}$. Let $\Omega\subseteq Y$ be a bounded domain with smooth boundary $\partial \Omega$, and denote by $N_\Omega$ its inward unit normal. Integrating by parts gives 
    \[
    \int_{\Omega} \Delta_{h_+}\hat f\; dv_{h_+} = -\int_{\partial \Omega}\langle \nabla_{h_+}\hat f, N_\Omega\rangle\; d\sigma_{h_+} \le A(\partial \Omega),
    \]
    where we have used the gradient estimate (\ref{GradientEstimate}).
    On the other hand, integrating (\ref{Estimate-TestFunction}) yields
    \[
    \int_{\Omega} \Delta_{h_+}\hat f \;dv_{h_+} \ge (k-\beta^Y(u) - \alpha)V(\Omega).
    \]
    This finishes the proof.
\end{proof}

\section{Examples: Proof of Theorem \ref{EX-PosNegYamabe} and Theorem \ref{WPE-Submanifolds}}\label{Examples}

\subsection{Non-Einstein AH Manifolds with $\Ch = n$ and $\mathcal{Y}>0$.}

In this section, we provide a class of submanifolds of hyperbolic space which are asymptotically minimal. In fact, these will be a class that comes from rotation hypersurfaces which are minimal and are known in literature as catenoids. For the purpose of this section, we change from the Poincar\'e ball model to the hyperboloid model as it is our intention to stay closer to Do Carmo--Dajczer's presentation in \cite{do2012rotation}. To this end, consider $\mathbb{R}^{n+3}$ equipped with the Lorentzian metric 
\[
g_{-1} = -dx_1^2+\sum_{i=2}^{n+3}dx_i^2.
\]
Then the subset $\{x\in\mathbb{R}^{n+3}: g_{-1}(x,x) = -1,x_1>0\}$, equipped with the induced metric $g_H$ from $(\mathbb{R}^{n+3},g_{-1})$, is $\mathbb{H}^{n+2}(-1)$.

We proceed with a brief description of Do Carmo--Djaczer's construction. Let $P^2$ denote a two dimensional subspace in $\mathbb{R}^{n+3}$, and denote by $G$ the subgroup of orthogonal transformations with positive determinant of $(\mathbb{R}^{n+3},g_{-1})$ which leave $P^2$ pointwise fixed. We now pick a three dimensional subspace $P_3$ of $\mathbb{R}^{n+3}$ containing $P^2$ and with $P^3\cap \mathbb{H}^{n+2}(-1)\not= \emptyset$, and we also select a smooth curve $\gamma$ in $P^3\cap \mathbb{H}^{n+2}(-1)$ which does not intersect $P^2$. The orbit of $\gamma$ under $G$ is called a rotation hypersurface and $\gamma$ is known as the generating curve. Let us denote this rotation hypersurface by $\C^{n+1}$. If 
the initial two dimensional subspace $P^2$ is such that $g_{-1}|_{P^2}$ is a Lorentzian metric, then $\C^{n+1}$ is called a spherical rotation hypersurface; if $g_{-1}|_{P^2}$ is a Riemannian metric, then $\C^{n+1}$ is called a parabolic rotation hypersurface; and if $g_{-1}|_{P^2}$ is a degenerate quadratic form, then $\C^{n+1}$ is called a hyperbolic rotation hypersurface.

In \cite{do2012rotation}, it is shown that a parametrization of these rotation hypersurfaces exists which depends only on a single function. It is shown that minimality boils down to solving a nonlinear ODE. Indeed, if $s$ denotes the arc length of the generating curve $\gamma$ and if we assume, up to a rotation and a choice of orthonormal basis, that $\gamma(s) = (x_1(s),0,\cdots,0,x_{n+2}(s),x_{n+3}(s))$, then the rotation hypersurface $\C^{n+1}$ is minimal if and only if 
\begin{equation}\label{MinimalODE}
x_1x_1'' + n(x_1')^2 - (n+1)x_1^2 - \delta n=0;
\end{equation}
see equation $(3.13)$ in \cite{do2012rotation}\footnote{The discrepancy comes from the fact that our hypersurfaces are $(n+1)$ dimensional instead of $n$ dimensional.}. Here, $\delta = 1$ if $\C^{n+1}$ is spherical, $\delta = 0$ if $\C^{n+1}$ is parabolic, and $\delta = -1$ if $\C^{n+1}$ is hyperbolic. In any such case, a minimal rotation hypersurface  $\C^{n+1}$ is simply called a catenoid. Spherical, parabolic and hyperbolic catenoids (solutions to (\ref{MinimalODE})) have been proven to exist \cites{do2012rotation, wang2018simons}, and in many cases they have been shown to be embedded; see Theorem 4.(ii), Theorem 9., part 3, and Theorem 10, part 2, in the work of Palmas  \cite{PalmasOscar1999Crhw} for a full description and classification in different cases.

We focus on the spherical catenoids as these are the most well known among the three different types, although most of what we will discuss applies to all of them. Let $(\C^{n+1}, h$) be a spherical catenoid in $\mathbb{H}^{n+2}(-1)$, where $h$ denotes the induced metric. We summarize the basic properties each of these satisfy:
\begin{enumerate}
    \item Because they solve equation (\ref{MinimalODE}) they are minimal, and because the induced metric from $\overline{g}_H$ extends to the boundary, they are examples of minimal conformally compact submanifolds of $\mathbb{H}^{n+2}(-1)$. Their boundary has two connected components, which, up to an isometry, is just the union of two round $n$-dimensional spheres. In particular, their conformal infinity is of positive Yamabe type. Moreover, these submanifolds are asymptotically hyperbolic themselves as a consequence of Proposition 1.8 in \cite{pérezayala-tyrrell}. 
    \item Spherical catenoids are not totally geodesic. This can be seen using the formulas for the principal curvatures given in Proposition 3.2 in \cite{do2012rotation}. The following two formulas follow from Gauss equation; see Appendix \ref{SubmanifoldGeometry} for a proof:
    \begin{equation}\label{Ricci-MinimalSubmanifold}
    \Ric^\C = -nh - B^2,
    \end{equation}
    and
    \begin{equation}\label{Scalar-MinimalSubmanifold}
    R^\C = -n(n+1) - |B|^2_{g_H}.
    \end{equation}
    Here $B$ denotes the second fundamental form of $\C^{n+1}$, and $B^2$ is a positive definite symmetric two tensor whose components are $(B^2)_{\alpha\gamma} = g_H^{\beta\delta}B_{\alpha\delta}B_{\beta\gamma}$. Therefore, spherical catenoids are not Einstein as they are not totally geodesic.
    
    \item Thanks to Theorem \ref{CheegerConstant-SubHyperbolicSpaceI}, we have that $\Ch(\C^{n+1},h) = n$.
\end{enumerate}

The previous discussion proves Theorem \ref{EX-PosNegYamabe}, part (1).

\subsection{Non-Einstein AH Manifolds with $\Ch = n$ and $\mathcal{Y}<0$.}

The key ingredient in the construction of these examples is the existence of smooth solutions to the so called Asymptotic Plateau Problem in hyperbolic space; see \cite{CoskunuzerBaris2009APP} for a beautiful survey on the topic. Given any $k$-dimensional submanifold $\Gamma^k$ in $(\mathbb{S}^{n+1},g_{round})$, the asymptotic Plateau problem asks for the existence of a minimal submanifold $Y^{k+1}$ of $(\mathbb{H}^{n+2}(-1),g_H)$ satisfying $\partial Y = \Gamma$. Any such minimal submanifold $Y^{k+1}\subset \mathbb{H}^{n+2}(-1)$ arising as a solution to the asymptotic Plateau problem would be an example of a conformally compact asymptotically minimal submanifold, if it is regular enough, and so our results would apply. Furthermore, they would be AH spaces themselves thanks to Proposition 1.8 in \cite{pérezayala-tyrrell}. 

The following result follows from an existence theorem due to Anderson in \cite{AndersonMichaelT.1982Cmvi}; see also Theorem 3.1 together with Remark 3.2 in \cite{CoskunuzerBaris2009APP} and Lemma 2.5 in \cite{CoskunuzerBaris2005OtNo}:
\begin{theorem}[Anderson, Coskunuzer]\label{Anderson-Existence}
    Assume $2\le n\le 4$ and let $\Gamma^{n}$ be an embedded closed hypersurface of $(\mathbb{S}^{n+1},g_{round})$. Then there exists a smooth, complete, embedded and minimal hypersurface $Y^{n+1}$ inside $\mathbb{H}^{n+2}(-1)$ that is asymptotically bounded by $\Gamma^n$. 
\end{theorem}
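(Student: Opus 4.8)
The plan is to establish this via the geometric measure theory of area-minimizing integral currents, following Anderson, and then to upgrade to embeddedness and sharp boundary regularity using Coskunuzer. I would work in the Poincar\'e ball model, so that $\mathbb{H}^{n+2}(-1)$ is the open ball $\mathbb{B}^{n+2}$ with its hyperbolic metric and $\mathbb{S}^{n+1}$ is its sphere at infinity. Since $\Gamma^n$ is an embedded closed hypersurface of $\mathbb{S}^{n+1}$, Jordan--Brouwer separation shows it bounds a region of the sphere; hence $\Gamma$ is null-homologous and can be realized as $\partial T$ for integral currents $T$ in the closed ball, which sets up a minimization problem.

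First I would confront the central analytic difficulty: every hypersurface asymptotic to $\Gamma$ has infinite hyperbolic area, so one cannot directly minimize the area functional. Following Anderson, I would instead produce a \emph{locally} area-minimizing locally integral current $T$ in $\mathbb{H}^{n+2}(-1)$ with prescribed asymptotic boundary $\Gamma$. Concretely, exhaust $\mathbb{H}^{n+2}(-1)$ by geodesic balls $B_\rho$, solve the compact Plateau problem for boundary data approximating $\Gamma$ inside each $B_\rho$ to obtain area-minimizers $T_\rho$, and extract a subsequential limit $T$ using the monotonicity formula, which furnishes uniform local mass bounds and hence compactness on compact subsets of the interior. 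The convex-hull property of minimal surfaces, proved by a maximum-principle/barrier argument against the totally geodesic hypersurfaces and horospheres of $\mathbb{H}^{n+2}(-1)$, controls the behavior of the $T_\rho$ near $\mathbb{S}^{n+1}$ and guarantees that no mass escapes and that $T$ is genuinely asymptotic to $\Gamma$ rather than acquiring spurious boundary.

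Next I would invoke interior regularity. Since $T$ is area-minimizing of codimension one in the $(n+2)$-dimensional ambient space, Federer's dimension-reduction argument bounds the interior singular set of $Y := \operatorname{spt} T$ by dimension $(n+1)-7 = n-6$. Under the hypothesis $n \le 4$ this is negative, so the singular set is empty and $Y$ is a smooth, embedded, complete minimal hypersurface in the interior. Finally, I would establish boundary regularity and the precise asymptotics: using the boundary-regularity theory for minimal hypersurfaces asymptotic to a smooth $\Gamma \subset \mathbb{S}^{n+1}$ (Hardt--Lin, Lin, Tonegawa), $Y$ extends with the required smoothness up to $\mathbb{S}^{n+1}$, meets it orthogonally along $\Gamma$, and is therefore conformally compact and asymptotically bounded by $\Gamma$; embeddedness up to the boundary together with the precise admissible range $2 \le n \le 4$ are exactly the content of Coskunuzer's results cited.

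The step I expect to be the main obstacle is the second one: converting the failure of the global area functional, due to infinite hyperbolic volume, into a well-posed minimization. The delicate points are obtaining \emph{uniform} local mass bounds near the sphere at infinity and proving that the limit current attains the correct asymptotic boundary $\Gamma$ with no loss of mass. This is precisely where the hyperbolic geometry, through monotonicity, convex-hull barriers, and the conformal compactification, must be used carefully, and it is the heart of Anderson's original argument.
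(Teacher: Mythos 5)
This statement is quoted in the paper as an external result: the paper gives no proof of it at all, but simply cites Anderson's existence theorem together with Theorem 3.1, Remark 3.2 of Coskunuzer's survey and Lemma 2.5 of Coskunuzer's paper on the number of solutions to the asymptotic Plateau problem. Your proposal is a faithful and essentially correct reconstruction of the argument contained in those references: Anderson's scheme of solving compact Plateau problems on an exhaustion by geodesic balls, passing to a limit via monotonicity and local mass bounds, using the convex hull property to pin down the asymptotic boundary, then Federer dimension reduction for interior regularity and Hardt--Lin/Lin/Tonegawa for regularity at infinity. So there is nothing to compare against inside the paper itself; your outline matches the cited literature's route.

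One small remark: your dimension-reduction step only uses $n-6<0$, which holds for all $n\le 5$, so your argument does not by itself explain why the statement is restricted to $2\le n\le 4$. That restriction is inherited from the precise form of the results the paper cites (in particular the boundary-regularity/embeddedness statements needed to make $Y$ a conformally compact submanifold of the regularity class $C^{m,\alpha}$, $m\ge 3$, used throughout the paper), not from the interior singular-set bound. This is not an error in your proof under the stated hypotheses, but it is worth being aware that the constant $4$ is not produced by the step where you invoke it.
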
 
\noindent We call any such $Y^{n+1}$ given by Theorem \ref{Anderson-Existence} a minimal filling for the given boundary data $\Gamma^{n}$. Examples with conformal infinity of negative Yamabe type and Cheeger constant equal to $n$ can now be constructed as follows, at least in low dimensions. Pick an embedded closed hypersurface $\Gamma^{n}$ in $(\mathbb{S}^{n+1},g_{round})$ with negative Yamabe invariant and consider its minimal filling $Y^{n+1}$ inside $\mathbb{H}^{n+2}(-1)$. These minimal fillings satisfy the following properties:

\begin{enumerate}
\item Each minimal filling $Y^{n+1}$ is an example of a conformally compact submanifold which is minimal and, therefore, asymptotically minimal and thus Theorem \ref{CheegerConstant-SubHyperbolicSpaceI} gives us that $\Ch(Y^{n+1}) = n$.

\item Since $Y^{n+1}$ is a submanifold inside $\mathbb{H}^{n+2}(-1)$ whose boundary $\Gamma^n$ is of negative Yamabe type, we conclude that $Y^{n+1}$ is not totally geodesic. In fact, the only totally geodesic submanifolds in $\mathbb{H}^{n+2}(-1)$ are lower dimensional copies of hyperbolic space, which are asymptotically bounded by round spheres (see Chapter 7 in \cite{Spivak}). Hence, minimal fillings $Y^{n+1}$ with $\partial Y^{n+1} = \Gamma^n$ being of negative Yamabe type are non-Einstein, as \ref{Ricci-MinimalSubmanifold} shows, and, more generally, minimal fillings are never Einstein except for when $\Gamma^n$ is connected and a round sphere. 
\item Moreover, thanks to Theorem \ref{WPE-Submanifolds}, if $\Gamma^n$ is not a union of round spheres (i.e. non-umbilic) in $(\mathbb{S}^{n+1},g_{round})$, then $(Y^{n+1}, h_+)$ is not WPE. 
\end{enumerate}

 Take $\mathbb{S}^3$, for instance. One may conformally map any compact, orientable, genus $\gamma$ surface from $\mathbb{R}^3$ into $\mathbb{S}^3$ by stereographic projection. These will have negative Yamabe invariant for $\gamma>1$ by the Gauss--Bonnet theorem. Better yet, it follows by a result of Lawson \cite{LawsonH.Blaine1970CMSi} that for any given genus $\gamma$, there exists at least one  embedded closed minimal surface $\Gamma^2$ in $\mathbb{S}^3$ with that given genus. Different examples of embedded minimal surfaces with negative total Gauss curvature in $\mathbb{S}^3$ are also given by Karcher--Pinkall--Sterling in \cite{KarcherH.1988Nmsi}; see \cite{BrendleSimon2013Msi:} for a nicely written survey on the topic. 

The previous discussion completes the proof of Theorem \ref{EX-PosNegYamabe}, part (2).

\subsection{WPE Asymptotically Minimal Submanifolds - Proof of Theorem \ref{WPE-Submanifolds}}
Before proceeding, we define a convenient coordinate system on our hypersurface. Let \( Y^{n+1} \) be a conformally compact hypersurface of an AH manifold \( (M^{n+2}, g_+) \). Let \( r \) be a special defining function for \( \Sigma \). As usual, we set $\bar g = r^2g_+$ and $\hat{g} = (r^2 g_+) |_{T^2\Sigma}$. Let \( x^1, \dots, x^n \) be local coordinates in an open set $U$ in \( \partial Y \), and let \( x^{n+1} \) denote a signed \( \hat{g} \)-distance to \( \partial Y \). Extend $x^1,...,x^{n}$ to a neighborhood of $U$ in $\Sigma$ using the flow of $\nabla_{\hat{g}}x^{n+1}.$ Then extend $x^1,...,x^n,x^{n+1}$ to an on open set in $M$ by the flow of $\nabla_{r^2g_+}r.$ Let \( 1 \leq a,b,c,d \leq n \), and let \( \alpha, \beta, \gamma, \delta \in \{r,1,2,...,n \} \) . Furthermore, let \( \hat{x}^{\alpha} = x^{\alpha} \big|_{Y} \) and $\hat{r}=r|_{Y}$. We refer to these as induced holographic coordinates on $Y.$ We let the index $0$ correspond to $r$ when a tensor is evaluated at $r=0.$ Let \( H^Y \) and \( \overline{H}^Y \) denote the scalar mean curvatures of \( Y \) with respect to \( g_+ \) and \( \overline{g} \), respectively. Similarly, let \( N \) and \( \overline{N} \) be unit normal vectors to \( Y \) with respect to \( g_+ \) and \( \overline{g} \), respectively, oriented in the same direction. Notice that $N$ and $\overline{N}$ are conformally related by $N=r\overline{N}.$ 

\begin{figure}[h]\label{MinHyp}
    \centering
\includegraphics[width=0.7\linewidth]{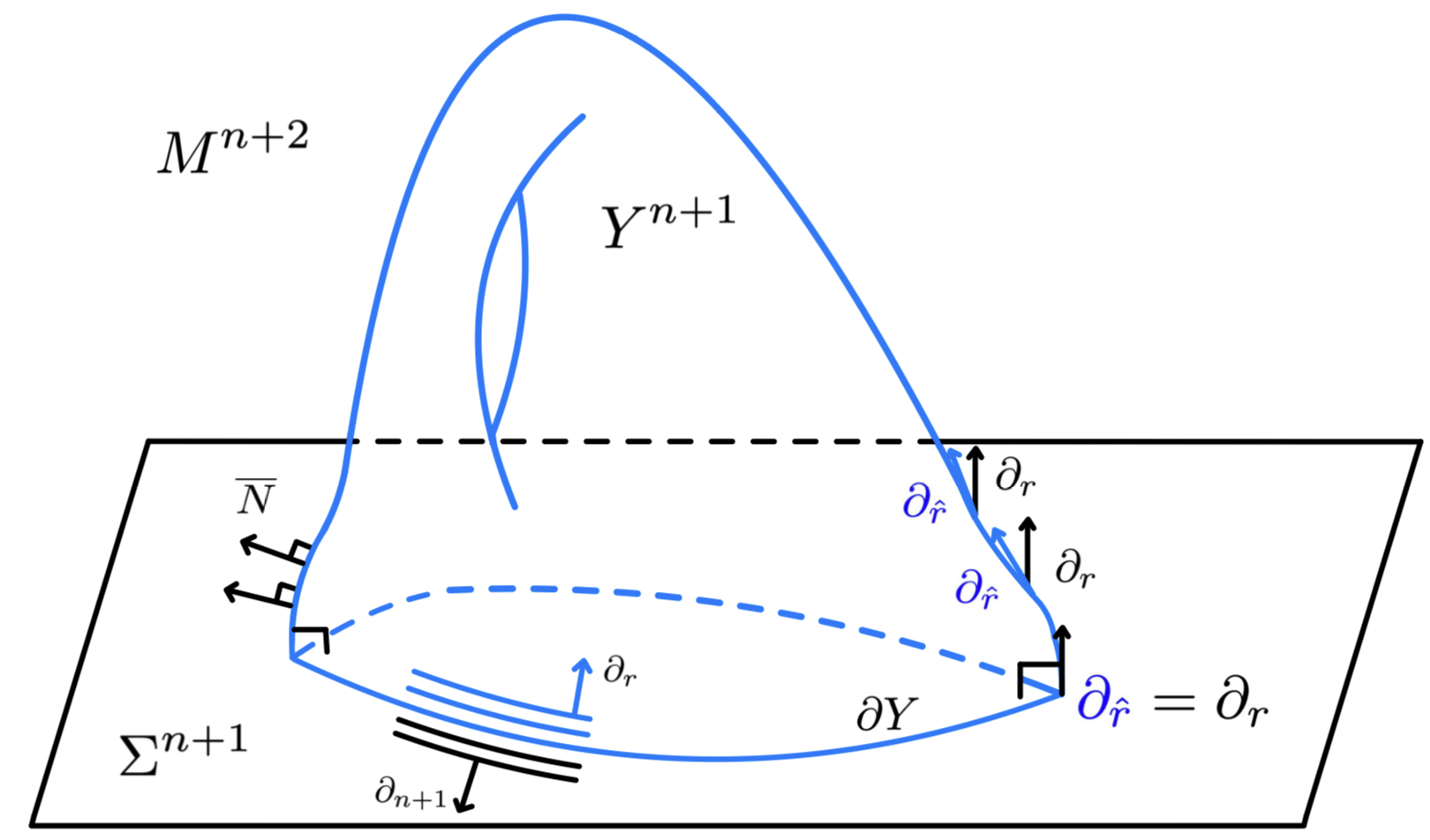}
    \caption{An asymptotically minimal hypersurface $Y^{n+1}$ sitting inside an AH manifold $M^{n+2}$.}
    \label{fig:enter-label}
\end{figure}

We use \( h_+ \) and \( \overline{h} \) to denote the induced metrics on \( Y \). Specifically, we define  
\[
h_{\alpha\beta} = g_+(\partial_{\hat{\alpha}}, \partial_{\hat{\beta}}),\quad \overline{h}_{\alpha\beta} = \overline{g}(\partial_{\hat{\alpha}}, \partial_{\hat{\beta}}),\quad B_{\alpha\beta} = B(\partial_{\hat{\alpha}}, \partial_{\hat{\beta}}) = g_+(\nabla^{g_+}_{\hat{\alpha}} \partial_{\hat{\beta}}, N),\]
and
\[
\overline{B}_{\alpha\beta} = \overline{B}(\partial_{\hat{\alpha}}, \partial_{\hat{\beta}}) = \overline{g}(\nabla^{\overline{g}}_{\hat{\alpha}} \partial_{\hat{\beta}}, \overline{N}).
\]
We will use $II$ to denote the second fundamental form of $\partial Y$ in $\Sigma.$


Let us assume now the bulk manifold $(M^{n+2},g_+)$ is PE. The conformal transformation law for the second fundamental form states
\[
B_{\alpha\beta}=\frac{\overline{B}_{\alpha\beta}}{r}+\frac{\overline{N}(r)\overline{h}_{\alpha\beta}}{r^2}.
\]
Contracting yields
\[
\overline{N}(r)=\frac{H^Y-r\overline{H}^Y}{n+1},
\]
thus it follows that $H^Y=O(r)$ if and only if $\overline{N}(r)=0$ at $r=0.$ Since $\overline{N}(r)=\overline{g}(\partial_r,\overline{N})$, we observe that $H^Y=O(r)$ is equivalent to $Y$ meeting $\Sigma$ at right angles. 

We will use the following two formulas, which we carefully derived in the Appendix \ref{SubmanifoldGeometry}:
 \begin{equation}\label{RicciSubmanifod}
    R_{\alpha\gamma}^Y+nh_{\alpha\gamma}=-B^2_{\alpha\gamma}+H^YB_{\alpha\gamma}-W^{M}_{\hat{\alpha} N\hat{\gamma} N},
    \end{equation}
and
\begin{equation}\label{SCBABY}
    R^Y+n(n+1)=(H^Y)^2-|B|^2_{h_+}.
\end{equation} Our goal is to apply Theorem \ref{WPE-Characterization} to $(Y^{n+1},h_+)$ based on the previous expressions. For instance, from the above expansions and \[ W^{M}_{\hat{\alpha} N\hat{\gamma} N} = W^M(\partial_{\hat{\alpha}}, N,\partial_{\hat{\beta}}, N) = r^2W^M(\partial_{\hat{\alpha}},\overline N, \partial_{\hat{\beta}},\overline N) = \overline W_{\hat{\alpha} \overline N\hat{\gamma} \overline N},\] we obtain that under the assumption that $H^Y=O(r)$ it necessarily follows that $\partial Y$ is totally geodesic in $(\overline{Y},\overline{h})$.


Let $X,Z$ be vectors tangent to $\partial Y$ at a point $q\in \partial Y$. Recall that since $Y$ is asymptotically minimal, it meets $\Sigma$ at right angles. Therefore, $\overline{N}$ is tangent to $\Sigma$ at points on $\partial Y$, and so at $q$ we have
  \[
  \overline{B}(X,Z)=\overline{g}(\nabla_{X}^{\overline{g}}Z,\overline{N}),\quad
   II(X,Z)=\hat{g}(\nabla_{X}^{\hat{g}}Z,\overline{N}).
  \]
Recall that $\nabla^{\hat{g}}_XZ=\mathrm{Proj}(\nabla^{\overline{g}}_XZ)$, where Proj is the projection onto $T\Sigma.$ It follows that 
\[
II(X,Z)=\overline{g}(\mathrm{Proj}(\nabla^{\overline{g}}_XZ),\overline{N})=\overline{g}(\nabla^{\overline{g}}_XZ,\overline{N})=\overline{B}(X,Z).
\]
We sum this up with a lemma:
\begin{lemma}\label{2ffsEqual}
If $Y$ is an asymptotically minimal hypersurface in a PE space\footnote{This is still true if PE is weakened to AH.}, then 
\begin{equation}
II=\overline{B}|_{T^2\partial Y}.
\end{equation}
\end{lemma}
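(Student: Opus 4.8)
The plan is to verify the identity pointwise along $\partial Y$ by evaluating both second fundamental forms on an arbitrary pair of tangent vectors and checking they return the same number. Fix $q \in \partial Y$ and let $X, Z \in T_q(\partial Y)$. The essential geometric input is that asymptotic minimality forces $Y$ to meet $\Sigma$ orthogonally: from the conformal transformation law for the second fundamental form one gets $\overline N(r) = (H^Y - r\overline H^Y)/(n+1)$, so $H^Y = O(r)$ is equivalent to $\overline N(r) = 0$ at $r=0$, and since $\overline N(r) = \overline g(\partial_r, \overline N)$ this says precisely that along $\partial Y$ the $\overline g$-unit normal $\overline N$ of $Y$ is tangent to $\Sigma$.

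Next I would observe that this single fact lets the same vector $\overline N$ serve as the normal in both second fundamental forms. Indeed $\overline N \perp T_qY$ and $T_q(\partial Y) \subset T_q Y$, so $\overline N \perp T_q(\partial Y)$; combined with $\overline N \in T_q\Sigma$ this shows $\overline N$ is a unit normal to $\partial Y$ inside $\Sigma$ (recall $\hat g$ and $\overline g$ agree on $T\Sigma$, so a $\overline g$-unit tangent vector is also $\hat g$-unit). Hence both $II(X,Z) = \hat g(\nabla^{\hat g}_X Z, \overline N)$ and $\overline B(X,Z) = \overline g(\nabla^{\overline g}_X Z, \overline N)$ are computed against the very same $\overline N$.

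The computation is then immediate from the Gauss formula for the hypersurface $\Sigma \subset (\overline M, \overline g)$: the induced connection is the tangential projection of the ambient one, $\nabla^{\hat g}_X Z = \mathrm{Proj}_{T\Sigma}(\nabla^{\overline g}_X Z)$. Because $\overline N \in T\Sigma$, pairing with $\overline N$ is insensitive to this projection, so
\[
II(X,Z) = \overline g\big(\mathrm{Proj}_{T\Sigma}(\nabla^{\overline g}_X Z), \overline N\big) = \overline g(\nabla^{\overline g}_X Z, \overline N) = \overline B(X,Z),
\]
which is the claimed equality $II = \overline B|_{T^2\partial Y}$.

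There is no serious obstacle once orthogonal meeting is in hand; the only point demanding care is the verification that $\overline N$ is genuinely tangent to $\Sigma$ along $\partial Y$ and simultaneously a unit normal for $\partial Y \subset \Sigma$, as everything else is the standard relationship between a submanifold connection and its ambient connection. I would also note that orthogonality, and hence the lemma, relies only on the asymptotically hyperbolic structure rather than on $\Ric_{g_+} = -ng_+$, which is consistent with the footnote asserting that the Poincar\'e--Einstein hypothesis can be weakened to AH.
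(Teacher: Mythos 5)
Your proposal is correct and follows essentially the same route as the paper: deduce from the conformal transformation law that $H^Y=O(r)$ forces $\overline N$ to be tangent to $\Sigma$ along $\partial Y$, then note that $\nabla^{\hat g}_X Z=\mathrm{Proj}_{T\Sigma}(\nabla^{\overline g}_X Z)$ and that pairing with $\overline N\in T\Sigma$ is unaffected by the projection. Your explicit verification that $\overline N$ is simultaneously a $\hat g$-unit normal to $\partial Y$ inside $\Sigma$ is a point the paper leaves implicit, but it is the same argument.
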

With the above lemma in mind, along with $\overline{h}^{ar}=O(r^2)$, we deduce that $\overline{H}^Y=\hat{\eta}+\overline{B}_{rr}+O(r^2),$ where $\hat{\eta}=\hat{h}^{ab}II_{ab}$ and $\hat{h}=\overline{h}|_{T^2\partial Y}.$ 

\vspace{.1in}
We are now in position to prove \cref{conf inv}: 
\begin{proof}[Proof of Proposition \ref{conf inv}]
Recall from (\ref{RicciSubmanifod}) that we have  
\[
 R_{rr}^Y+nh_{rr}=-B^2_{rr}+H^YB_{rr}-W^{M}_{\hat{r}N\hat{r}N},
 \]
where $B^2_{rr}=h^{\alpha\beta}B_{r\alpha}B_{\beta r}=r^2\overline{h}^{rr}(B_{rr})^2+r^2\overline{h}^{ab}B_{ra}B_{br}+2r^2\overline{h}^{ar}B_{ar}B_{rr}$. By \cref{RicciCurvatureExpansion1}, we know that \(R_{rr}^Y+nh_{rr}=O(r)\) since \(\partial Y\) is totally geodesic in \(\overline{Y}\). Next, since $ W^{M}_{\hat{\alpha} N\hat{\gamma} N} = \overline W^M_{\hat{\alpha} \overline N\hat{\gamma} \overline N}$ and the third conformal fundamental form  of $\Sigma$ vanishes, as we are assuming the bulk is PE, it is clear that \(W^{M}_{\hat{r}N\hat{r}N}=O(r)\). Thus it follows that  
\begin{equation}\label{conf inv2}
-B^2_{rr}+H^YB_{rr}=O(r).
\end{equation}

We proceed by noting that 
\[
B_{ar}=\frac{\overline{B}_{ar}}{r}+\frac{\overline{N}(r)\overline{h}_{ar}}{r^2},
\]
but recalling that $\overline{N}|_{r=0}=\partial_{n+1}$, we deduce 
\[
\begin{split}
\overline{B}_{a0}&=\overline{g}(\nabla^{\overline{g}}_{\hat{a}}\partial_{\hat{r}},\partial_{n+1})|_{r=0}=\overline{\Gamma}^{n+1}_{\hat{a}\hat{r}}|_{r=0}\\
&=\frac{1}{2}\{\partial_{\hat{a}}\overline{g}_{\hat{r} n+1}+\partial_{\hat{r}}\overline{g}_{\hat{a} n+1}-\partial_{n+1}\overline{g}_{\hat{a}\hat{r}}\}|_{r=0}=0.
\end{split}
\]
This allows us to write $B^2_{rr}=r^2(B_{rr})^2+O(r^2)$ and,
consequently, (\ref{conf inv2}) becomes
\begin{equation}\label{conf inv3}
-r^2(B_{rr})^2+H^YB_{rr}=O(r).
\end{equation}

Recall once again that  
\[
B_{\alpha\beta}=\frac{\overline{B}_{\alpha\beta}}{r}+\frac{\overline{N}(r)\overline{h}_{\alpha\beta}}{r^2},
\]
and  
\[
\overline{N}(r)=\frac{H^Y-r\overline{H}^Y}{n+1}.
\]
From this, it follows that  
\begin{equation}\label{dN}
\partial_r|_{r=0}(\overline{N}(r))=\frac{\partial_r|_{r=0}(H^Y)-\overline{H}^Y|_{r=0}}{n+1}=\frac{\partial_r|_{r=0}(H^Y)-\hat{\eta}-\overline{B}_{00}}{n+1}.
\end{equation}
By applying the conformal transformation law for \(B\) and using (\ref{conf inv3}), we obtain  
\[
-\left(\overline{B}_{rr}+\frac{\overline{N}(r)}{r}\right)^2+\frac{H^Y}{r}\left(\overline{B}_{rr}+\frac{\overline{N}(r)}{r}\right)=O(r),
\]
thus
\[
\left(\frac{H^Y}{r}-\overline{B}_{rr}-\frac{\overline{N}(r)}{r}\right)\left(\overline{B}_{rr}+\frac{\overline{N}(r)}{r}\right)=O(r).
\]
Taking the limit as \(r \to 0\), we deduce that either  
\[
\partial_{r}|_{r=0}(H^Y)=\overline{B}_{00}+\partial_{r}|_{r=0}(\overline{N}(r))
\]
or  
\[
\partial_{r}|_{r=0}(\overline{N}(r))=-\overline{B}_{00}.
\]
After using (\ref{dN}), we obtain in the former case 
\[
\partial_{r}|_{r=0}(H^Y)=n\overline{B}_{00}-\hat{\eta},
\]
while in the latter case,  
\[
\partial_{r}|_{r=0}(H^Y)=\hat{\eta}-n\overline{B}_{00}.
\]
In particular,  
\[
H^Y=\pm \bigg(\hat{\eta}-n\overline{B}_{00}\bigg)r+O(r^2),
\]
which implies \(H^Y=O(r^2)\) if and only if $\hat{\eta}-n\overline{B}_{00}=0.$

After expanding \(H^Y\) with respect to a different choice of special defining function, $\rho$ corresponding to \(\tilde{g}=e^{2w}\hat{g}\), and letting $\tilde{\eta}, \tilde{\overline{B}}_{00}$ be the the corresponding quantities for $\tilde{\overline{g}}=\rho^2g_+$, we obtain  
\begin{equation}\label{NewConfInv}
\hat{\eta}-n\overline{B}_{00}=e^{w}\big(\tilde{\eta}-n\tilde{\overline{B}}_{00}\big),
\end{equation}
proving conformal invariance. This concludes the proof.
\end{proof} 

We now turn our attention to \cref{WPE-Submanifolds}. Before proceeding with the proof, we need the following lemma:

\begin{lemma}\label{Weyl Lemma}
    Let $(M,g_+)$ be a PE space with $\Sigma=\partial M$. Let $r$ be a special defining function for $\Sigma$ and let $i,j,k,l$ be indices corresponding to the coordinates of a holographic coordinate system induced by $r$ which are different from $r.$ Then 
    \begin{equation}
\overline{W}^M_{ijkl}=W^{\Sigma}_{ijkl}+O_{ijkl}(r).
    \end{equation}
 By $W^{\Sigma}_{ijkl}$ we mean the Weyl tensor on $\Sigma$ corresponding to $(r^2g_+)|_{T^2\Sigma}.$  
\end{lemma}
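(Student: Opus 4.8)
The plan is to compare the all‑tangential components of the bulk Weyl tensor $\overline{W}^M_{ijkl}$ (of $\overline g$) with the intrinsic Weyl tensor $W^\Sigma_{ijkl}$ (of $\hat g$) at the boundary and show the two coincide there; since $\overline W^M_{ijkl}$ is smooth on the collar and $W^\Sigma_{ijkl}$ is $r$‑independent, equality at $r=0$ upgrades automatically to the claimed $O_{ijkl}(r)$ statement. Throughout I work in the dimensions relevant to the application, $\overline M^{n+2}$ with $\Sigma=\partial M$ of dimension $n+1\ge 4$ (so that $W^\Sigma$ is genuinely defined). The two structural inputs I will use are that a PE bulk forces $\Sigma$ to be totally geodesic at $r=0$, i.e. $\hat L=0$, and that its third conformal fundamental form vanishes, $\overline W_{0i0j}|_\Sigma=0$; both hold because PE $\Rightarrow$ WPE and hence follow from Theorem \ref{WPE-Characterization}(2) applied with $n\mapsto n+1$ (equivalently, reproducing the short cancellation argument in the proof of Theorem \ref{WPE-Characterization}(2)).

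First I would record the Weyl--Schouten decomposition on each manifold. On $\overline M^{n+2}$, taking all four indices tangential and using $\overline g_{ir}=0$ in holographic coordinates so that no mixed Schouten component enters,
\begin{equation*}
\overline W_{ijkl} = \overline R_{ijkl} - \overline P_{ik}\overline g_{jl} + \overline P_{il}\overline g_{jk} - \overline P_{jl}\overline g_{ik} + \overline P_{jk}\overline g_{il},
\end{equation*}
while on $\Sigma^{n+1}$, with $\hat P$ the Schouten tensor of $\hat g$,
\begin{equation*}
W^\Sigma_{ijkl} = \hat R_{ijkl} - \hat P_{ik}\hat g_{jl} + \hat P_{il}\hat g_{jk} - \hat P_{jl}\hat g_{ik} + \hat P_{jk}\hat g_{il}.
\end{equation*}
At $r=0$ one has $\overline g_{ij}=\hat g_{ij}$, and the Gauss equation $\overline R_{ijkl}=\hat R_{ijkl}+\hat L_{jk}\hat L_{il}-\hat L_{ik}\hat L_{jl}$ together with $\hat L=0$ gives $\overline R_{ijkl}|_\Sigma=\hat R_{ijkl}$. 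Subtracting the two decompositions therefore collapses the entire difference onto the tangential Schouten discrepancy $\overline P_{ij}-\hat P_{ij}$ evaluated on $\Sigma$.

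Next I would invoke the Fialkow--Gauss equation (\ref{Fialkow--GaussEq2}) for the totally geodesic hypersurface $\Sigma^{n+1}\subset\overline M^{n+2}$, exactly as in the proof of Proposition \ref{HMRImplyWPE} but with the ambient dimension shifted by one, which yields
\begin{equation*}
\overline P_{ij} = \hat P_{ij} + \frac{1}{n-1}\,\overline W_{0i0j} \qquad\text{on }\Sigma.
\end{equation*}
Since the PE (hence WPE) condition forces $\overline W_{0i0j}|_\Sigma=0$, this gives $\overline P_{ij}|_\Sigma=\hat P_{ij}$. Feeding $\overline R_{ijkl}|_\Sigma=\hat R_{ijkl}$ and $\overline P_{ij}|_\Sigma=\hat P_{ij}$ back into the subtracted decomposition, every term cancels and I conclude $\overline W_{ijkl}|_\Sigma=W^\Sigma_{ijkl}$. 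As $\overline W^M_{ijkl}-W^\Sigma_{ijkl}$ is smooth and vanishes at $r=0$, it equals $O_{ijkl}(r)$, which is the assertion.

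I expect the only delicate points to be bookkeeping rather than conceptual: pinning down the dimension‑dependent constant $\tfrac{1}{n-1}$ in the Fialkow--Gauss relation for a hypersurface of dimension $n+1$, and verifying that the mixed Schouten components $\overline P_{rj}$ never contribute to the all‑tangential Weyl component (they do not, precisely because $\overline g_{rj}=0$). The genuinely substantive input, the vanishing $\overline W_{0i0j}|_\Sigma=0$, is already delivered by the WPE characterization, so no fresh curvature analysis is required; the lemma is essentially a consequence of assembling Gauss, Fialkow--Gauss, and the vanishing third conformal fundamental form.
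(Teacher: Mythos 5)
Your proposal is correct, but it takes a genuinely different route from the paper's. The paper works with the singular metric: it uses the exact Einstein condition to write $W^{M}_{ijkl}=R^{M}_{ijkl}+(g_{ik}g_{jl}-g_{il}g_{jk})$ for $g_+$, then applies the conformal transformation law for the Riemann tensor under $g_+=r^{-2}\overline g$, inserts the PE expansion $\overline g_{ij}=\hat g_{ij}-\hat P_{ij}r^2+O(r^3)$ together with the totally geodesic boundary, and only at the last step recognizes the surviving combination $R^{\Sigma}_{ijkl}-\hat g_{ik}\hat P_{jl}+\hat g_{il}\hat P_{jk}-\hat P_{ik}\hat g_{jl}+\hat P_{il}\hat g_{jk}$ as $W^{\Sigma}_{ijkl}$. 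You instead work entirely at $r=0$ with the compactified metric: Weyl--Schouten decompositions on $\overline M$ and on $\Sigma$, the Gauss equation with $\hat L=0$ to match the Riemann tensors, and the Fialkow--Gauss relation $\overline P_{ij}=\hat P_{ij}+\tfrac{1}{n-1}\overline W_{0i0j}$ (your dimension-shifted constant is right, and the trace term drops since the Weyl tensor is trace-free) combined with the vanishing third conformal fundamental form to match the Schouten tensors. Your reliance on Theorem \ref{WPE-Characterization}(2) is legitimate and non-circular, since that theorem is proved in Section \ref{WPE+Ric-Manifolds} independently of this lemma. What each approach buys: the paper's computation is self-contained (it never needs the vanishing of $\overline W_{0i0j}$ as an input) and exhibits explicitly how the $O(r)$ error arises from the metric expansion; your argument is softer and shorter given the machinery already in the paper, and it actually proves a slightly stronger statement, namely that the conclusion holds for any WPE (not necessarily Einstein) bulk, since you only use $g_{(1)}=0$ and $\overline W_{0i0j}|_{\Sigma}=0$, both of which follow from WPE alone — whereas the paper's step $W^{M}_{ijkl}=R^{M}_{ijkl}+(g_{ik}g_{jl}-g_{il}g_{jk})$ genuinely needs the exact Einstein equation.
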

\begin{proof}

It is straightforward to check that the Einstein condition implies 
\begin{equation}\label{Weyl}
W_{ijkl}^{M}=R^{M}_{ijkl}+(g_{ik}g_{jl}-g_{il}g_{jk})
\end{equation}
(note we are using the shorthand $g_{ij}=(g_+)_{ij}$). The conformal transformation law for the Riemann tensor is given by 
\begin{equation*}
R^{M}_{ijkl}=r^{-2}\left[\overline{R}^M_{ijkl}+(\overline{g}_{il}\overline{T}_{jk}+\overline{g}_{jk}\overline{T}_{il}-\overline{g}_{ik}\overline{T}_{jl}-\overline{g}_{jl}\overline{T}_{ik})\right]
\end{equation*}
where
\begin{equation*}
    \overline{T}_{ij}=-\nabla^{\overline{g}}_i\nabla^{\overline{g}}_j \log r-\nabla^{\overline{g}}_i \log r\nabla^{\overline{g}}_j \log r+\frac{1}{2}|d\log r|^2
\overline{g}_{ij}. 
\end{equation*}
Next we compute the three terms in the right-hand-side of the formula above: 
\begin{itemize}
    \item $\nabla^{\overline{g}}_i \log r= \frac{r_i}{r}.$
    \item $\nabla^{\overline{g}}_i\nabla^{\overline{g}}_j \log r=\frac{r_{ij}}{r}-\frac{r_ir_j}{r^2}-\overline{\Gamma}_{ij}^k\frac{r_k}{r}-\overline{\Gamma}^r_{ij}\frac{1}{r}.$
    \item $|d \log r|^2_{\overline{g}}=\overline{g}^{ij}\frac{r_ir_j}{r^2}.$
\end{itemize}
Note that $r_i=0$ and $\overline{\Gamma}_{ij}^r=-\frac{\overline{g}_{ij}'}{2},$ which allows us to write
\begin{equation*}
\overline{T}_{ij}=\frac{\overline{g}_{ij}}{2r^2}-\frac{\overline{g}_{ij}'}{2r}.
\end{equation*}
Using this and the above formulas we then arrive at
\begin{align*}\label{Riemg_+}
R^{M}_{ijkl}&=r^{-2}\left[\overline{R}^M_{ijkl}+\frac{\overline{g}_{ik}\overline{g}_{jl}'-\overline{g}_{il}\overline{g}_{jk}'+\overline{g}_{ik}'\overline{g}_{jl}-\overline{g}_{il}'\overline{g}_{jk}}{2r}+\frac{\overline{g}_{jk}\overline{g}_{il}-\overline{g}_{ik}\overline{g}_{jl}}{r^2} \right] \\\nonumber
&= (g_{il}g_{jk}-g_{ik}g_{jl})+r^{-2}\left[\overline{R}^{M}_{ijkl}+\frac{\overline{g}_{ik}\overline{g}_{jl}'-\overline{g}_{il}\overline{g}_{jk}'+\overline{g}_{ik}'\overline{g}_{jl}-\overline{g}_{il}'\overline{g}_{jk}}{2r}\right].
\end{align*}
By \eqref{Weyl}, it now follows that 
\begin{equation*}\label{W}
W^{M}_{ijkl}=r^{-2}\left[\overline{R}^{M}_{ijkl}+\frac{\overline{g}_{ik}\overline{g}_{jl}'-\overline{g}_{il}\overline{g}_{jk}'+\overline{g}_{ik}'\overline{g}_{jl}-\overline{g}_{il}'\overline{g}_{jk}}{2r}\right].
\end{equation*}
Next recall that 
\begin{equation*}
\overline{g}_{ij}=\hat{g}_{ij}-\hat{P}_{ij}r^2+O(r^3),
\end{equation*}
After recalling that $\overline{W}^{M}_{ijkl}=r^2W^{M}_{ijkl},$ and the fact that $\Sigma$ is totally geodesic with respect to $\overline{g},$ we may write \cref{Weyl}
\begin{equation*}\label{W'}
\begin{split}
\overline{W}^{M}_{ijkl}&=\overline{R}^{M}_{ijkl}+\frac{\overline{g}_{ik}\overline{g}_{jl}'-\overline{g}_{il}\overline{g}_{jk}'+\overline{g}_{ik}'\overline{g}_{jl}-\overline{g}_{il}'\overline{g}_{jk}}{2r}\\
&=R^{\Sigma}_{ijkl}-\hat{g}_{ik}\hat{P}_{jl}+\hat{g}_{il}\hat{P}_{jk}-\hat{P}_{ik}\hat{g}_{jl}+\hat{P}_{il}\hat{g}_{jk}+O_{ijkl}(r)\\
&=W^{\Sigma}_{ijkl}+O_{ijkl}(r).
\end{split}
\end{equation*}

\end{proof}

Finally, we prove \cref{WPE-Submanifolds}:

\begin{proof}[Proof of Theorem \ref{WPE-Submanifolds}]
  By assumption $H^Y=O(r^2)$, thus $\overline{B}_{00}=\frac{\hat{\eta}}{n}$ by \cref{conf inv}. It follows that 
at points on $\partial Y$
\[
\frac{\overline{H}^Y}{n+1}=\frac{\hat{\eta}}{n}. 
\]
Therefore along $\partial Y$ it holds that
\[
\mathring{\overline{B}}_{ab}=\overline{B}_{ab}-\frac{\overline{H}^Y}{n+1}\hat{g}_{ab}=II_{ab}-\frac{\hat{\eta}}{n}\hat{g}_{ab}=\mathring{II}_{ab},
\]
where we have used Lemma \ref{2ffsEqual}.Now, recall the law $\mathring{B}_{\alpha\beta}=\dfrac{\mathring{\overline{B}}_{\alpha\beta}}{r}$ and observe that by $H^Y=O(r^2)$, we get $B_{\alpha\beta}=\mathring{B}_{\alpha\beta}+O(r^2)g_{\hat{\alpha}\hat{\beta}}$. Therefore,
\begin{equation}
\begin{split}
|B|^2_{h_+}=|\mathring{II}|^2r^2+O(r^4). 
\end{split}
\end{equation}
Moreover, by \cref{Weyl Lemma}, we may write $W^{M}_{\hat{\alpha}\overline{N}\hat{\beta}\overline{N}}=r^{-2}\overline{W}^M_{\hat{\alpha}\overline{N}\hat{\beta}\overline{N}}=r^{-2}(W^{\Sigma}_{a\overline{N}b\overline{N}}\delta_{\alpha}^{a}\delta_{\beta}^{b}+O_{\alpha\beta}(r))$. Furthermore, by equations (\ref{RicciSubmanifod}), (\ref{SCBABY}) we obtain
\[
\begin{split}
R^{Y}_{\alpha\beta}+nh_{\alpha\beta}&=-B^2_{\alpha\beta}-r^2W^{M}_{\hat{\alpha} \overline{N}\hat{\beta} \overline{N}} + O_{\alpha\beta}(r)=-\mathring{II}^2_{ab}\delta_{\alpha}^a \delta_{\beta}^{b}-W^{\Sigma}_{a \overline{N}b \overline{N}}\delta_{\alpha}^a \delta_{\beta}^{b}+O_{\alpha\beta}(r),\\
R^{Y}+n(n+1)&=-|\mathring{II}|^2r^2+O(r^4).
\end{split}
\]
The result now follows from \cref{WPE-Characterization}.
\end{proof}

\section{Appendix}\label{SubmanifoldGeometry}

\subsection{Ricci and Scalar curvature of asymptotically minimal hypersurfaces $Y^{n+1}$ in a PE space.}
Let $(M^{n+2},g_+)$ be a PE space. We will use Greek letters to denote tangent directions to $Y$, and we will use $N$ to denote the index corresponding to a coordinate function given by a choice of signed $g_+$-distance to $Y.$ 
We let $h_+$ be the induced metric on $Y.$ First note that since $g_+$ is PE we may write 
\begin{equation}
R^M_{\alpha\beta\gamma\delta}=-(g_{\alpha\gamma}g_{\beta\delta}-g_{\alpha\delta}g_{\beta\gamma})+W^M_{\alpha\beta\gamma\delta}.
\end{equation}
By Gauss' equation, we have 
    \[
    R^{M}_{\alpha\beta\gamma\delta}=R^Y_{\alpha\beta\gamma\delta}+B_{\alpha\delta}B_{\beta\gamma}-B_{\alpha\gamma}B_{\beta\delta},
    \]
    where $B$ is the (scalar) second fundamental form of $Y^{n+1}$.
    Tracing the above equation with $g$ gives
    \[
    g^{\beta\delta}R^M_{\alpha\beta\gamma\delta} = R^Y_{\alpha\gamma} + B^2_{\alpha\gamma}-HB_{\alpha\gamma}.
    \]
    On the other hand,
    \[
    g^{\beta\delta}R^M_{\alpha\beta\gamma\delta} = R^{M}_{\alpha\gamma} - R^{M}_{\alpha N\gamma N} = -ng_{\alpha\gamma}-W^{M}_{\alpha N \gamma N},
    \]
    thus
    \begin{equation}\label{RicciSubmanifod2}
    R_{\alpha\gamma}^Y+ng_{\alpha\gamma}=-B^2_{\alpha\gamma}+HB_{\alpha\gamma}-W^{M}_{\alpha N\gamma N}.
    \end{equation}
After taking another trace, we see
\begin{equation}\label{SCBABY2}
    R^Y+n(n+1)=H^2-|B|^2_{h_+}
\end{equation}
where $h_+$ is the induced singular metric on $Y.$

\subsection{Fialkow--Gauss Equation}

We work with a submanifold $Y^{k+1}$ of a manifold $(M^{n+1},g)$, where $k\ge 2$. Throughout this section, $B$ and $H$ will denote the second fundamental form and the mean curvature vector of $(Y^{k+1}, h)$, respectively. We will use $m=n-k$ to denote the codimension, and we will use $\eta',\mu',\cdots$ to denote normal directions and $i,j,\cdots$ to denote tangential directions in an adapted orthonormal frame\footnote{The letter ``k" will appear in two ways, as the dimension of the boundary of the submanifold and as an index representing a tangential direction.}. Our goal is to find a relation between $P^M$ and $P^Y$, that is, a relation between the extrinsic and intrinsic Schouten tensors.

We start with the Gauss equation and write
\begin{equation}\label{GaussEquation-1}
R^M_{ijkl}=R^Y_{ijkl}+\sum_{\eta'=1}^m( B_{il}^{\eta'} B_{jk}^{\eta'}- B_{ik}^{\eta'} B_{jl}^{\eta'}).
\end{equation}
Tracing the equation with $h^{jl}$ and rearranging gives
\begin{equation}\label{GaussEquationTraced1}
R^Y_{ik} =  R^M_{ik} -  g^{\mu'\nu'}R^M_{i\mu'k\nu'}-\sum_{\eta'=1}^m ( B^{\eta'})^2_{ik} + \sum_{\eta'=1}^m B^{\eta'}_{ik} H^{\eta'}
\end{equation}
Equation \ref{GaussEquationTraced1} gives an expression for the Ricci curvature of $Y$ in terms of extrinsic geometry. Tracing (\ref{GaussEquationTraced1})  with $h^{ik}$ gives an expression for the scalar curvature of the submanifold $Y$:
\begin{equation}\label{GaussEquationTraced2}
\begin{split}
    R^Y &= R^M - \tensor{{R^g_+}}{_{\mu'}^{\mu'}} - h^{ik}g^{\mu'\nu'}R^M_{i\mu' k\nu'} - \sum_{\eta'=1}^m |B^{\eta'}|^2 + \sum_{\eta'=1}^m (H^{\eta'})^2 \\ &= R^M - \tensor{{R^g_+}}{_{\mu'}^{\mu'}} - h^{ik} g^{\mu'\nu'}R^M_{i\mu'k\nu'} - |B|_{g}^2 + |H|_{g}^2
\end{split}
\end{equation}
On the other hand, the Riemann curvature tensor can be decomposed into its Weyl and Schouten parts as follows
\begin{equation}\label{CurvatureDecomposition}
    R^M_{i\mu'k\nu'} = W^M_{i\mu'k\nu'} + P^M_{ik} g_{\mu'\nu'} + P^M_{\mu'\nu'}h_{ik},
\end{equation}
where the extra two terms vanish as it involves the inner product of tangential with normal directions. Tracing in (\ref{CurvatureDecomposition}) with $g^{\mu'\nu'}$ yields
\begin{equation}
\begin{split}
    g^{\mu'\nu'}R^M_{i\mu'k\nu'} & = \tensor{{W^M}}{_i_{\mu'}_k^{\mu'}} + (n-k)P^M_{ik} +  g^{\mu'\nu'}P^M_{\mu'\nu'}h_{ik} \\ &= \tensor{{W^M}}{_i_{\mu'}_k^{\mu'}} + \frac{n-k}{n-1}R^M_{ik} - \frac{n-k}{n(n-1)}R^Mh_{ik} +\frac{1}{n-1}\tensor{{R^M}}{_{\mu'}^{\mu'}}h_{ik},
\end{split}
\end{equation}
and
\begin{equation}
\begin{split}
    h^{ik} g^{\mu'\nu'}R^M_{i\mu'k\nu'} &= \tensor{{W^M}}{_a_{\mu'}^a^{\mu'}} +\frac{n-k}{n-1}(R^M - \tensor{{R^M}}{_{\mu'}^{\mu'}}) - \frac{(k+1)(n-k)}{n(n-1)}R^M + \frac{k+1}{n-1}\tensor{{R^M}}{_{\mu'}^{\mu'}} \\&= \tensor{{W^M}}{_a_{\mu'}^a^{\mu'}}+ \frac{(n-k)(n-k-1)}{n(n-1)}R^M + \frac{2k-n+1}{n-1}\tensor{{R^M}}{_{\mu'}^{\mu'}}
\end{split}
\end{equation}

We are now in position to study the Schouten tensors. Using (\ref{GaussEquationTraced1}) and (\ref{GaussEquationTraced2}), we deduce
\[
\begin{split}
    P^Y_{ik} =&\; \frac{1}{k-1}R^Y_{ik} - \frac{R^Y}{2k(k-1)}h_{ik} \\ =&\; \frac{1}{k-1}\left(R^M_{ik} -  g^{\mu'\nu'}R^M_{i\mu'k\nu'}-\sum_{\eta'=1}^m ( B^{\eta'})^2_{ik} + \sum_{\eta'=1}^m  B^{\eta'}_{ik} H^{\eta'}\right) \\& - \frac{h_{ik}}{2k(k-1)}\left(R^M - \tensor{{R^M}}{_{\mu'}^{\mu'}} - h^{ab} g^{\mu'\nu'}R^M_{a\mu'b\nu'} - |B|^2 + |H|^2\right) \\ =&\;\frac{1}{k-1}\left( \sum_{\eta'=1}^m  B^{\eta'}_{ik} H^{\eta'} - \sum_{\eta'=1}^m ( B^{\eta'})^2_{ik} \right) + \frac{1}{k-1}R^M_{ik} \\&-\frac{1}{k-1}\left(\tensor{{W^M}}{_i_{\mu'}_k^{\mu'}} + \frac{n-k}{n-1}R^M_{ik} - \frac{n-k}{n(n-1)}R^Mh_{ik} +\frac{1}{n-1}\tensor{{R^M}}{_{\mu'}^{\mu'}}h_{ik}\right) \\ &-\frac{h_{ik}}{2k(k-1)}\left(|H|^2 - |B|^2\right) - \frac{h_{ik}}{2k(k-1)} R^M + \frac{h_{ik}}{2k(k-1)}\tensor{{R^M}}{_{\mu'}^{\mu'}} \\ &+\frac{h_{ik}}{2k(k-1)}\left(\tensor{{W^M}}{_{a}_{\mu'}^{a}^{\mu'}}+ \frac{(n-k)(n-k-1)}{n(n-1)}R^M + \frac{2k-n+1}{n-1}\tensor{{R^M}}{_{\mu'}^{\mu'}}\right)
\end{split}
\]
Therefore,
\[
\begin{split}
    P^Y_{ik} =& \frac{1}{k-1}\left( \sum_{\eta'=1}^m  B^{\eta'}_{ik} H^{\eta'} - \sum_{\eta'=1}^m ( B^{\eta'})^2_{ik}-\tensor{{W^M}}{_i_{\mu'}_k^{\mu'}} \right)-\frac{h_{ik}}{2k(k-1)}\left(|H|^2 - |B|^2\right) \\& + \tensor{{R^M}}{_{\mu'}^{\mu'}}h_{ik}\left(\frac{1}{2k(k-1)} - \frac{1}{(n-1)(k-1)} + \frac{2k-n+1}{2(n-1)k(k-1)}\right) \\ &+ R^Mh_{ik}\left(\frac{n-k}{(k-1)n(n-1)} - \frac{1}{2k(k-1)} + \frac{(n-k)(n-k-1)}{2k(k-1)n(n-1)}\right) \\ &+ R^M_{ik}\left(\frac{1}{k-1}-\frac{n-k}{(k-1)(n-1)}\right) + \frac{h_{ik}}{2k(k-1)} \tensor{{W^M}}{_{a}_{\mu'}^{a}^{\mu'}} \\ =& \; \frac{1}{k-1}\left( \sum_{\eta'=1}^m  B^{\eta'}_{ik}H^{\eta'} - \sum_{\eta'=1}^m ( B^{\eta'})^2_{ik}-\tensor{{W^M}}{_i_{\mu'}_k^{\mu'}} \right)-\frac{h_{ik}}{2k(k-1)}\left(|H|^2 - |B|^2\right) \\& - R^M h_{ik} \frac{1}{2n(n-1)} + R^M_{ik}\frac{1}{n-1} +\frac{h_{ik}}{2k(k-1)} \tensor{{W^M}}{_{a}_{\mu'}^{a}^{\mu'}}.
\end{split}
\]
That is,
\begin{equation}\label{FialkowGauss1}
\begin{split}
    P^Y_{ik} =&\; P^M_{ik}+ \frac{1}{k-1}\left( \sum_{\eta'=1}^m B^{\eta'}_{ik} H^{\eta'} - \sum_{\eta'=1}^m ( B^{\eta'})^2_{ik}-\tensor{{W^M}}{_i_{\mu'}_k^{\mu'}} \right)-\frac{h_{ik}}{2k(k-1)}\left(|H|^2 - |B|^2\right) \\ & + \frac{h_{ik}}{2k(k-1)} \tensor{{W^M}}{_{a}_{\mu'}^{a}^{\mu'}}.
\end{split}
\end{equation}

Finally, let us rewrite everything in terms of the trace-free second fundamental form $\mathring{B}^{\eta'}_{ik} = B^{\eta'}_{ik} - \frac{1}{k+1}H^{\eta'}h_{ik}$. Notice that
\[
\begin{split}
|\mathring{B}|^2 &= \sum_{\eta'=1}^m  h^{ij} h^{kl}\left( B^{\eta'}_{ik} - \frac{1}{k+1}H^{\eta'}  h_{ik}\right)\left( B^{\eta'}_{jl} - \frac{1}{k+1}H^{\eta'}  h_{jl}\right) = |B|^2 + \frac{1}{k+1}|H|^2 - \frac{2}{k+1}|H|^2 \\&= |B|^2 - \frac{1}{k+1}|H|^2 = |B|^2 - |H|^2 + \frac{k}{k+1}|H|^2
\end{split}
\]
and 
\[
\begin{split}
(B^{\eta'})^2_{ik} &= h^{jl}B^{\eta'}_{il}B^{\eta'}_{jk} = h^{jl}\left(\mathring{B}^{\eta'}_{il}+\frac{1}{k+1}H^{\eta'}h_{il}\right)\left(\mathring{B}^{\eta'}_{jk}+\frac{1}{k+1}H^{\eta'}h_{jk}\right) \\ &= (\mathring{B}^{\eta'})^2_{ik}+ \frac{2}{k+1} H^{\eta'}\mathring{B}^{\eta'}_{ik} + \frac{1}{(k+1)^2}(H^{\eta'})^2h_{ik}
\end{split}
\]
Putting everything together and using (\ref{FialkowGauss1}) we obtain 
\[
\begin{split}
    P^Y_{ik} - P^M_{ik} =&\; \frac{1}{k-1}\sum_{\eta'=1}^m \left(\mathring{B}^{\eta'}_{ik}+\frac{1}{k+1}H^{\eta'}h_{ik}\right)H^{\eta'} - \frac{1}{k-1}\sum_{\eta'}^m\tensor{{W^M}}{_i_{\mu'}_k^{\mu'}} \\ &- \frac{1}{k-1}\sum_{\eta'=1}^m\left((\mathring{B}^{\eta'})^2_{ik} + \frac{2}{k+1}H^{\eta'}\mathring{B}^{\eta'}_{ik} + \frac{1}{(k+1)^2}(H^{\eta'})^2h_{ik}\right) \\&-\frac{h_{ik}}{2k(k-1)}\left(\frac{k}{k+1}|H|^2 - |\mathring{B}|^2\right)+  \frac{h_{ik}}{2k(k-1)} \tensor{{W^M}}{_{a}_{\mu'}^{a}^{\mu'}}
    \\=&\; \frac{1}{k-1}\left(\left(1-\frac{2}{k+1}\right)\sum_{\eta'=1}^m\mathring{B}^{\eta'}_{ik}H^{\eta'} + h_{ik}\left(\frac{1}{k+1} - \frac{1}{(k+1)^2}\right)|H|^2 - \sum_{\eta'=1}^m(\mathring{B}^{\eta'})^2_{ik}\right) \\ &-\frac{1}{k-1}\tensor{{W^M}}{_i_{\mu'}_k^{\mu'}} - \frac{h_{ik}}{2k(k-1)}\left(\frac{k}{k+1}|H|^2 - |\mathring{B}|^2\right) + \frac{h_{ik}}{2k(k-1)} \tensor{{W^M}}{_{a}_{\mu'}^{a}^{\mu'}} \\=&\; \frac{1}{k-1}\left(\frac{k-1}{k+1}\sum_{\eta'=1}^m\mathring{B}^{\eta'}_{ik}H^{\eta'} +h_{ik}\frac{k}{(k+1)^2}|H|^2 -\sum_{\eta'=1}^m (\mathring{B}^{\eta'})^2_{ik}\right)\\ &-\frac{1}{k-1}\tensor{{W^M}}{_i_{\mu'}_k^{\mu'}} - \frac{h_{ik}}{2(k-1)(k+1)}|H|^2 + \frac{h_{ik}}{2k(k-1)}|\mathring{B}|^2 + \frac{h_{ik}}{2k(k-1)} \tensor{{W^M}}{_{a}_{\mu'}^{a}^{\mu'}}
    \end{split}
    \]
  That is,   
    \begin{equation}\label{Fialkow--GaussEq2}
    \begin{split}
    P^Y_{ik} - P^M_{ik} =&\; \frac{1}{k+1}\sum_{\eta'=1}^m\mathring{B}^{\eta'}_{ik}H^{\eta'} + \frac{h_{ik}}{2(k+1)^2}|H|^2 - \frac{1}{k-1}\sum_{\eta'=1}^m(\mathring{B}^{\eta'})^2_{ik}\\ &-\frac{1}{k-1}\tensor{{W^M}}{_i_{\mu'}_k^{\mu'}}  + \frac{h_{ik}}{2k(k-1)}|\mathring{B}|^2 + \frac{h_{ik}}{2k(k-1)} \tensor{{W^M}}{_{a}_{\mu'}^{a}^{\mu'}}.
\end{split}
\end{equation}
As in \cite{BlitzSamuel2021CFFa} (see (3.5)), we shall call relation (\ref{FialkowGauss1}) and (\ref{Fialkow--GaussEq2}) the Fialkow--Gauss equations.

\bibliographystyle{amsplain}
\bibliography{bibliography.bib}

@phdthesis{mazzeo1986hodge,
  title={Hodge cohomology of negatively curved manifolds},
  author={Mazzeo, Rafe Roys},
  year={1986},
  school={Massachusetts Institute of Technology}
}

@article {do2012rotation,
    AUTHOR = {do Carmo, M. and Dajczer, M.},
     TITLE = {Rotation hypersurfaces in spaces of constant curvature},
   JOURNAL = {Trans. Amer. Math. Soc.},
  FJOURNAL = {Transactions of the American Mathematical Society},
    VOLUME = {277},
      YEAR = {1983},
    NUMBER = {2},
     PAGES = {685--709},
      ISSN = {0002-9947},
   MRCLASS = {53C40 (53C42)},
  MRNUMBER = {694383},
MRREVIEWER = {C. S. Houh},
       DOI = {10.2307/1999231},
       URL = {https://doi.org/10.2307/1999231},
}

@article{PalmasOscar1999Crhw,
author = {Palmas, Oscar},
issn = {0100-3569},
journal = {Boletim da Sociedade Brasileira de Matemática},
language = {eng},
number = {2},
pages = {139-161},
title = {Complete rotation hypersurfaces with Hk constant in space forms},
volume = {30},
year = {1999},
}

@article{wang2018simons,
  title={Simons’ equation and minimal hypersurfaces in space forms},
  author={Wang, Biao},
  journal={Proceedings of the American Mathematical Society},
  volume={146},
  number={1},
  pages={369--383},
  year={2018}
}

@inProceedings{Graham,
abstract = {Let $X$ be the interior of a compact manifold $\overline\{X\}$ of dimension $n+1$ with boundary $M=\partial X$, and $g_+$ be a conformally compact metric on $X$, namely $\overline\{g\}\equiv r^2g_+$ extends continuously (or with some degree of smoothness) as a metric to $X$, where $r$ denotes a defining function for $M$, i.e. $r>0$ on $X$ and $r=0$, $dr\ne 0$ on $M$. The restrction of $\overline\{g\}$ to $TM$ rescales upon changing $r$, so defines invariantly a conformal class of metrics on $M$, which is called the conformal infinity of $g_+$. In the present paper, the author considers conformally compact metrics satisfying the Einstein condition Ric$(g_+)=-ng_+$, which are called conformally compact Einstein metrics on $X$, and their extensions to $X$ together with the restrictions of $\overline\{g\}$ to the boundary $M=\partial X$. First, the author notes that a representative metric $g$ on $M$ for the conformal infinity of a conformally compact Einstein metric},
author = {Graham, C. Robin},
booktitle = {Proceedings of the 19th Winter School "Geometry and Physics"},
keywords = {Proceedings; Winter school; Geometry; Physics; Srní (Czech Republic)},
location = {Palermo},
pages = {31-42},
publisher = {Circolo Matematico di Palermo},
title = {Volume and area renormalizations for conformally compact Einstein metrics},
url = {http://eudml.org/doc/221675},
year = {2000},
}

@article{MazzeoRafe1991UCaI,
author = {Mazzeo, Rafe},
keywords = {Continuous spectra ; Coordinate systems ; Curvature ; Eigenvalues ; Infinity ; Laplacians ; Mathematical theorems ; Riemann manifold ; Spectral theory ; Vector fields},
issn = {0002-9327},
journal = {American journal of mathematics},
pages = {25--45},
volume = {113},
publisher = {Johns Hopkins University Press},
number = {1},
year = {1991},
title = {Unique Continuation at Infinity and Embedded Eigenvalues for Asymptotically Hyperbolic Manifolds},
copyright = {Copyright 1991 The Johns Hopkins University Press},
language = {eng},
}

@article{LeeJohnM.1995Tsoa,
author = {Lee, John M.},
issn = {1019-8385},
journal = {Communications in analysis and geometry},
pages = {253--271},
volume = {3},
number = {2},
year = {1995},
title = {The spectrum of an asymptotically hyperbolic Einstein manifold},
language = {eng},
}

@article{HijaziOussama2020TCCo,
issn = {0010-3616},
journal = {Communications in mathematical physics},
pages = {873--890},
volume = {374},
publisher = {Springer Berlin Heidelberg},
number = {2},
year = {2020},
title = {The Cheeger Constant of an Asymptotically Locally Hyperbolic Manifold and the Yamabe Type of Its Conformal Infinity},
copyright = {Springer-Verlag GmbH Germany, part of Springer Nature 2019},
language = {eng},
address = {Berlin/Heidelberg},
author = {Hijazi, Oussama and Montiel, Sebastián and Raulot, Simon},
keywords = {Asymptotic properties ; Classical and Quantum Gravitation ; Complex Systems ; Curvature ; Differential Geometry ; Infinity ; Mathematical and Computational Physics ; Mathematical Physics ; Mathematics ; Physics ; Physics and Astronomy ; Quantum Physics ; Relativity Theory ; Theoretical},
}

@article{GrahamLee,
issn = {0001-8708},
journal = {Advances in mathematics (New York. 1965)},
pages = {186--225},
volume = {87},
publisher = {Elsevier Inc},
number = {2},
year = {1991},
title = {Einstein metrics with prescribed conformal infinity on the ball},
copyright = {1991},
language = {eng},
address = {San Diego, CA},
author = {Graham, C.Robin and Lee, John M},
keywords = {Ball (mathematics) ; Conformal map ; Diffeomorphism ; Einstein ; Exact sciences and technology ; Manifolds and cell complexes ; Mathematical analysis ; Mathematics ; Sciences and techniques of general use ; Topology. Manifolds and cell complexes. Global analysis and analysis on manifolds},
}

@article{GuillarmouColin2010SCoP,
journal = {International mathematics research notices},
keywords = {Geometry, Differential ; Mathematics},
language = {eng},
number = {9},
pages = {1720-1740},
publisher = {Oxford University Press},
title = {Spectral Characterization of Poincaré–Einstein Manifolds with Infinity of Positive Yamabe Type},
volume = {2010},
year = {2010},
abstract = {In this paper, we give a sharp spectral characterization of conformally compact Einstein manifolds with conformal infinity of positive Yamabe type in dimension n + 1 > 2. More precisely, we prove that the largest real scattering pole of a conformally compact Einstein manifold (X, g) is less than n/2 − 1 if and only if the conformal infinity of (X, g) is of positive Yamabe type. If this positivity is satisfied, we also show that the Green function of the fractional conformal Laplacian P(α) on the conformal infinity is nonnegative for all α ∈ [0,2].},
author = {Guillarmou, Colin and Qing, Jie},
copyright = {Distributed under a Creative Commons Attribution 4.0 International License},
issn = {1073-7928},
}

@article{GrahamCRobin2019Cffs,
abstract = {We derive a formula of Chern-Gauss-Bonnet type for the Euler characteristic of a four-dimensional manifold-with-boundary in terms of the geometry of the Loewner-Nirenberg singular Yamabe metric in a prescribed conformal class. The formula involves the renormalized volume and a boundary integral. It is shown that if the boundary is umbilic, then the sum of the renormalized volume and the boundary integral is a conformal invariant. Analogous results are proved for asymptotically hyperbolic metrics in dimension four for which the second elementary symmetric function of the eigenvalues of the Schouten tensor is constant. Extensions and generalizations of these results are discussed. Finally, a general result is proved identifying the infinitesimal anomaly of the renormalized volume of an asymptotically hyperbolic metric in terms of its renormalized volume coefficients, and used to outline alternate proofs of the conformal invariance of the renormalized volume plus a boundary integral.},
author = {Graham, C. Robin and Gursky, Matthew J.},
copyright = {Indiana University Mathematics Journal},
issn = {0022-2518},
journal = {Indiana University mathematics journal},
language = {eng},
number = {3},
pages = {1131-1166},
publisher = {Indiana University Mathematics Department},
title = {Chern-Gauss-Bonnet Formula for Singular Yamabe Metrics in Dimension Four},
volume = {70},
year = {2021},
}

@misc{pérezayala-tyrrell,
      title={First Eigenvalue Estimates for Asymptotically Hyperbolic Manifolds and their Submanifolds}, 
      author={Samuel Pérez-Ayala and Aaron J. Tyrrell},
      year={2024},
      eprint={2404.07365},
      archivePrefix={arXiv},
      primaryClass={math.DG},
      url={https://arxiv.org/abs/2404.07365}, 
}

@article{Takeuchi,
author = {Hiroshi Takeuchi},
title = {{On the First Eigenvalue of the $p$-Laplacian in a Riemannian Manifold}},
volume = {21},
journal = {Tokyo Journal of Mathematics},
number = {1},
publisher = {Publication Committee for the Tokyo Journal of Mathematics},
pages = {135 -- 140},
year = {1998},
doi = {10.3836/tjm/1270041991},
URL = {https://doi.org/10.3836/tjm/1270041991}
}

@article{CoskunuzerBaris2009APP,
abstract = {Proc. Gokova Geom. Top. Conf. (2013) 120-146 This is a survey of old and recent results about the asymptotic Plateau
problem. Our aim is to give a fairly complete picture of the field, and present
the current situation.},
author = {Coskunuzer, Baris},
year = {2009},
title = {Asymptotic Plateau Problem},
language = {eng},
copyright = {http://arxiv.org/licenses/nonexclusive-distrib/1.0},
}

@article{AndersonMichaelT.1982Cmvi,
author = {Anderson, Michael T.},
volume = {69},
year = {1982},
title = {Complete minimal varieties in hyperbolic space},
journal = {Inventiones mathematicae},
language = {eng},
number = {3},
pages = {477-494},
issn = {0020-9910},
}

@article{BrendleSimon2013Msi:,
abstract = {In this survey, we discuss various aspects of the minimal surface equation in the three-sphere
. After recalling the basic definitions, we describe a family of immersed minimal tori with rotational symmetry. We then review the known examples of embedded minimal surfaces in
. Besides the equator and the Clifford torus, these include the Lawson and Kapouleas-Yang examples, as well as a new family of examples found recently by Choe and Soret. We next discuss uniqueness theorems for minimal surfaces in
, such as the work of Almgren on the genus
case, and our recent solution of Lawson’s conjecture for embedded minimal surfaces of genus
. More generally, we show that any minimal surface of genus
which is Alexandrov immersed must be rotationally symmetric. We also discuss Urbano’s estimate for the Morse index of an embedded minimal surface and give an outline of the recent proof of the Willmore conjecture by Marques and Neves. Finally, we describe estimates for the first eigenvalue of the Laplacian on a minimal surface.},
author = {Brendle, Simon},
address = {Cham},
volume = {3},
year = {2013},
title = {Minimal surfaces in $\mathbb{S}^3$: a survey of recent results},
journal = {Bulletin of mathematical sciences},
keywords = {Mathematics},
language = {eng},
number = {1},
pages = {133-171},
publisher = {Springer International Publishing},
copyright = {The Author(s) 2013},
issn = {1664-3607},
}

@article{LawsonH.Blaine1970CMSi,
author = {Lawson, H. Blaine},
volume = {92},
year = {1970},
title = {Complete Minimal Surfaces in S3},
journal = {Annals of mathematics},
keywords = {Algebra ; Curvature ; Minimal surfaces ; Polygons},
language = {eng},
number = {3},
pages = {335-374},
publisher = {Princeton University Press},
issn = {0003-486X},
}

@article{KarcherH.1988Nmsi,
author = {Karcher, H. and Pinkall, U. and Sterling, I.},
volume = {28},
year = {1988},
title = {New minimal surfaces in $S^3$},
journal = {Journal of differential geometry},
keywords = {},
language = {eng},
number = {2},
pages = {},
publisher = {},
issn = {0022-040X},
}

@book{Spivak,
author = {Spivak, Michael.},
address = {Houston, Tex},
year = {1999 - 2005},
title = {A comprehensive introduction to differential geometry },
keywords = {Geometry Differential},
language = {eng},
publisher = {Publish or Perish, Inc.},
booktitle = {A comprehensive introduction to differential geometry},
edition = {3rd ed. with corrections.},
isbn = {0914098705},
}

@article{BlitzSamuel2021CFFa,
abstract = {},
author = {Blitz, Samuel and Gover, A. Rod and Waldron, Andrew},
year = {2021},
title = {Conformal Fundamental Forms and the Asymptotically Poincar\'e--Einstein Condition},
language = {eng},
copyright = {http://creativecommons.org/licenses/by/4.0},
}

@article{ChangSun-YungA.2007SPiC,
abstract = {This is a survey paper of our current research on the theory of partial differential equations in conformal geometry. Our intention is to describe some of our current works in a rather brief and expository fashion. We are not giving a comprehensive survey on the subject and references cited here are not intended to be complete. We introduce a bubble tree structure to study the degeneration of a class of Yamabe metrics on Bach flat manifolds satisfying some global conformal bounds on compact manifolds of dimension 4. As applications, we establish a gap theorem, a finiteness theorem for diffeomorphism type for this class, and diameter bound of the sigma(2)-metrics in a class of conformal 4-manifolds. For conformally compact Einstein metrics we introduce an eigenfunction compactification. As a consequence we obtain some topological constraints in terms of renormalized volumes.},
author = {Chang, Sun-Yung A. and Qing, Jie and Yang, Paul},
address = {KYIV 4},
copyright = {Copyright National Academy of Sciences of Ukraine 2007},
issn = {1815-0659},
journal = {Symmetry, integrability and geometry, methods and applications},
keywords = {Physical sciences ; Physics},
language = {eng},
pages = {122-},
publisher = {Natl Acad Sci Ukraine, Inst Math},
title = {Some Progress in Conformal Geometry},
volume = {3},
year = {2007},
}

@article{GrahamC.Robin2017Vrfs,
author = {Graham, C. Robin},
address = {Providence, Rhode Island},
copyright = {Copyright 2016 American Mathematical Society},
issn = {0002-9939},
journal = {Proceedings of the American Mathematical Society},
language = {eng},
number = {4},
pages = {1781-1792},
publisher = {American Mathematical Society},
title = {Volume renormalization for singular Yamabe metrics},
volume = {145},
year = {2017},
}

@article{CoskunuzerBaris2005OtNo,
abstract = {J. Gokova Geom. Topol. 5 (2011) 1-19 We give a simple topological argument to show that the number of solutions of
the asymptotic Plateau problem in hyperbolic space is generically unique. In
particular, we show that the space of codimension-1 closed submanifolds of
sphere at infinity, which bounds a unique absolutely area minimizing
hypersurface in hyperbolic n-space, is dense in the space of all codimension-1
closed submanifolds at infinity. In dimension 3, we also prove that the set of
uniqueness curves in asymptotic sphere for area minimizing planes is generic in
the set of Jordan curves at infinity. We also give some nonuniqueness results
for dimension 3, too.},
author = {Coskunuzer, Baris},
copyright = {http://arxiv.org/licenses/nonexclusive-distrib/1.0},
language = {eng},
title = {On the Number of Solutions to Asymptotic Plateau Problem},
year = {2005},
}

@article{WittenYau,
title = "Connectedness of the boundary in the AdS/CFT correspondence",
abstract = "Let M be a complete Einstein manifold of negative curvature, and assume that (as in the AdS/CFT correspondence) it has a Penrose compactification with a conformal boundary N of positive scalar curvature. We show that under these conditions, Hn(M;Z) = 0 and in particular N must be connected. These results resolve some puzzles concerning the AdS/CFT correspondence.",
author = "Edward Witten and Yau, {S. T.}",
year = "1999",
month = nov,
language = "English (US)",
volume = "3",
pages = "1--19",
journal = "Advances in Theoretical and Mathematical Physics",
issn = "1095-0761",
publisher = "International Press, Inc.",
number = "6",
}

\end{document}